\newcommand{\cmark}{\ding{51}}%
\newcommand{\xmark}{\ding{55}}%
\definecolor{mydarkgreen}{RGB}{39,130,67}
\newcommand{\green}{\color{mydarkgreen}}
\definecolor{mydarkred}{RGB}{192,47,25}
\newcommand{\red}{\color{mydarkred}}
\newcommand{\compactify}{} 
\newtheorem{theorem}{Theorem}
\newtheorem{proposition}{Proposition}
\newtheorem{lemma}{Lemma}
\newtheorem{corollary}{Corollary}
\newtheorem{assumption}{Assumption}
\newenvironment{customthm}[1]
  {\innercustomthm}
  {\endinnercustomthm}
\newenvironment{customlem}[1]
  {\innercustomlem}
  {\endinnercustomlem}
\newenvironment{customcor}[1]
  {\innercustomcor}
  {\endinnercustomlem}
\DeclareMathOperator*{\argmin}{argmin}
\DeclareMathOperator*{\dom}{dom}
\newcommand{\prox}{\mathop{\mathrm{prox}}\nolimits}
\newcommand{\sumin}{\sum \limits_{i=1}^n}
\newcommand{\sumjm}{\sum_{j=1}^m}
\newcommand{\sumkn}{\sum_{k=1}^n}
\newcommand{\sumkm}{\sum_{k=1}^m}
\newcommand{\avein}{\frac{1}{n}\sum_{i=1}^n}
\newcommand{\avejm}{\frac{1}{m}\sum_{j=1}^m}
\newcommand{\avekn}{\frac{1}{n}\sum_{k=1}^n}
\newcommand{\avekm}{\frac{1}{m}\sum_{k=1}^m}
\newcommand{\proxR}{\prox_{\eta R}}
\newcommand{\proxj}{\prox_{\eta_j g_j}}
\newcommand{\EE}{\mathbb{E}}
\newcommand{\RR}{\mathbb{R}}
\newcommand{\cB}{{\cal B}}
\newcommand{\cL}{{\cal L}}
\newcommand{\cO}{{\cal O}}
\newcommand{\cN}{{\cal N}}
\newcommand{\eqdef}{\stackrel{\mathrm{def}}{=}}
\newcommand{\ind}{\chi} 
\newcommand{\cF}{{\cal F}}
\newcommand{\cM}{{\cal M}}
\newcommand{\cC}{{\cal C}}
\newcommand{\cX}{{\cal X}}
\newcommand{\cY}{{\cal Y}}
\newcommand{\mA}{{\bf A}}
\newcommand{\mB}{{\bf B}}
\newcommand{\mD}{{\bf D}}
\newcommand{\mI}{{\bf I}}
\newcommand{\mW}{{\bf W}}
\providecommand{\Range}[1]{\mbox{Range}\left( #1\right)}
\def\<#1,#2>{\left\langle #1,#2\right\rangle}
\author{Konstantin Mishchenko${}^{1, 2}$ \qquad Peter Richt\'{a}rik${}^{3}$ \\
	${}^1$ CNRS, École Normale Supérieure, Inria \\
	${}^2$ Work done when the author was a student at KAUST \\
	${}^3$ King Abdullah University of Science and Technology (KAUST), Thuwal, Saudi Arabia
}
\begin{document}

\title{A Stochastic Decoupling Method for Minimizing\\ the Sum of Smooth and Non-Smooth Functions}

\maketitle

\begin{abstract}
We consider the problem of minimizing the sum of three convex functions: i) a smooth function $f$ in the form of an expectation or a finite average, ii) a non-smooth function $g$ in the form of a finite average of proximable functions $g_j$, and iii) a proximable regularizer $R$. We design a variance-reduced method which is able to progressively learn the proximal operator of $g$ via the computation of the proximal operator of a single randomly selected function $g_j$ in each iteration only. Our method can provably and efficiently accommodate many strategies for the estimation of the gradient of $f$, including via standard and variance-reduced stochastic estimation, effectively decoupling the smooth part of the problem from the non-smooth part. We prove a number of iteration complexity results, including a general $\cO(\nicefrac{1}{t})$ rate, $\cO(\nicefrac{1}{t^2})$ rate in the case of strongly convex smooth $f$, and several linear rates in special cases, including accelerated linear rate. For example, our method achieves a linear rate for the problem of minimizing a strongly convex function $f$ subject to linear constraints under no assumption on the constraints beyond consistency. When combined with SGD or SAGA estimators for the gradient of $f$, this leads to a very efficient method for empirical risk minimization.  Our method generalizes several existing algorithms, including forward-backward splitting, Douglas-Rachford splitting, proximal SGD, proximal SAGA, SDCA, randomized Kaczmarz and Point-SAGA. However, our method leads to many new specific methods in special cases; for instance,  we obtain the first randomized variant of the Dykstra's method for projection onto the intersection of closed convex sets. 
\end{abstract}

\section{Introduction}

In this paper we address optimization problems of the form 
\begin{align}
 \compactify   \min_{x\in\RR^d} F(x) \eqdef f(x) + \frac{1}{m}\sum \limits_{j=1}^m g_j(x) + R(x), \label{eq:pb_general}
\end{align}
where $f\colon\RR^d \to \RR$ is a smooth convex function,
and $R, g_1,\dotsc, g_m\colon \RR^d \to \RR\cup \{+\infty\}$ are  proper closed convex functions, admitting efficiently computable proximal operators\footnote{The proximal operator of function $R$ is defined as
    $\prox_{\eta R}(x) \eqdef \argmin_{u\in \RR^d} \left\{R(u) + \frac{1}{2\eta}\|u - x\|^2 \right\}.$}.   We also assume throughout that $\dom F \eqdef \{x: F(x)<+\infty\} \neq \emptyset$ and, moreover, that the set of minimizers of \eqref{eq:pb_general}, $\cX^*$, is non-empty.
    
    The main focus of this work is on how the difficult non-smooth term \begin{equation}\label{eq:g_sum} \compactify g(x)\eqdef \frac{1}{m}\sum \limits_{j=1}^m g_j(x)\end{equation} should be treated in order to construct an efficient  algorithm for solving the problem.  We are specifically interested in the case when $m$ is very large, and when the proximal operators of  $g$ and $g+R$ are impossible or prohibitively difficult to evaluate. 
We thus need to rely on splitting approaches which make calls to proximal operators of functions $\{g_j\}$ and $R$ separately.

Existing methods for solving problem~\eqref{eq:pb_general} can efficiently handle the case $m=1$  only \cite{alacaoglu2017smooth}. There were a few attempts to design methods capable of handling the general $m$ case, such as~\cite{allen2017katyusha, pedregosa2019proximal, ryu2017proximal} and~\cite{defazio2016simple}. None of the existing methods  offer a linear rate for non-smooth problem except for random projection. In cases when sublinear rates  are established, the assumptions on the functions $g_j$ are very restrictive. For instance, the results in \cite{allen2017katyusha} are limited to Lipschitz continuous $g_j$ only,   and \cite{defazio2016simple} assumes $g_j$ to be strongly convex. This is very unfortunate because the majority of  problems appearing in popular data science and machine learning applications lack these properties. For instance, if we want to find a minimum of a smooth function over the intersection of $m$ convex sets, $g_j$ will be characteristic functions of sets, which are neither Lipschitz nor strongly convex. 

{\bf Applications.}  There is a long list of  applications of the non-smooth finite-sum problem \eqref{eq:pb_general}, including  convex feasibility~\cite{BauBor:96}, constrained optimization~\cite{Nocedal-Wright-book-2006}, decentralized optimization~\cite{Nedic2009distributed}, support vector machine~\cite{cortes1995support},  Dantzig selector~\cite{candes2007dantzig}, overlapping group Lasso~\cite{yuan2006model}, and Fused Lasso. In Appendix~\ref{ap:applications} we elaborate in detail how these problems  can be mapped to the general problem \eqref{eq:pb_general} (in particular, see Table~\ref{tbl:summary_apps}).

{\bf Variance reduction.} Stochastic variance-reduction methods are a major breakthrough of the last decade, whose success started with  the Stochastic Dual Coordinate Ascent (SDCA) method~\cite{shalev2013stochastic} and the invention of the Stochastic Average Gradient (SAG) method~\cite{sag}. Variance reduction has attracted enormous attention and now its reach covers strongly convex, convex and non-convex~\cite{lei2017non} stochastic problems. Despite being originally developed for finite-sum problems,  variance reduction  was shown to be applicable even to problems with $f$ expressed as a general expectation~\cite{lei2017less, nguyen2018inexact}. Further generalizations and extensions include variance reduction for minimax problems~\cite{palaniappan2016stochastic}, coordinate descent in the general $R$ case~\cite{hanzely2018sega}, and minimization with arbitrary sampling~\cite{gower2018stochastic}.  However, very little is known about variance reduction for non-smooth finite sum problems.

\section{Summary of Contributions}  

The departure point of our work is the observation that there is a class of non-smooth problems for which variance reduction is {\em not} required; these are the linear feasibility problems: given $\mA\in \RR^{m\times d}$ and $b\in \RR^m$, find $x\in \RR^d$ such that $\mA x=b$. Assuming the system is consistent, this problem can be cast as an instance of  \eqref{eq:pb_general}, with $R \equiv 0$ , $f(x)=\frac{1}{2}\|x\|^2$ and $g_j$ corresponding to the characteristic function of the $j$-th equation in the system. Efficient SGD methods (or equivalently, randomized projection methods)  with linear convergence rates were recently developed for this problem \cite{gower2015randomized, richtarik2017stochastic, tu2017breaking}, as well as accelerated variants~\cite{tu2017breaking,richtarik2017stochastic, gower2018accelerated} whose linear rate yields a quadratic improvement in the iteration complexity. However, it is {\em not} known whether these or similar linear rates could be obtained when one considers $f$ to be an arbitrary smooth and strongly convex function. While our work was originally motivated by the quest to answer this question, and  we answer in the affirmative, we were able to build a much more general theory, as we explain below.

We now summarize some of the most important contributions of our work:

{\bf First variance reduction for $g$.}  We propose a {\em  variance-reduction}  strategy for progressively approximating the proximal operator of the average of a large number of {\em non-smooth} functions $g_j$ via only evaluating the proximal operator of a single function $g_j$ in each iteration. That is, unlike existing approaches, we are able to treat the difficult  term \eqref{eq:g_sum} for any $m$. Combined with a gradient-type step in $f$ (we allow for multiple ways in which the gradient estimator is built; more on that below), and a proximal step for $R$, this leads to a {\em new and remarkably efficient method} (Algorithm~\ref{alg:sdm}) for solving problem~\eqref{eq:pb_general}. 

{\bf Compatibility with any gradient estimator for $f$.}
Our variance-reduction scheme for the non-smooth term $g$ is  {\em decoupled} from the way we choose to construct gradient estimators for $f$.  This allows us to use the most efficient and suitable estimators depending on the structure of $f$. In this regard, two cases are of particular importance: i) $f =\EE_\xi f_\xi $, where  $f_\xi \colon \RR^d\to \RR$ is almost surely convex and smooth, and ii) $f	= \frac{1}{n}\sum_i f_i$, where $\{f_i\}$ are convex and smooth.  In case i) one may consider the standard stochastic gradient estimator $\nabla f_{\xi^k}(x^k)$, or a mini-batch variant thereof,  and in case ii) one may consider the batch gradient $\nabla f(x^k)$ if $n$ is small, or a variance-reduced gradient estimator, such as SVRG~\cite{SVRG, L-SVRG} or SAGA~\cite{defazio2014saga, SAGA-AS}, if $n$ is large. {\em Our general analysis allows for any estimator to be used as long as it satisfies a certain technical assumption (Assumption~\ref{as:method}).} In particular, to illustrate the versatility of our approach, we show that this assumption holds for estimators used by Gradient Descent, SVRG, SAGA and over-parameterized SGD. We also claim without a proof that a variant of coordinate descent~\cite{hanzely2018sega} satisfies our assumption, but leave it for future work.

{\bf Future-proof design.} Our analysis is compatible with a wide array of other estimators of the gradient of $f$ beyond the specific ones listed above. Therefore, new specific variants of our generic method for solving problem \eqref{eq:pb_general} can be obtained in the future by marrying any such new  estimators with our variance-reduction strategy for the non-smooth finite sum term $g$.

{\bf Special cases.} Special cases of our method include randomized  Kaczmarz method~\cite{karczmarz1937angenaherte, RK}, Douglas-Rachford splitting \cite{DRsplitting}, forward-backward splitting~\cite{Nesterov_composite2013, FB2011}, a variant of SDCA~\cite{shalev2013stochastic},  and Point-SAGA~\cite{defazio2016simple}. Also, we obtain the first {\em randomized} variant of the famous Dykstra's algorithm~\cite{dykstra1983algorithm} for projection onto the intersection of convex sets. These special cases are summarized in Table~\ref{tab:special_cases}.

\begin{table}[t]
   \centering    
   \caption{Selected special cases of our method. For Dykstra's algorithm, $\cC_1, \dotsc, \cC_m$ are closed convex sets; and we  wish to find projection onto their intersection. Randomized Kaczmarz is a special case for linear constraints (i.e., $\cC_j = \{x: a_j^\top x = b_j\}$). We do not prove convergence under the same assumptions as Point-SAGA as they require strong convexity and smoothness of each $g_j$, but the algorithm is still a special case.}
    \footnotesize
    \begin{tabular}{cccccc}
        \toprule[.1em]
         $f$ & $g_j$ & $R$ & $\eta$ & Method & Comment  \\
         \midrule
         $f_1=f$, $n=1$ & 0 & $R$ & $< \nicefrac{2}{L}$ & Forward-Backward &  \cite{Nesterov_composite2013, FB2011}\\
                 \addlinespace 
         0 & $g_1=g$, $m=1$ & $R$ & any & Douglas-Rachford & \cite{DRsplitting}\\
           \addlinespace                  
         $\EE_\xi f_\xi $ & 0 & $R$ & $\le \nicefrac{1}{4L}$ & Proximal SGD & \cite{duchi2009efficient} \\
         \addlinespace
         $\nicefrac{1}{n}  \sum_i f_i$ & 0 & $R$ & $\le \nicefrac{1}{5L}$ & Proximal SAGA & \cite{defazio2014saga} \\
         \addlinespace
         $\nicefrac{1}{2}\|x - x^0\|^2$ & $g_j$ & 0 & $\eta=\nicefrac{1}{m}$ &  SDCA & \cite{shalev2013stochastic} \\
         \addlinespace
         $\nicefrac{1}{2}\|x - x^0\|^2$ & $\ind_{\cC_j}$ & 0 & $\eta=\nicefrac{1}{m}$ &  Randomized Dykstra's algorithm & NEW \\
         \addlinespace
         $\nicefrac{1}{2}\|x - x^0\|^2$ & $\ind_{\{x:a_j^\top x = b_j\}}$ & 0 & $\eta=\nicefrac{1}{m}$ &  Randomized Kaczmarz method & \cite{karczmarz1937angenaherte, RK} \\
         \addlinespace
         0 & $g_j$ & 0 & any & Point-SAGA & \cite{defazio2016simple} \\
         \addlinespace
         $f_1=f$, $n=1$ & $g_1=g$, $m=1$ & $R$ & $< \nicefrac{2}{L}$ &   Condat-V{\~u} algorithm~& \cite{vu2013splitting, condat2013primal} \\
         \bottomrule[.1em]
    \end{tabular}
    \label{tab:special_cases}
\end{table}

{\bf Sublinear rates.} 
We first prove convergence of the iterates to the solution set in a Bregman sense, without quantifying the rate (see Appendix~\ref{ap:almost_sure}). Next, we establish $\cO\left(\nicefrac{1}{t}\right)$ rate with constant stepsizes under no assumption on problem~\eqref{eq:pb_general} beyond the existence of a solution and a few technical assumptions (see Thm~\ref{th:1_over_t_rate}). The rate improves to $\cO\left(\nicefrac{1}{t^2}\right)$ once we assume strong convexity of $f$, and allow for carefully designed decreasing stepsizes (see Thm~\ref{th:1_t2_rate}).

{\bf Linear rate in the non-smooth case with favourable data.} Consider the special case of \eqref{eq:pb_general} with $f$ being strongly convex, $R\equiv 0$ and $g_j(x) = \phi_j(\mA_j^\top x)$, where $\phi_j:\RR^{d_j}\to \RR\cup \{+\infty\}$ are proper closed convex functions, and $\mA_j\in \RR^{d\times d_j}$ are given (data) matrices: \begin{align}
    \compactify \min_{x\in\RR^d} f(x) + \frac{1}{m} \sum \limits_{j=1}^m \phi_j(\mA_j^\top x). \label{eq:pb_linear}
\end{align}
If the smallest eigenvalue of $\mA^\top \mA$ is positive, i.e., $\lambda_{\min}(\mA^\top \mA)>0$, where $\mA = [\mA_1, \dots, \mA_m]\in \RR^{d\times \sum_j d_j}$, then our method converges linearly (see Thm~\ref{th:lin_conv_lin_model}; and note that this can only happen if $\sum_j d_j \leq d$). Moreover, picking $j$ with probability proportional to $\|\mA_j\|$ is optimal (Cor~\ref{cor:imp_sampl}).
In the special case when $\phi_j(y)=\ind_{\{x\;:\;\mA_j^\top x = b_j\}}(x)$ for some vectors $b_1 \in \RR^{d_1},\dots, b_m\in \RR^{d_1}$, i.e., if we are minimizing a strongly convex function under a linear constraint,
\[\min_{x\in \RR^d} \left\{f(x) \;:\; \mA^\top x = b \right\},\]
then the rate is linear even if $\mA^\top \mA$ is not positive definite\footnote{By $\ind_{\cC}(x)$ we denote the characteristic function of the set $\cC$, defined as follows: $\ind_{\cC}(x) = 0$ if $x\in \cC$ and $\ind_{\cC}(x) = +\infty$ if $x\notin \cC$}. The rate will depend on $\lambda_{\min}^+(\mA^\top \mA)$, i.e., the smallest positive eigenvalue (see Thm~\ref{th:lin_constr}).

{\bf Linear and accelerated rate in the smooth case.}
If $g_1, \dotsc, g_m$ are smooth functions, the rate is linear (see Thm~\ref{th:lin_conv_smooth}). If $m$ is big enough, then it is also {\em accelerated} (Cor~\ref{cor:acc_in_g}).  A summary of our iteration complexity results is provided in Table~\ref{tab:results}.

\begin{table}[t]
    \centering
    \caption{Summary of iteration complexity results that we proved. We assume by default that all functions are convex, but provide different rates based on whether $f$ is strongly convex (scvx) and whether $g_1, \dotsc, g_m$ are smooth functions, which is represented by the check marks.}
    \footnotesize
    \begin{tabular}{cccccc}
        \toprule[.1em]
         Problem & $f$ scvx & $g_j$ smooth & Method for $f$ & Rate & Theorem \\
         \midrule
         \multirow{2}{*}{$\EE f_\xi(x) + \frac{1}{m}\sum\limits_{j=1}^m g_j(x) + R(x)$}  & \centering {\large\red\xmark} & {\large\red\xmark} &
\multirow{3}{*}{SGD}     & \large{$\cO\left( \nicefrac{1}{\sqrt{t}}\right)$} & Cor.~\ref{cor:sgd}\\[0.5ex]
         \cline{2-3} \cline {5-6}
         & \centering {\large\green\cmark} & {\large\red\xmark} &   &{\large$\cO\left(\nicefrac{1}{t}\right)$}  & \ref{th:sgd_str_cvx} \\[0.5ex]
         \hline
         \multirow{4}{*}{$\frac{1}{n}\sum\limits_{i=1}^n f_i(x) + \frac{1}{m}\sum\limits_{j=1}^m g_j(x) + R(x)$}  & \centering {\large\red\xmark} & {\large\red\xmark} &   \multirow{6}{*}{\shortstack{GD,\\ SVRG\\ and\\ SAGA}}  
         & {\large$\cO\left( \nicefrac{1}{t} \right)$} & \ref{th:1_over_t_rate} \\[0.5ex]
         \cline{2-3} \cline {5-6}
         & \centering {\large\green\cmark} & {\large\red\xmark} &   & {\large$\cO\left( \nicefrac{1}{t^2} \right)$} & \ref{th:1_t2_rate} \\[0.5ex]
         \cline{2-3} \cline {5-6}
         & \centering {\large\green\cmark} & {\large\green\cmark} &   & Linear & \ref{th:lin_conv_smooth} \\[0.5ex]
         \cline{1-3} \cline {5-6}
         $\frac{1}{n}\sum\limits_{i=1}^n f_i(x) + \frac{1}{m}\sum\limits_{j=1}^m \phi_j(\mA_j^\top x)$ & \centering {\large\green\cmark} & {\large\red\xmark} & & Linear & \ref{th:lin_conv_lin_model}, \ref{th:lin_constr}\\[0.5ex]
         \bottomrule[.1em]
    \end{tabular}
    \label{tab:results}
\end{table}


{\bf Related work.} The problems that we consider recently received a lot of attention. However, we are the first to show linear convergence on non-smooth problems. $\cO\left(\nicefrac{1}{t}\right)$ convergence with stochastic variance reduction was obtained in~\cite{ryu2017proximal} and~\cite{pedregosa2019proximal}, although both works do not have $\cO\left(\nicefrac{1}{t^2}\right)$ rate as we do. On  the other hand, works such as~\cite{yurtsever2016stochastic,cevher2018stochastic} managed to prove $\cO\left(\nicefrac{1}{t^2}\right)$ convergence, but only with all functions from $f$ and $g$ used at every iteration. Stochastic $\cO\left(\nicefrac{1}{t^2}\right)$ for constrained minimization can be found in~\cite{mishchenko2018stochastic}. There is also a number of works that consider parallel~\cite{deng2017parallel} ($\cO\left(\nicefrac{1}{t}\right)$ rate) and stochastic~\cite{zheng2016fast, liu2017accelerated} variants of ADMM, which work with one non-smooth term composed with a linear transformation. To show linear convergence they require matrix in the transformation to be positive-definite. variance-reduced ADMM for compositions, which is an orthogonal direction to ours, was considered in~\cite{yu2017fast}. There is a method for non-smooth problems with $f\equiv 0$ and proximal operator preconditioning that was analyzed in detail in~\cite{chambolle2018stochastic}, we discuss the relation to it in Appendix~\ref{ap:spdhg}. Many methods were designed to work with non-smooth functions  in parallel only, and one can obtain more of them from three-operator splitting methods such as the Condat-V{\~u} algorithm~\cite{vu2013splitting, condat2013primal}. Several works obtained linear convergence for smooth $g$~\cite{du2018linear, palaniappan2016stochastic}. Coordinate descent methods for two non-smooth functions were considered in~\cite{alacaoglu2017smooth}.

\section{Preliminaries}
{\bf Convexity and smoothness.}  A  differentiable function $f:\RR^d\to \RR$ is called {\em $\mu$-strongly convex} if 
$  f(x)       \ge f(y) + \<\nabla  f(y), x - y> + \frac{\mu}{2}\|x - y\|^2$ for all  $x, y\in \RR^d$. It is called {\em convex}   if this holds with $\mu=0$.
A convex function $f:\RR^d\to \RR$ is called {\em $L$-smooth} if it is differentiable and satisfies $f(x) \le f(y) + \<\nabla f(y), x - y> + \frac{L}{2}\|x - y\|^2$ for all $ x, y\in \RR^d.$ 

{\bf Bregman divergence.} To simplify the notation and proofs, it is convenient to work with Bregman divergences. The Bregman divergence associated with a differentiable convex function $f$ is the function
$D_f(x, y) \eqdef f(x) - f(y) - \<\nabla f(x), x - y>. $
It is important to note that the {\em Bregman divergence} of a convex function is always non-negative and is a (non-symmetric) notion of ``distance'' between $x$ and $y$. For $x^*\in \cX^*$, the quantity $D_f(x, x^*)$ serves as a generalization of the functional gap $f(x) - f(x^*)$ in cases when  $\nabla f(x^*)\neq 0$.

Useful inequalities related to convexity, strong convexity and smoothness are summarized in Appendix~\ref{sec:basic_facts}. We will make the following assumption related to optimality conditions.
\begin{assumption}\label{as:optimality}
    There exists $x^*\in \cX^*$ and vectors $y_1^*\in\partial g_1(x^*), \dots, y_m^*\in \partial g_m(x^*)$ and $r^*\in \partial R(x^*)$ such that
$
        \nabla f(x^*) + \avejm y_j^* + r^* = 0.
$
\end{assumption}
Throughout the paper, we will assume that some $x^*$ and $y_1^*, \dotsc, y_m^*$ satisfying Assumption~\ref{as:optimality} are fixed and all statements relate to these objects. We will denote $y^*\eqdef \frac{1}{m}\sumjm y_j^*$. 
A commentary and further details related to this assumption can be found in Appendix~\ref{sec:Opt_Cond}.

\section{The Algorithm}

\begin{algorithm}[t]
   \caption{Stochastic Decoupling Method (SDM).}
   \label{alg:sdm}
\begin{algorithmic}[1]
   \Require Stepsize $\eta$, initial vectors $x^0$, $y_1^0, \dotsc, y_m^0\in \RR^d$, probabilities $p_1,\dotsc, p_m$, oracle that gives gradient estimates
   \For{$t=0,1,\dotsc$}
	   \State Produce an estimate $v^t$ of $\nabla f(x^t)$, e.g., $v^t=\nabla f(x^t)$ 
	   \State $y^t = \avekm y_k^t$
	   \State $z^{t} = \proxR(x^t - \eta v^t - \eta y^t)$
	   \State Sample $j$ from $\{1,\dotsc, m\}$ with probabilities $\{p_1, \dotsc, p_m\}$ and set $\eta_j = \frac{\eta}{mp_j}$
	   \State $x^{t+1} = \proxj(z^t + \eta_j y_j^t)$
	   \State $y_j^{t+1} = y_j^t + \frac{1}{\eta_j}(z^t - x^{t+1})$ \Comment{$y_j^{t+1}\in \partial g_j(x^{t+1})$}
   \EndFor
\end{algorithmic}
\end{algorithm}

Our method  is very general and can work with different types of gradient update. One only needs to have for each $x^t$ an estimate of the gradient $v^t$ such that $\EE v^t = \nabla f(x^t)$ plus an additional assumption about its variance. We also maintain an estimate $y^t $ of full proximal step with respect to $g$, which allows us to make an intermediate step
$
    z^{t} 
    = \prox_{\eta R}(x^t - \eta v^t - \eta y^t).
$
The key idea of this work is then to combine it with variance reduction in the non-smooth part. In fact, it mimics variance-reduction step from~\cite{defazio2016simple}, which was motivated by the SAGA algorithm~\cite{defazio2014saga}. Essentially, the expression above for $z^t$ does not allow for update of $y^t$, so we do one more step,
\begin{align*}
	x^{t+1} =  \proxj(z^{t} + \eta_j y_j^t).
\end{align*}
This can additionally be rewritten using the identity $\prox_{\eta g}(x) \in x - \eta \partial g(\prox_{\eta g}(x))$ as
\begin{align*}
    x^{t+1} 
    &\in x^t - \eta (v^t + \partial R(z^t) + y^t) - \eta_j(\partial g_j(x^{t+1}) - y_j^t)  \approx \prox_{\eta (R+g)}(x^t - \eta \nabla f(x^t)).
\end{align*}
To make sure that the approximation works, we want to make $y_j^t$ be close to $\partial g_j(x^{t+1})$, which we do not know in advance. However, we do it in hindsight by updating $y_j^{t+1}$ with a particular subgradient from $\partial g_j(x^{t+1})$, namely 
$
	y_j^{t+1}
	 = \frac{1}{\eta_j}(z^t + \eta_j y_j^t - \proxj(z^t + \eta_j y_j^t))\in \partial g_j(x^{t+1}).
$

We also need to accurately estimate $\nabla f(x^t)$, and there several options for this. The simplest choice is setting $v^t = \nabla f(x^t)$. Often this is too expensive and one can instead construct $v^t$ using a variance-reduction technique, such as SAGA~\cite{defazio2014saga} (see Algorithm~\ref{alg:v_saga}). To a reader  familiar with Fenchel duality, it might be of some interest that there is an explanation of our ideas using the dual.\footnote{Indeed, problem~\eqref{eq:pb_general} can be recast into
$\min_x \max_{y_1,\dotsc, y_m} f(x) + R(x) + \frac{1}{m}\sum_{j=1}^m x^\top y_j - \frac{1}{m} \sumjm g_j^*\left(y_j\right),
$
where $g_j^*$ is the Fenchel conjugate of $g_j$. Then, the proximal gradient step in $x$ would be
$
	z 
	= \prox_{\eta R}\left(x - \eta\nabla f(x) - \eta \frac{1}{m} \sumjm y_j \right).
$
In contrast, our update in $y_j$ is a proximal block-coordinate ascent step, so the overall process is akin to proximal alternating gradient descent-ascent (see~\cite{bianchi2015coordinate, combettes2015stochastic} for related ideas). However, this is neither how we developed nor analyze the method, so this should not be seen as a formal explanation.} 

\section{Gradient Estimators}


Since we want to have analysis that puts many different methods under the same umbrella, we need an assumption that is easy to satisfy. In particular, the following will fit our needs.
\begin{assumption}\label{as:method}
	Let $w^t \eqdef x^t - \eta v^t$ and $w^* \eqdef x^* - \eta \nabla f(x^*)$. We assume that the oracle produces $v^t$ and (potentially) updates some other variables in such a way that for some constants $\eta_0>0$, $\omega > 0$ and non-negative sequence $\{\cM^t\}_{t=0}^{+\infty}$, such that the following holds for any $\eta \le \eta_0$:
	\begin{enumerate}[(a)]
		\item If $f$ is convex, then 
		$			\EE\|w^{t} - w^*\|^2 + \cM^{t+1}
			\le \EE\|x^t - x^*\|^2 - \omega\eta\EE D_f(x^t, x^*) + \cM^t.
		$
		\item If $f$ is $\mu$-strongly convex, then  either $\cM^t=0$ for all $t$ or there exists $\rho > 0$ such that
		\begin{align*}
			\EE\|w^{t} - w^*\|^2 + \cM^{t+1}
			\le (1 -\omega\eta\mu)\EE\|x^t - x^*\|^2 + (1 - \rho)\cM^t.
		\end{align*}
	\end{enumerate}
\end{assumption}
We note that we could easily make a slightly different assumption to allow for a strongly convex $R$, but this would be at the cost of analysis clarity. Since the assumption above is already quite general, we choose to stick to it and claim without a proof that in the analysis it is possible to transfer strong convexity from $f$ to $R$.

Another observation is that part~(a) of Assumption~\ref{as:method} implies its part~(b) with $\omega/2$. However, to achieve tight bounds for Gradient Descent we need to consider them separately.

\begin{lemma}[Proof in Appendix~\ref{ap:gd}]\label{lem:gd}
	If $f$ is convex, the Gradient Descent estimate, $v^t=\nabla f(x^t)$, satisfies Assumption~\ref{as:method}(a) with any $\eta_0 < \nicefrac{2}{L}$, $\omega = 2 - \eta_0 L$ and $\cM^t = 0$. If $f$ is $\mu$-strongly convex, Gradient Descent satisfies Assumption~\ref{as:method}(b) with $\eta_0 = \frac{2}{L + \mu}$, $\omega = 1$ and $\cM^t=0$.
\end{lemma}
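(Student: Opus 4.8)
The plan is to verify Assumption~\ref{as:method} directly from the standard descent inequalities for Gradient Descent, where $v^t = \nabla f(x^t)$ and hence $w^t = x^t - \eta \nabla f(x^t)$. Since $\cM^t = 0$ identically, both parts reduce to a single inequality controlling $\|w^t - w^*\|^2$.

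\textbf{Part (a), convex case.} First I would expand
\[
\|w^t - w^*\|^2 = \|x^t - x^* - \eta(\nabla f(x^t) - \nabla f(x^*))\|^2 = \|x^t - x^*\|^2 - 2\eta \<\nabla f(x^t) - \nabla f(x^*), x^t - x^*> + \eta^2\|\nabla f(x^t) - \nabla f(x^*)\|^2.
\]
The next step is to bound the inner-product and gradient-norm terms using co-coercivity of $\nabla f$ (see Appendix~\ref{sec:basic_facts}), namely $\|\nabla f(x^t) - \nabla f(x^*)\|^2 \le L\<\nabla f(x^t) - \nabla f(x^*), x^t - x^*>$, and the identity $\<\nabla f(x^t) - \nabla f(x^*), x^t - x^*> = D_f(x^t,x^*) + D_f(x^*,x^t) \ge D_f(x^t,x^*)$. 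Substituting gives
\[
\|w^t - w^*\|^2 \le \|x^t - x^*\|^2 - (2\eta - \eta^2 L)\<\nabla f(x^t) - \nabla f(x^*), x^t - x^*> \le \|x^t - x^*\|^2 - (2 - \eta L)\eta D_f(x^t, x^*),
\]
where the last step uses $2\eta - \eta^2 L \ge 0$ for $\eta \le \eta_0 < 2/L$ and monotonicity in $\eta$, so $2\eta - \eta^2 L \ge (2-\eta_0 L)\eta$. This is exactly the claimed bound with $\omega = 2 - \eta_0 L > 0$. (Since $v^t$ is deterministic here the expectation is vacuous.)

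\textbf{Part (b), strongly convex case.} Here I would use the classical contraction estimate for gradient descent with stepsize $\eta = \frac{2}{L+\mu}$: for $\mu$-strongly convex, $L$-smooth $f$ one has the joint bound $\<\nabla f(x^t) - \nabla f(x^*), x^t - x^*> \ge \frac{\mu L}{\mu + L}\|x^t - x^*\|^2 + \frac{1}{\mu+L}\|\nabla f(x^t) - \nabla f(x^*)\|^2$ (again in Appendix~\ref{sec:basic_facts}). Plugging this into the expansion of $\|w^t - w^*\|^2$ above with this specific $\eta$, the gradient-squared terms cancel after the substitution and one is left with $\|w^t - w^*\|^2 \le (1 - \eta\mu)^2\|x^t - x^*\|^2$; since $(1-\eta\mu)^2 \le 1 - \eta\mu$ for $\eta\mu \in [0,1]$, this yields $\|w^t - w^*\|^2 \le (1 - \eta\mu)\|x^t - x^*\|^2$, matching Assumption~\ref{as:method}(b) with $\omega = 1$ and $\cM^t = 0$ (the "$\cM^t = 0$ for all $t$" branch, so no $\rho$ is needed).

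I do not anticipate a genuine obstacle — both parts are textbook inequalities. The only mild subtlety is bookkeeping the dependence on $\eta_0$ versus $\eta$ in part (a): the inequality must hold for \emph{all} $\eta \le \eta_0$ with a \emph{fixed} $\omega$, so one must be careful to lower-bound $2\eta - \eta^2 L$ by $(2 - \eta_0 L)\eta$ rather than by $(2-\eta L)\eta$, using that $\eta \mapsto 2 - \eta L$ is decreasing. In part (b) the one thing to check is that $\eta_0 = \frac{2}{\mu+L} < \frac{2}{L}$, so this choice is consistent with the range implicitly needed, and that picking $\eta$ exactly equal to $\frac{2}{L+\mu}$ (rather than any $\eta \le \eta_0$) is what the lemma asserts for the strongly convex case.
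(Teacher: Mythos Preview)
Your approach matches the paper's almost exactly: both parts rely on the same expansions and the same inequalities from Appendix~\ref{sec:basic_facts} (co-coercivity \eqref{eq:grad_dif_scalar_prod} and the three-point bound \eqref{eq:scal_prod_cvx} for part~(a), the joint $\mu$--$L$ inequality \eqref{eq:scal_prod_tight_str_cvx} for part~(b)). Part~(a) is fine as written.

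There is, however, a small gap in part~(b). You assert that ``picking $\eta$ exactly equal to $\tfrac{2}{L+\mu}$ (rather than any $\eta \le \eta_0$) is what the lemma asserts,'' but that is a misreading: Assumption~\ref{as:method} explicitly requires the inequality to hold \emph{for all} $\eta \le \eta_0$, so you must establish $\|w^t - w^*\|^2 \le (1-\eta\mu)\|x^t - x^*\|^2$ over the whole range, not just at the endpoint. With your current argument the gradient-squared terms cancel only at $\eta = \eta_0$; for $\eta < \eta_0$ they do not. The paper closes this by keeping the (now negative) gradient-squared term and applying \eqref{eq:scal_prod_str_cvx} in the form $\|\nabla f(x^t) - \nabla f(x^*)\|^2 \ge \mu^2\|x^t - x^*\|^2$, which yields the clean factor $(1-\eta\mu)^2$ for every $\eta \le \tfrac{2}{L+\mu}$. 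An even shorter fix is to simply drop the negative gradient term and observe that $1 - \tfrac{2\eta\mu L}{L+\mu} \le 1 - \eta\mu$ whenever $L \ge \mu$. Either route completes your argument with one extra line.
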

Since $\cM^t=0$ for Gradient Descent, one can ignore $\rho$ in the convergence results or treat it as $+\infty$.

\begin{lemma}[Proof in Appendix~\ref{ap:svrg_saga}]\label{lem:svrg_saga}
	In SVRG and SAGA, if $f_i$ is $L$-smooth and convex for all $i$, Assumption~\ref{as:method}(a) is satisfied with $\eta_0 = \nicefrac{1}{6L}$, $\omega = \nicefrac{1}{3}$ and 
$	\cM^t 
		= \frac{3\eta^2}{n} \sum_i \EE\|\nabla f_i(u_i^t) - \nabla f_i(x^*)\|^2,
$
	where in SVRG $u_i^t=u^t$ is the reference point of the current loop, and in SAGA $u_i^t$ is the point whose gradient is stored in memory for function $f_i$. If $f$ is also strongly convex, then Assumption~\ref{as:method} holds with $\eta_0 = \nicefrac{1}{5L}$, $\omega = 1$, $\rho = \nicefrac{1}{3n}$ and the same $\cM^t$.
\end{lemma}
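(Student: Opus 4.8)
The plan is to prove both parts through a single chain of inequalities, tracking the surrogate point $w^t=x^t-\eta v^t$ and using the one property shared by the SVRG and SAGA estimators, namely unbiasedness $\EE[v^t\mid\cF^t]=\nabla f(x^t)$. Expanding the square and conditioning on the history $\cF^t$,
\[
\EE\|w^t-w^*\|^2=\|x^t-x^*\|^2-2\eta\<x^t-x^*,\nabla f(x^t)-\nabla f(x^*)>+\eta^2\EE\|v^t-\nabla f(x^*)\|^2 .
\]
I would rewrite the inner product as $D_f(x^t,x^*)+D_f(x^*,x^t)$, split the last term via unbiasedness as $\EE\|v^t-\nabla f(x^*)\|^2=\|\nabla f(x^t)-\nabla f(x^*)\|^2+\EE\|v^t-\nabla f(x^t)\|^2$, and then absorb $\eta^2\|\nabla f(x^t)-\nabla f(x^*)\|^2$ against $-2\eta D_f(x^*,x^t)$ using the smoothness inequality $D_f(x^*,x^t)\ge\frac{1}{2L}\|\nabla f(x^t)-\nabla f(x^*)\|^2$, which costs nothing once $\eta\le 1/L$. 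What then remains to control is the genuine estimator variance $\EE\|v^t-\nabla f(x^t)\|^2$ and the potential $\cM^t$.

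The variance bound is the heart of the lemma. In both methods $v^t-\nabla f(x^t)$ is a centred difference of gradients of one uniformly sampled $f_i$, evaluated at $x^t$ and at the stored point (at $u_i^t$ in SAGA, at the common reference point $u^t$ in SVRG). I would bound it by the second moment of that difference, split it into a part at $x^t$ and a part at the stored point, and invoke the smoothness-plus-convexity inequality $\|\nabla f_i(a)-\nabla f_i(x^*)\|^2\le 2L\,D_{f_i}(a,x^*)$; averaging over $i$ and using $\frac1n\sum_i D_{f_i}(x^t,x^*)=D_f(x^t,x^*)$ gives, for constants $c_1,c_2$ coming from the split,
\[
\EE\|v^t-\nabla f(x^t)\|^2\le c_1 L\,D_f(x^t,x^*)+c_2\,\frac1n\sum_i\EE\|\nabla f_i(u_i^t)-\nabla f_i(x^*)\|^2=c_1 L\,D_f(x^t,x^*)+\frac{c_2}{3\eta^2}\cM^t ,
\]
so after multiplying by $\eta^2$ the variance contributes $c_1 L\eta^2 D_f(x^t,x^*)+\frac{c_2}{3}\cM^t$. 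For the potential itself, the SAGA refresh rule (the sampled index's stored point becomes $x^t$ with probability $1/n$, the rest unchanged) and the SVRG rule (the reference point becomes $x^t$ with probability $\sim 1/n$) both yield, again via $\frac1n\sum_i\|\nabla f_i(x^t)-\nabla f_i(x^*)\|^2\le 2L\,D_f(x^t,x^*)$,
\[
\EE[\cM^{t+1}\mid\cF^t]\le\Bigl(1-\tfrac1n\Bigr)\cM^t+\tfrac{6L\eta^2}{n}\,D_f(x^t,x^*).
\]

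Adding the two bounds, $\EE\|w^t-w^*\|^2+\EE\cM^{t+1}\le\|x^t-x^*\|^2+\alpha(\eta)D_f(x^t,x^*)+\beta\cM^t$ with $\alpha(\eta)=-2\eta+\cO(L\eta^2)$ and $\beta$ an absolute constant coming from the variance split and the $\cM^t$ recursion. For part (a) I would then take $\eta_0$ (the claimed $1/(6L)$) small enough that $\alpha(\eta)\le-\tfrac13\eta$, which gives $\omega=\tfrac13$, while checking $\beta\le 1$; this last check is exactly what pins down the factor $3$ in the definition of $\cM^t$ and how sharp the variance split must be. For part (b), with $f$ being $\mu$-strongly convex I would additionally use $D_f(x^t,x^*)\ge\tfrac\mu2\|x^t-x^*\|^2$ to convert the negative $D_f(x^t,x^*)$ budget — larger here, since one may take $\eta_0=1/(5L)$ and $\omega=1$ — into a contraction factor $1-\omega\eta\mu$ on $\|x^t-x^*\|^2$, reserving a residual fraction of $D_f(x^t,x^*)$ so that, together with the $1-1/n$ already present, the coefficient of $\cM^t$ drops to $1-\rho$ with $\rho=\tfrac1{3n}$. (For Gradient Descent, $\cM^t\equiv0$, which is the degenerate case where $\rho$ is irrelevant; this is consistent with Lemma~\ref{lem:gd}.)

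I expect the main obstacle to be the constant bookkeeping rather than any single estimate: the variance split, the cocoercivity bounds and the refresh recursion each carry constants, and they must combine so that the coefficient of $\cM^t$ is $\le 1$ in the convex case and $\le 1-\tfrac1{3n}$ in the strongly convex case, while the coefficient of $D_f(x^t,x^*)$ still beats $-\omega\eta$ with the stated $\omega$. In the strongly convex case there is the extra tension that the single negative $D_f(x^t,x^*)$ term has to pay simultaneously for the $\mu$-rate on the iterates and the $\rho$-rate on the potential, and reconciling this is what fixes the admissible stepsize. A last, more routine point is to check that SVRG and SAGA pass through literally the same argument, the only difference being whether the stored points $\{u_i^t\}$ are distinct or all equal to a common $u^t$ — a difference that affects the constants in the $\cM^t$ recursion but not its form.
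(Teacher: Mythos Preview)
Your plan for part~(a) is correct and essentially the same as the paper's: after unbiasedness, the variance bound and the potential recursion combine to give
\[
\EE\|w^t-w^*\|^2+\cM^{t+1}\le \|x^t-x^*\|^2-2\eta(1-5L\eta)\,D_f(x^t,x^*)+\Bigl(1-\tfrac{1}{3n}\Bigr)\cM^t,
\]
so $\eta_0=1/(6L)$ yields $\omega=1/3$ and the $\cM^t$ coefficient is $\le 1$, exactly as stated.

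For part~(b), however, there is a gap in your reasoning. You propose to extract the strong-convexity contraction from the remaining $D_f(x^t,x^*)$ budget via $D_f(x^t,x^*)\ge\tfrac\mu2\|x^t-x^*\|^2$, claiming this budget is ``larger'' at $\eta_0=1/(5L)$. But the coefficient of $D_f(x^t,x^*)$ in the combined recursion is $-2\eta(1-5L\eta)$, which \emph{vanishes} at $\eta=1/(5L)$: there is no budget left. The source of the $\mu$ must be $D_f(x^*,x^t)$, not $D_f(x^t,x^*)$, and in your scheme $D_f(x^*,x^t)$ has already been spent absorbing $\eta^2\|\nabla f(x^t)-\nabla f(x^*)\|^2$. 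If you keep the residual after absorption you get at best $\omega=1-L\eta\le 4/5$; if instead you bound $\|\nabla f(x^t)-\nabla f(x^*)\|^2\le 2L D_f(x^t,x^*)$ and keep $D_f(x^*,x^t)$ intact for the $\mu$, you get $\omega=1$ but only at $\eta_0=1/(6L)$. Neither matches the claimed $(\omega,\eta_0)=(1,1/(5L))$.

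The paper avoids this by \emph{not} performing your bias--variance split. It bounds $\EE\|v^t-\nabla f(x^*)\|^2$ directly via Young's inequality on $v^t-\nabla f(x^*)=(\nabla f_i(x^t)-\nabla f_i(x^*))+(\nabla f_i(x^*)-\nabla f_i(u_i^t)+\alpha^t-\nabla f(x^*))$, obtaining $\le 4L\,D_f(x^t,x^*)+\tfrac{2}{n}\sum_k\|\nabla f_k(u_k^t)-\nabla f_k(x^*)\|^2$ with no separate $\|\nabla f(x^t)-\nabla f(x^*)\|^2$ term. The inner product is handled upstream by \eqref{eq:scal_prod_cvx}, $\<\nabla f(x^t)-\nabla f(x^*),x^t-x^*\>\ge\tfrac\mu2\|x^t-x^*\|^2+D_f(x^t,x^*)$, so the full $-\eta\mu\|x^t-x^*\|^2$ appears immediately and $D_f(x^t,x^*)$ is used only to cancel the $10L\eta^2 D_f$ coming from variance plus potential recursion. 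This is what delivers $\omega=1$ at $\eta_0=1/(5L)$.

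One further minor point: the $\rho=1/(3n)$ contraction on $\cM^t$ needs no $D_f$ budget at all. It falls out automatically from $(1-\tfrac1n)+\tfrac{2}{3n}=1-\tfrac1{3n}$, the first term from the refresh recursion and the second from the variance bound; your remark about ``reserving a residual fraction of $D_f$'' for this purpose is unnecessary.
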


\begin{algorithm}[t]
   \caption{SAGA Oracle}
   \label{alg:v_saga}
\begin{algorithmic}[1]
	\Require $x^t$, table of past gradients $\nabla f_1(u_1^t), \dotsc, \nabla f_n(u_n^t)$ and their average $\alpha^t$
	\State Sample subset $S$ from $\{1,\dotsc, n\}$ of size $\tau$
	\State $v^t= \frac{1}{\tau}\sum_{i\in S}\left(\nabla f_i(x^t) - \nabla f_i(u_i^t)\right) + \alpha^t$
	\State For all $i\in S$ update  $\nabla f_i(u_i^{t+1})$ with $u_i^{t+1} = x^t$
	\\ \Return $v^t$
\end{algorithmic}
\end{algorithm}

\begin{lemma}[Proof in Appendix~\ref{ap:sgd}]\label{lem:sgd}
	Assume that at an optimum $x^*$ the variance of stochastic gradients is finite, i.e., $\sigma_*^2\eqdef \EE_\xi \|\nabla f_\xi(x^*) - \nabla f(x^*)\|^2 < +\infty$. Then, SGD that terminates after at most $t_0$ iterations satisfies Assumption~\ref{as:method}(a) with $\eta_0=\frac{1}{4L}$, $\omega=1$ and $\rho=0$. In this case, sequence $\{\cM^t\}_{t=0}^{t_0}$ is given by
$
		\cM^t = 2\eta^2(t_0 - t)\sigma_*^2.
$
	If $f$ is strongly convex and $\sigma_*=0$, it satisfies Assumption~\ref{as:method}(b) with $\eta_0 = \frac{1}{2L}$, $\omega=1$ and $\cM^t=0$.
\end{lemma}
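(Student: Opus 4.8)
The plan is to unroll a single SGD step and take the conditional expectation over the fresh sample $\xi^t$. Writing $v^t = \nabla f_{\xi^t}(x^t)$, we have $w^t - w^* = (x^t - x^*) - \eta\bigl(v^t - \nabla f(x^*)\bigr)$, so expanding the square and using $\EE_{\xi^t}[v^t \mid x^t] = \nabla f(x^t)$ gives
\[
\EE_{\xi^t}\|w^t - w^*\|^2 = \|x^t - x^*\|^2 - 2\eta\<\nabla f(x^t) - \nabla f(x^*), x^t - x^*> + \eta^2\,\EE_{\xi^t}\|v^t - \nabla f(x^*)\|^2 .
\]
Convexity of $f$ bounds the first-order term below by $2\eta D_f(x^t,x^*)$ (after discarding the nonnegative $D_f(x^*,x^t)$ piece, using $D_f(x^t,x^*) + D_f(x^*,x^t) = \<\nabla f(x^t) - \nabla f(x^*), x^t - x^*>$), so everything reduces to controlling the second moment $\EE_{\xi^t}\|v^t - \nabla f(x^*)\|^2$; the stated full expectation then follows by the tower property.

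For part (a) I would split $v^t - \nabla f(x^*) = \bigl(\nabla f_{\xi^t}(x^t) - \nabla f_{\xi^t}(x^*)\bigr) + \bigl(\nabla f_{\xi^t}(x^*) - \nabla f(x^*)\bigr)$ and apply $\|a+b\|^2 \le 2\|a\|^2 + 2\|b\|^2$. The second term has expectation exactly $\sigma_*^2$ by definition. For the first term, $L$-smoothness and convexity of $f_\xi$ (almost surely) give the co-coercivity inequality $\|\nabla f_{\xi^t}(x^t) - \nabla f_{\xi^t}(x^*)\|^2 \le 2L\,D_{f_{\xi^t}}(x^t, x^*)$ from Appendix~\ref{sec:basic_facts}, whose expectation over $\xi^t$ equals $2L\,D_f(x^t,x^*)$ because $\EE_\xi \nabla f_\xi(x^*) = \nabla f(x^*)$. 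Collecting terms yields
\[
\EE\|w^t - w^*\|^2 \le \|x^t - x^*\|^2 - (2\eta - 4L\eta^2)\,D_f(x^t, x^*) + 2\eta^2\sigma_*^2 ,
\]
and $\eta \le \eta_0 = \nicefrac{1}{4L}$ forces $2\eta - 4L\eta^2 \ge \eta$. Since the telescoping choice $\cM^t = 2\eta^2(t_0 - t)\sigma_*^2$ satisfies $\cM^t - \cM^{t+1} = 2\eta^2\sigma_*^2$, moving that constant to the left-hand side produces exactly Assumption~\ref{as:method}(a) with $\omega = 1$; the termination after at most $t_0$ steps is precisely what is needed for $\cM^t \ge 0$ throughout the finite horizon $t \in \{0,\dots,t_0\}$.

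For part (b), when $\sigma_* = 0$ we have $\nabla f_\xi(x^*) = \nabla f(x^*)$ almost surely, hence $\EE_{\xi^t}\|v^t - \nabla f(x^*)\|^2 = \EE_{\xi^t}\|\nabla f_{\xi^t}(x^t) - \nabla f_{\xi^t}(x^*)\|^2 \le 2L\,D_f(x^t, x^*)$, and using $D_f(x^t,x^*) \le \<\nabla f(x^t) - \nabla f(x^*), x^t - x^*>$ I get $\EE\|w^t - w^*\|^2 \le \|x^t - x^*\|^2 - (2\eta - 2L\eta^2)\<\nabla f(x^t) - \nabla f(x^*), x^t - x^*>$. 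For $\eta \le \nicefrac{1}{2L}$ the coefficient $2\eta - 2L\eta^2 \ge \eta$, and $\mu$-strong convexity of $f$ gives $\<\nabla f(x^t) - \nabla f(x^*), x^t - x^*> \ge \mu\|x^t - x^*\|^2$, so $\EE\|w^t - w^*\|^2 \le (1 - \eta\mu)\|x^t - x^*\|^2$, i.e.\ Assumption~\ref{as:method}(b) with $\omega = 1$, $\cM^t \equiv 0$.

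The computation itself is a routine assembly of the standard smoothness/convexity inequalities; the one genuinely delicate point — and the thing to state carefully — is the finite-horizon bookkeeping for $\cM^t$: it is defined only for $t \le t_0$, stays nonnegative there, and telescopes with the exact increment $2\eta^2\sigma_*^2$ produced by the variance term, which is what lets a non-variance-reduced estimator fit the otherwise "variance-reduced" template of Assumption~\ref{as:method}.
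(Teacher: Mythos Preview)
Your proof is correct and follows essentially the same route as the paper: the variance bound you derive via the Young split plus co-coercivity is exactly the content of the paper's Lemma~\ref{lem:sgd_variance}, and the telescoping choice of $\cM^t$ matches verbatim. The only cosmetic difference is in part~(b), where you exploit $\sigma_*=0$ to skip Young's inequality (getting a $2L$ instead of $4L$ coefficient before closing with strong convexity), whereas the paper reuses the general $4L$ bound and drops the resulting nonnegative $D_f$ term; both paths land on the same $\eta_0=\nicefrac{1}{2L}$.
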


There are two important cases for SGD. If the model is overparameterized, i.e., $\sigma_*\approx 0$,  we get almost the same guarantees for SGD as for GD. If,  $\sigma_*\gg 0$, then one needs to choose $\eta = \cO\left(\nicefrac{1}{(\sqrt{t_0} L)}\right)$ in order to keep $\cM^0$ away from $+\infty$. This effectively changes the  $\cO\left(\nicefrac{1}{t}\right)$ rate to $\cO\left(\nicefrac{1}{\sqrt{t}}\right)$, see Cor~\ref{cor:sgd}. Moreover,  obtaining a $\cO\left(\nicefrac{1}{t}\right)$ rate for strongly convex case requires a separate proof.

\section{Convergence}
Let  $\gamma \eqdef \min_{j=1,\dotsc,m} \frac{1}{\eta_j L_j}$, where $L_j\in\RR\cup \{+\infty\}$ is the smoothness constant of $g_j$, in most cases giving $L_j=+\infty$ and $\gamma=0$. Tho goal of our analysis is to show that with introducing new term in the Lyapunov function,
$
	\cY^t
	\eqdef (1+\gamma)\sumkm \eta_k^2\EE\|y_k^{t} - y_k^*\|^2,
$
the convergence is not significantly hurt. This term will be always incorporated in the full Lyapunov function defined as
\begin{align*}
        \cL^t
        \eqdef  \EE\|x^t - x^*\|^2 + \cM^t + \cY^t,
\end{align*}
where $\cM^t$ is from Assumption~\ref{as:method}. In the proof of $\cO\left(\nicefrac{1}{t^2}\right)$ rate we will use decreasing stepsizes and $\cY^t$ will be defined slightly differently, but except for this, it is going to be the same Lyapunov function everywhere.

\subsection{$\cO(\nicefrac{1}{t})$ convergence for general convex problem}

\begin{theorem}[Proof in Appendix~\ref{ap:1_t_rate}]\label{th:1_over_t_rate}
    Assume $f$ is $L$-smooth and $\mu$-strongly convex, $g_1, \dotsc, g_m, R$ are proper, closed and convex. If we use a method for generating $v^t$ which satisfies Assumption~\ref{as:method} and $\eta\le \eta_0$, then
\[
 \compactify 
        \EE D_f(\overline x^t, x^*)
        \le \frac{1}{\omega \eta t}\cL^0,
\]
    where $\cL^0\eqdef \|x^0 - x^*\|^2 + \cM^0 + \sumkm  \eta_k^2 \|y_k^0 - y_k^*\|^2$ and $\overline x^t \eqdef \frac{1}{t}\sum_{k=0}^{t-1} x^k$.
\end{theorem}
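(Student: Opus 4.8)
The proof rests on a single one-step Lyapunov inequality: I will show that for every $t\ge 0$,
\[
\cL^{t+1}\ \le\ \cL^t - \omega\eta\, \EE D_f(x^t, x^*),
\]
with $\cL^t = \EE\|x^t-x^*\|^2 + \cM^t + \cY^t$ and $\cY^t = \sumkm \eta_k^2\EE\|y_k^t-y_k^*\|^2$ (the $\gamma=0$ version of the Lyapunov term, which is all that is needed for this rate). Granting this, the endgame is routine: since $D_f(\cdot,x^*)\ge 0$ for convex $f$ and $\cL^t\ge 0$, summing over $k=0,\dots,t-1$ gives $\omega\eta\sum_{k=0}^{t-1}\EE D_f(x^k,x^*)\le \cL^0-\cL^t\le \cL^0$; and since $x\mapsto D_f(x,x^*)=f(x)-f(x^*)-\<\nabla f(x^*),x-x^*>$ is convex in $x$, Jensen's inequality yields $\EE D_f(\overline x^t,x^*)\le \tfrac1t\sum_{k=0}^{t-1}\EE D_f(x^k,x^*)\le \tfrac{1}{\omega\eta t}\cL^0$. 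Note that $L$-smoothness and $\mu$-strong convexity of $f$ are never used directly here; they enter only through Assumption~\ref{as:method}(a), which is the sole place $f$ appears.

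To obtain the one-step inequality, I first peel off the gradient part. Adding $\cM^{t+1}$ to both sides of the target and invoking Assumption~\ref{as:method}(a) in the form $\EE[\|w^t-w^*\|^2+\cM^{t+1}]\le \EE\|x^t-x^*\|^2 - \omega\eta\EE D_f(x^t,x^*)+\EE\cM^t$ (iterating expectations over the gradient-oracle randomness), it suffices to prove the ``prox-side'' estimate
\[
\EE_j\big[\|x^{t+1}-x^*\|^2 + \cY^{t+1}\big]\ \le\ \|w^t-w^*\|^2 + \cY^t,
\]
where $\EE_j$ is the expectation over the index $j$ sampled at step $t$, conditioned on $w^t=x^t-\eta v^t$, $z^t$ and $\{y_k^t\}$. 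For this I use Assumption~\ref{as:optimality}: with $r^*\in\partial R(x^*)$, $y_j^*\in\partial g_j(x^*)$ and $\nabla f(x^*)+\avejm y_j^*+r^*=0$, the prox definition gives $x^* = \proxR(w^*-\eta y^*)$ and $x^* = \proxj(x^*+\eta_j y_j^*)$, where $w^*\eqdef x^*-\eta\nabla f(x^*)$ and $y^*\eqdef\avejm y_j^*$. Now apply firm nonexpansiveness of the proximal operator (the prox inequality, Appendix~\ref{sec:basic_facts}) twice: to $\proxR$ at the pair $(w^t-\eta y^t,\ w^*-\eta y^*)$, with outputs $z^t$ and $x^*$ and residuals $\eta r^t$, $\eta r^*$ where $r^t\eqdef\tfrac1\eta(w^t-\eta y^t-z^t)\in\partial R(z^t)$; and to $\proxj$ at the pair $(z^t+\eta_j y_j^t,\ x^*+\eta_j y_j^*)$, with outputs $x^{t+1}$ and $x^*$, whose residual produces exactly the updated subgradient $y_j^{t+1}=\tfrac1{\eta_j}(z^t+\eta_j y_j^t-x^{t+1})\in\partial g_j(x^{t+1})$. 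Taking $\EE_j$ of the $\proxj$-bound, using $y_k^{t+1}=y_k^t$ for $k\ne j$ (so $\EE_j\cY^{t+1}-\cY^t$ reduces to a single weighted term over the changed coordinate, as in Point-SAGA) and the calibration of $\eta_j$ and the sampling probabilities under which the weighted sum of the discrepancies $y_j^t-y_j^*$ equals $\eta(y^t-y^*)$, the $g_j$-estimate collapses to
\[
\EE_j\|x^{t+1}-x^*\|^2 + \EE_j\cY^{t+1} - \cY^t\ \le\ \|z^t-x^*\|^2 + 2\eta\<z^t-x^*,\ y^t-y^*>.
\]

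The crux — and the step I expect to be most delicate — is to substitute the $\proxR$-bound $\|z^t-x^*\|^2\le \|(w^t-w^*)-\eta(y^t-y^*)\|^2-\eta^2\|r^t-r^*\|^2$ into the last display and verify that all surviving cross terms and quadratics cancel into a manifestly nonpositive quantity. One uses $z^t-w^t=-\eta(y^t+r^t)$ (from the $\proxR$ step) together with the optimality identity $x^*-w^*=-\eta(y^*+r^*)$, which gives $(z^t-x^*)-(w^t-w^*)=-\eta\big[(y^t-y^*)+(r^t-r^*)\big]$; the cross term $2\eta\<z^t-x^*,y^t-y^*>-2\eta\<w^t-w^*,y^t-y^*>$ then equals $-2\eta^2\<(y^t-y^*)+(r^t-r^*),\ y^t-y^*>$, and combined with the leftover $+\eta^2\|y^t-y^*\|^2$ and $-\eta^2\|r^t-r^*\|^2$ it sums to exactly $-\eta^2\|(y^t-y^*)+(r^t-r^*)\|^2\le 0$. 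Dropping this term yields the prox-side estimate $\EE_j[\|x^{t+1}-x^*\|^2+\cY^{t+1}]\le\|w^t-w^*\|^2+\cY^t$; combining it with Assumption~\ref{as:method}(a) as above gives the one-step Lyapunov inequality, and the first paragraph then completes the proof.
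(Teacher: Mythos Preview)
Your proof is correct and follows essentially the same route as the paper. The paper packages your ``prox-side'' estimate as a separate key lemma (Lemma~\ref{lem:key}), proved with the same two applications of firm nonexpansiveness and the same expectation-over-$j$ bookkeeping; your inline computation with the residuals $r^t,r^*$ recovers exactly the paper's conclusion $\EE_j[\|x^{t+1}-x^*\|^2+\cY^{t+1}]\le \|w^t-w^*\|^2+\cY^t-\|z^t-w^t-(x^*-w^*)\|^2$ (note that $\eta^2\|(y^t-y^*)+(r^t-r^*)\|^2=\|z^t-w^t-(x^*-w^*)\|^2$), after which the paper combines with Assumption~\ref{as:method}(a), telescopes, and applies Jensen just as you do.
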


If $R\equiv 0$ and $g_j\equiv 0$ for all $j$, then this transforms into $\cO(\nicefrac{1}{t})$ convergence of $f(x^t) - \min f(x)$, which is the correct rate.

The next result takes care of the case when SGD is used, which requires special consideration.
\begin{corollary}\label{cor:sgd}
	If we use SGD for $t$ iterations with constant stepsize, the method converges to a neighborhood of radius $\nicefrac{\cM^0}{\eta t}=2\eta\sigma_*^2$. If we choose the stepsize $\eta = \Theta\left(\nicefrac{1}{(L\sqrt{t})} \right)$, then $2\eta\sigma_*^2=\cO(\nicefrac{1}{\sqrt{t}})$, and we recover $\cO\left(\nicefrac{1}{\sqrt{t}}\right)$ rate.
\end{corollary}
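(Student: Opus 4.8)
The plan is to derive Corollary~\ref{cor:sgd} as a bookkeeping consequence of Theorem~\ref{th:1_over_t_rate} and Lemma~\ref{lem:sgd}; the only point requiring attention is that the constant $\cM^0$ produced by SGD depends on the chosen horizon. First I would fix a horizon $t$ and run SGD with constant stepsize $\eta\le\eta_0=\nicefrac{1}{4L}$. By Lemma~\ref{lem:sgd} (which needs $\sigma_*^2<+\infty$) the estimator then satisfies Assumption~\ref{as:method}(a) with $\omega=1$ and $\cM^s=2\eta^2(t-s)\sigma_*^2$; in particular $\cM^0=2\eta^2 t\,\sigma_*^2$. Since the hypotheses of Theorem~\ref{th:1_over_t_rate} hold (only convexity of $f$ is used for the $\cO(\nicefrac{1}{t})$ bound, so the strong-convexity clause is vacuous when $\mu=0$), it gives
\[
\EE D_f(\overline x^t, x^*)\;\le\;\frac{1}{\omega\eta t}\cL^0\;=\;\frac{1}{\eta t}\Bigl(\|x^0-x^*\|^2+\sum_{k=1}^m\eta_k^2\|y_k^0-y_k^*\|^2+\cM^0\Bigr).
\]

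Next I would split this into a vanishing part and a noise floor. Abbreviating $C\eqdef\|x^0-x^*\|^2+\sum_{k=1}^m\eta_k^2\|y_k^0-y_k^*\|^2$, the bound reads $\EE D_f(\overline x^t, x^*)\le \frac{C}{\eta t}+\frac{\cM^0}{\eta t}$, and substituting $\cM^0=2\eta^2 t\sigma_*^2$ collapses the last term to $\frac{\cM^0}{\eta t}=2\eta\sigma_*^2$ --- a quantity that does not shrink as $t\to\infty$ for fixed $\eta$. This is exactly the first assertion: with a constant stepsize the averaged iterate is driven, in Bregman divergence, into a $2\eta\sigma_*^2$-ball around $x^*$, the residual $\nicefrac{C}{\eta t}$ disappearing as the horizon grows. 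For the second assertion I would balance the two terms by taking $\eta=\Theta(\nicefrac{1}{(L\sqrt{t})})$, chosen small enough to respect $\eta\le\nicefrac{1}{4L}$ (which holds once $t$ exceeds a fixed constant); then $\frac{C}{\eta t}=\Theta(\nicefrac{CL}{\sqrt{t}})$ and $2\eta\sigma_*^2=\Theta(\nicefrac{\sigma_*^2}{(L\sqrt{t})})$, so $\EE D_f(\overline x^t, x^*)=\cO(\nicefrac{1}{\sqrt{t}})$.

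There is no genuinely hard step here. The one conceptual subtlety worth spelling out is that $\cM^0$ scales with the \emph{horizon} $t$, not merely with the iteration index; this is what prevents the clean $\cO(\nicefrac{1}{t})$ rate from surviving unchanged and forces the stepsize to be tuned as $\Theta(\nicefrac{1}{\sqrt{t}})$, the standard mechanism behind the $\cO(\nicefrac{1}{\sqrt{t}})$ behaviour of vanilla SGD under gradient noise $\sigma_*>0$. If one wanted the sharpest constant, minimizing $\frac{C}{\eta t}+2\eta\sigma_*^2$ over $\eta\in(0,\nicefrac{1}{4L}]$ yields $\eta=\min\{\nicefrac{1}{4L},\sqrt{\nicefrac{C}{(2t\sigma_*^2)}}\}$ and the bound $\cO(\nicefrac{\sigma_*\sqrt{C}}{\sqrt{t}})$, but this refinement is not needed for the stated $\Theta$-form.
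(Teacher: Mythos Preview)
Your proposal is correct and matches the paper's approach: the paper provides no separate proof for this corollary, treating it as an immediate consequence of plugging Lemma~\ref{lem:sgd}'s sequence $\cM^t=2\eta^2(t_0-t)\sigma_*^2$ into Theorem~\ref{th:1_over_t_rate}, which is exactly what you do. One small point you might tighten: your constant $C$ contains $\sum_k\eta_k^2\|y_k^0-y_k^*\|^2$, and since $\eta_k=\eta/(mp_k)$ this term also scales with $\eta^2$; this only helps (it contributes $\cO(\eta/t)=\cO(1/t^{3/2})$ after division by $\eta t$), so the conclusion is unaffected.
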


\subsection{$\cO(\nicefrac{1}{t^2})$ convergence for strongly convex $f$}
In this section, we consider a variant of Algorithm~\ref{alg:sdm} with time-varying stepsizes,
\begin{align*}
	z^t 
	&= \prox_{\eta^{t+1} R}(x^t - \eta^{t} v^t - \eta^{t} y^t), \qquad x^{t+1} 
	= \prox_{\eta_j^{t} g_j}(z^t + \eta_j^{t} y_j^t).	
\end{align*}

\begin{theorem}[Proof in Appendix~\ref{ap:1_t2_rate}]\label{th:1_t2_rate}
	Consider updates with time-varying stepsizes, $\eta^t = \frac{2}{\mu\omega(a + t)}$ and $\eta_j^t = \frac{\eta^t}{m p_j}$ for $j=1,\dotsc, m$, where $a\ge 2\max\left\{\frac{1}{\omega\mu\eta_0}, \frac{1}{\rho} \right\}$. Then
\[ \compactify 
		\EE \|x^t - x^*\|^2
		\le \frac{a^2}{(t+a-1)^2}\cL^0,
\]
	where $\cL^0 = \|x^0 - x^*\|^2 + \cM^0 + \sumkm (\eta_k^0)^2 \|y_k^0 - y_k^*\|^2$.
\end{theorem}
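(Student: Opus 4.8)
The plan is to establish a single one-step recursion on the Lyapunov function $\cL^t = \EE\|x^t-x^*\|^2 + \cM^t + \cY^t$ that refines the one behind Theorem~\ref{th:1_over_t_rate}: it must (i) use $\mu$-strong convexity of $f$ to gain a contraction factor $1-\omega\mu\eta^t$ on $\EE\|x^t-x^*\|^2$, and (ii) carry the time-varying stepsizes through the two proximal steps. Unrolling this recursion against the schedule $\eta^t=\tfrac{2}{\mu\omega(a+t)}$ then produces the $\cO(\nicefrac{1}{t^2})$ rate.

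First I would analyze the inner prox step. Since $y_j^{t+1}=\tfrac{1}{\eta_j^t}(z^t+\eta_j^t y_j^t-x^{t+1})\in\partial g_j(x^{t+1})$ and $x^*=\prox_{\eta_j^t g_j}(x^*+\eta_j^t y_j^*)$ (valid as $y_j^*\in\partial g_j(x^*)$), firm non-expansiveness of $\prox_{\eta_j^t g_j}$ gives, for the index $j$ drawn at step $t$,
$\|x^{t+1}-x^*\|^2 + (\eta_j^t)^2\|y_j^{t+1}-y_j^*\|^2 \le \|z^t-x^*\|^2 + 2\eta_j^t\<z^t-x^*,y_j^t-y_j^*> + (\eta_j^t)^2\|y_j^t-y_j^*\|^2$.
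I would then bound $\|z^t-x^*\|^2$ via the outer step $z^t=\prox_{\eta^{t+1}R}(w^t-\eta^t y^t)$ with $w^t=x^t-\eta^t v^t$, using firm non-expansiveness against the fixed-point characterisation of $x^*$ from Assumption~\ref{as:optimality} (i.e.\ $\nabla f(x^*)+y^*+r^*=0$; the mismatch between $\eta^{t+1}$ in the prox and $\eta^t$ in the gradient/$y$ terms only produces lower-order corrections), followed by the subgradient inequalities for $R$ and each $g_k$ at $x^*$, the (strong) convexity inequalities for $f$ at $x^t$ that create the $-\omega\eta^t D_f(x^t,x^*)$ and $-\tfrac{\mu}{2}\eta^t\|x^t-x^*\|^2$ terms, and Young's inequality on the cross terms. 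Taking conditional expectation over the random index $j$ (probabilities $p_j$), using $\eta_j^t=\eta^t/(mp_j)$ so that $\EE_j[(\eta_j^t)^2\|y_j^{t+1}-y_j^*\|^2]$ and $\EE_j[(\eta_j^t)^2\|y_j^t-y_j^*\|^2]$ rebuild weighted sums over all $k$, and finally invoking Assumption~\ref{as:method}(b) on the $w^t$ term, I would arrive at an inequality of the shape
$\EE\|x^{t+1}-x^*\|^2 + \cM^{t+1} + \cY^{t+1} \le (1-\omega\mu\eta^t)\EE\|x^t-x^*\|^2 + (1-\rho)\cM^t + \theta_t\,\cY^t - \big(\text{non-negative remainder}\big)$,
where $\cY^t$ is the slightly redefined potential built from the weights $(\eta_k^t)^2$ and the survival probabilities $1-p_k$, and $\theta_t<1$ comes from the shrinking weights.

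Next I would close the recursion. The hypothesis $a\ge 2/(\omega\mu\eta_0)$ forces $\eta^t\le\eta^0\le\eta_0$ for every $t$, so Assumption~\ref{as:method} applies throughout; and $a\ge 2/\rho$ forces $1-\rho\le 1-\omega\mu\eta^t$, so the $\cM^t$ term decays at least as fast as $\EE\|x^t-x^*\|^2$. Since $\omega\mu\eta^t=\tfrac{2}{a+t}$ and $(\eta_k^{t+1})^2/(\eta_k^t)^2=\big(\tfrac{a+t}{a+t+1}\big)^2$, one checks that each of the three pieces of $\cL^t$ decays by at most $\big(\tfrac{a+t-1}{a+t}\big)^2$ once the non-negative remainder is spent to cover the discrepancy on the $\cY$-part, giving $\cL^{t+1}\le\big(\tfrac{a+t-1}{a+t}\big)^2\cL^t$. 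Unrolling from $t=0$, $\cL^t\le\prod_{s=0}^{t-1}\big(\tfrac{a+s-1}{a+s}\big)^2\cL^0=\big(\tfrac{a-1}{a+t-1}\big)^2\cL^0\le\tfrac{a^2}{(t+a-1)^2}\cL^0$, and the theorem follows since $\EE\|x^t-x^*\|^2\le\cL^t$.

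The main obstacle is the control of the non-smooth potential $\cY^t$. Unlike $\EE\|x^t-x^*\|^2$ and $\cM^t$, the quantities $\EE\|y_k^t-y_k^*\|^2$ do not contract, so the required $1/(a+t)^2$-type decay of $\cY^t$ has to be produced by the shrinking weights $(\eta_k^t)^2$ together with the SAGA-type averaging $\EE_j[\,\cdot\,]=(1-p_k)(\text{old})+p_k(\text{new})$ and the negative remainders from firm non-expansiveness and convexity of the $g_k$'s. Because the weight ratio $\big(\tfrac{a+t}{a+t+1}\big)^2$ is itself slightly larger than the target $\big(\tfrac{a+t-1}{a+t}\big)^2$, making everything fit forces a careful choice of the redefined $\cY^t$ — the exact placement of the $1-p_k$ factors and whether its weights are indexed at $t$ or $t-1$ — so that the leftover negatives, together with the $\tfrac{1}{(a+t)^2}$ slack coming from the $x$-contraction, exactly cover the deficit. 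This balancing (and the reason the theorem couples $\eta_j^t$ to $\eta^t$ precisely through $\eta_j^t=\eta^t/(mp_j)$) is the delicate part; the remainder is the same bookkeeping as in Theorem~\ref{th:1_over_t_rate}, now with constants and stepsize indices tracked explicitly.
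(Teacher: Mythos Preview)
Your outline is essentially correct and follows the same skeleton as the paper (key lemma plus Assumption~\ref{as:method}(b), then telescope), but you over-complicate the one point that actually matters: the treatment of $\cY^t$. The paper does \emph{not} use any ``negative remainder'' to cover a ``deficit'' on the $\cY$-part, and there is in fact no deficit to cover once the bookkeeping is set up right.

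The clean observation you are missing is the algebraic identity
\[
(1-\omega\mu\eta^t)\left(\frac{\eta^{t-1}}{\eta^t}\right)^2
=\frac{(a+t-2)(a+t)}{(a+t-1)^2}<1,
\qquad\text{so}\qquad
1-\omega\mu\eta^t\le\left(\frac{\eta^t}{\eta^{t-1}}\right)^2.
\]
Now apply the key lemma at iteration $t$ with the step-$t$ stepsizes; it yields on the left $\sum_k(\eta_k^t)^2\EE\|y_k^{t+1}-y_k^*\|^2$ and on the right exactly $\sum_k(\eta_k^t)^2\EE\|y_k^t-y_k^*\|^2$. The point is that this right-hand term equals $\bigl(\eta^t/\eta^{t-1}\bigr)^2\sum_k(\eta_k^{t-1})^2\EE\|y_k^t-y_k^*\|^2$ identically, because $\eta_k^t/\eta_k^{t-1}=\eta^t/\eta^{t-1}$ for every $k$. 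Hence if the $\cY$-component of the Lyapunov function is indexed with the \emph{previous} stepsize (i.e.\ the $\cY$-part of $\cL^t$ is $\sum_k(\eta_k^{t-1})^2\EE\|y_k^t-y_k^*\|^2$), the $\cY$-contraction factor is \emph{exactly} $(\eta^t/\eta^{t-1})^2$, the same bound that governs $\EE\|x^t-x^*\|^2$ and (via $1-\rho\le1-\omega\mu\eta^t$) also $\cM^t$. The negative term $-\|z^t-w^t-(x^*-w^*)\|^2$ is simply dropped. Telescoping $\cL^{t+1}\le(\eta^t/\eta^{t-1})^2\cL^t$ then gives $\cL^{t+1}\le(\eta^t/\eta^0)^2\cL^0=\bigl(a/(a+t)\bigr)^2\cL^0$.

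So your worry that ``the weight ratio $\bigl(\tfrac{a+t}{a+t+1}\bigr)^2$ is slightly larger than the target $\bigl(\tfrac{a+t-1}{a+t}\bigr)^2$'' is an artifact of indexing $\cY^t$ with $(\eta_k^t)^2$ rather than $(\eta_k^{t-1})^2$; with the lagged choice the two ratios coincide and no ``leftover negatives'' or ``slack from the $x$-contraction'' are needed. Everything else in your plan (checking $\eta^t\le\eta_0$ from $a\ge 2/(\omega\mu\eta_0)$, checking $1-\rho\le1-\omega\mu\eta^t$ from $a\ge 2/\rho$, and reusing the key lemma) is right.
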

This improves upon $\cO(\nicefrac{1}{t})$ convergence proved in~\cite{defazio2016simple} under similar assumptions and matches the bound in~\cite{chambolle2018stochastic}.

In Cor~\ref{cor:sgd} we obtained $\cO(\nicefrac{1}{\sqrt{t}})$ rate for SGD with $\sigma_*\neq 0$. It is not surprising that the rate is worse as it is so even with $g\equiv 0$. For standard SGD we are able to improve the guarantee above to $\cO(\nicefrac{1}{t})$ when the objective is strongly convex.

\begin{theorem}[Proof in Appendix~\ref{ap:sgd_str_cvx}]\label{th:sgd_str_cvx}
	Assume $f$ is $\mu$-strongly convex, $f_\xi$ is almost surely convex and $L$-smooth. Let the update be produced by SGD, i.e., $v^t = \nabla f_{\xi^t}(x^t)$, and let us use time-varying stepsizes $\eta^{t-1} = \frac{2}{a + \mu t}$ with $a\ge 4L$. Then
\[ \compactify 
		\EE \|x^t - x^*\|^2
		\le \frac{8\sigma_*^2}{\mu(a + \mu t)} + \frac{a^2}{(a + \mu t)^2}\cL^0.
\]
\end{theorem}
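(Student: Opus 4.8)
The plan is to track the Lyapunov function $\cL^t = \EE\|x^t - x^*\|^2 + \cY^t$ (there is no $\cM^t$ term here, since for strongly convex SGD with $\sigma_* \neq 0$ we do not use the ``$\cM^t$'' device from Lemma~\ref{lem:sgd}; instead the variance at the optimum will be carried explicitly as an additive $\sigma_*^2$ term in the recursion). The first step is to rerun the one-step analysis that underlies Theorem~\ref{th:1_t2_rate}, but with the SGD estimator $v^t = \nabla f_{\xi^t}(x^t)$ plugged in by hand rather than through Assumption~\ref{as:method}. Writing $w^t = x^t - \eta^t v^t$ and $w^* = x^* - \eta^t\nabla f(x^*)$, the key elementary fact is $\EE\|w^t - w^*\|^2 \le \|x^t - x^*\|^2 - 2\eta^t\EE D_f(x^t,x^*) - \mu\eta^t\|x^t-x^*\|^2 + (\eta^t)^2\cdot 2L\,\EE D_f(x^t,x^*)\cdot(\text{const}) + (\eta^t)^2\sigma_*^2$, obtained from strong convexity of $f$ together with the standard bound $\EE\|\nabla f_{\xi^t}(x^t) - \nabla f(x^*)\|^2 \le 4L\,\EE D_f(x^t,x^*) + 2\sigma_*^2$ and $L$-smoothness. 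The condition $a \ge 4L$ (hence $\eta^t \le \tfrac{1}{2L}$) is exactly what makes the $D_f$ terms combine with a nonpositive coefficient, so that, after also absorbing the non-smooth $\cY^t$ part of the argument verbatim from the proof of Theorem~\ref{th:1_t2_rate}, one arrives at a clean recursion of the form
\begin{align*}
\cL^{t+1} \le (1 - \mu\eta^t)\cL^t + (\eta^t)^2\sigma_*^2.
\end{align*}

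The second step is to solve this scalar recursion with $\eta^{t-1} = \tfrac{2}{a+\mu t}$, i.e.\ $\eta^t = \tfrac{2}{a+\mu(t+1)}$. Set $A_t \eqdef (a+\mu t)^2$ or a similar quadratic weight; multiplying the recursion through by $A_t$ and checking that $A_t(1-\mu\eta^t) \le A_{t-1}$ (which holds because $1 - \mu\eta^t = 1 - \tfrac{2\mu}{a+\mu(t+1)} \le \tfrac{a+\mu(t-1)}{a+\mu(t+1)}$ and $(a+\mu(t+1))(a+\mu(t-1)) \le (a+\mu t)^2$) turns it into a telescoping inequality $A_t\cL^{t+1} \le A_{t-1}\cL^t + A_t(\eta^t)^2\sigma_*^2$. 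Since $A_t(\eta^t)^2 = 4$, summing from $0$ to $t-1$ gives $A_{t-1}\cL^t \le A_{-1}\cL^0 + 4t\sigma_*^2$, and then dividing by $A_{t-1} = (a+\mu t - \mu)^2 \asymp (a+\mu t)^2$ yields both terms in the claimed bound: the $4t\sigma_*^2/(a+\mu t)^2 = \cO(1/t)$ noise floor (bounding $t \le (a+\mu t)/\mu$ to write it as $\tfrac{8\sigma_*^2}{\mu(a+\mu t)}$, up to the stated constants) and the $\tfrac{a^2}{(a+\mu t)^2}\cL^0$ contraction of the initial conditions.

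I expect the main obstacle to be bookkeeping rather than conceptual: making sure that the non-smooth $\cY^t$ block of the argument, which in Theorem~\ref{th:1_t2_rate} relies on Assumption~\ref{as:method} only through the abstract inequality $\EE\|w^t - w^*\|^2 + \cM^{t+1} \le (1-\omega\eta\mu)\|x^t-x^*\|^2 + (1-\rho)\cM^t$, still goes through when $\cM^t \equiv 0$ but an extra $(\eta^t)^2\sigma_*^2$ is present on the right-hand side. Concretely, one must verify that this additive perturbation does not interact badly with the $\eta_j^t = \eta^t/(mp_j)$ scaling or with the time-varying-$\cY^t$ redefinition used in the $1/t^2$ proof, and that the constant in the bound $\EE\|\nabla f_{\xi^t}(x^t)-\nabla f(x^*)\|^2 \le 4L\,D_f(x^t,x^*) + 2\sigma_*^2$ is compatible with the requirement $a \ge 4L$ (equivalently $\omega=1$, $\eta_0 = \tfrac{1}{2L}$ as in Lemma~\ref{lem:sgd}). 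Once the one-step inequality is in the displayed form above, the rest is the standard $\cO(1/t)$ SGD telescoping argument and should match the constants $8$ and $a^2$ in the statement after the routine estimates.
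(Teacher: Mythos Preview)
Your plan is correct and follows essentially the same route as the paper. The paper also (i) uses the variance bound $\EE\|\nabla f_{\xi^t}(x^t)-\nabla f(x^*)\|^2\le 4L\,D_f(x^t,x^*)+2\sigma_*^2$, (ii) expands $\EE\|w^t-w^*\|^2$ with strong convexity and drops the $D_f$ term via $a\ge 4L\Rightarrow\eta^t\le\tfrac{1}{2L}$, (iii) feeds this into the key lemma with the time-varying $\cY^t$ exactly as in Theorem~\ref{th:1_t2_rate}, and (iv) solves the resulting scalar recursion. Two cosmetic differences: the paper first converts the contraction factor $(1-\mu\eta^t)$ to $(\eta^t/\eta^{t-1})^2$ \emph{before} combining with $\cY^t$ (this is the natural form because the $\cY^t$ part carries the ratio $(\eta^t/\eta^{t-1})^2$ directly, not $1-\mu\eta^t$; your displayed recursion $\cL^{t+1}\le(1-\mu\eta^t)\cL^t+\dots$ is slightly off for this reason, but your telescoping check $A_t(1-\mu\eta^t)\le A_{t-1}$ is exactly the inequality $(1-\mu\eta^t)\le(\eta^t/\eta^{t-1})^2$, so you recover the right thing), and the paper packages the telescoping into a standalone induction lemma rather than doing it inline. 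Also note the additive term is $2(\eta^t)^2\sigma_*^2$, not $(\eta^t)^2\sigma_*^2$, which is what produces the constant $8$ you were aiming for.
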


\subsection{Linear convergence for linear non-smoothness} \label{sec:linear_non_smoothness}

We now provide two linear convergence rates in the case when $R\equiv 0$ and $g_j(x) = \phi_j(\mA_j^\top x)$. 

\begin{theorem}[Proof in Appendix~\ref{ap:lin_conv_lin_model}]\label{th:lin_conv_lin_model}
    Assume that $f$ is $\mu$-strongly convex, $R\equiv 0$, $g_j(x) = \phi_j(\mA_j^\top x)$ for $j=1,\dotsc, m$ and take a method satisfying Assumption~\ref{as:method} with $\rho>0$. Then, if $\eta\le \eta_0$,
\[
 \compactify 
        \EE \|x^t - x^*\|^2         \le \left(1 - \min\{\rho, \omega\eta\mu, \rho_{A}\} \right)^t \cL^0,
\]
    where $\rho_{A} \eqdef \lambda_{\min}(\mA^\top\mA) \min_j \left(\nicefrac{p_j}{\|\mA_j\|}\right)^2$, and $\cL^0\eqdef \|x^0 - x^*\|^2 + \cM^0 + \sumkm  \eta_k^2 \|y_k^0 - y_k^*\|^2$.
\end{theorem}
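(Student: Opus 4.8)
The plan is to build a single-step contraction on the Lyapunov function $\cL^t = \EE\|x^t - x^*\|^2 + \cM^t + \cY^t$, where here (since every $g_j = \phi_j(\mA_j^\top \cdot)$ has $L_j = +\infty$, so $\gamma = 0$) we have $\cY^t = \sum_{k=1}^m \eta_k^2\EE\|y_k^t - y_k^*\|^2$. The starting point is the same per-iteration inequality that underlies Theorem~\ref{th:1_over_t_rate}: combining the firm nonexpansiveness of the two proximal steps (the $\prox_{\eta R}$ step producing $z^t$, and the $\prox_{\eta_j g_j}$ step producing $x^{t+1}$) with Assumption~\ref{as:method}(b), one arrives at a bound of the schematic form
\[
\EE\|x^{t+1} - x^*\|^2 + \cM^{t+1} + \EE\sum_k \eta_k^2\|y_k^{t+1} - y_k^*\|^2
\le (1 - \omega\eta\mu)\|x^t - x^*\|^2 + (1-\rho)\cM^t + \EE\sum_k \eta_k^2\|y_k^{t} - y_k^*\|^2 - (\text{negative terms}).
\]
First I would isolate those ``negative terms'': the SAGA-style update of the $y_j$ block contributes, in expectation over the random index $j$ drawn with probability $p_j$, a term of the order $-\sum_k p_k \eta_k^2 \EE\|y_k^{t+1} - y_k^*\|^2$ after re-indexing, and the firm nonexpansiveness of the proximal maps contributes a term controlling $\EE\|\mA_j^\top x^{t+1} - \mA_j^\top x^*\|^2$ type quantities. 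The key structural fact I would exploit is that $y_k^{t+1} - y_k^* \in \partial g_k(x^{t+1}) - \partial g_k(x^*)$ lives in $\Range(\mA_k)$, since $g_k = \phi_k(\mA_k^\top\cdot)$ forces subgradients into the column space of $\mA_k$; concretely one can write $y_k^{t+1} = \mA_k s_k^{t+1}$ and $y_k^* = \mA_k s_k^*$ for appropriate dual vectors.

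The heart of the argument is the following chain. The ``negative'' distance term we can harvest is proportional to $\|x^{t+1} - x^*\|^2$ only up to the action of $\mA$, so what I really get for free is a term of size $-c\,\EE\|\mA^\top(x^{t+1} - x^*)\|^2$ for some explicit $c$ depending on $\min_j(p_j/\|\mA_j\|)^2$. Now invoke $\lambda_{\min}(\mA^\top\mA) > 0$: this gives $\|\mA^\top(x^{t+1} - x^*)\|^2 \ge \lambda_{\min}(\mA^\top\mA)\|x^{t+1} - x^*\|^2$ (here it is genuinely $\|x^{t+1}-x^*\|^2$, not a projection, because $\lambda_{\min}>0$ means $\mA^\top$ is injective, which as the paper notes forces $\sum_j d_j \le d$). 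Feeding this back converts the $-c\|\mA^\top(x^{t+1}-x^*)\|^2$ into $-c\lambda_{\min}(\mA^\top\mA)\|x^{t+1}-x^*\|^2 = -\rho_A\|x^{t+1}-x^*\|^2$, which can be moved to the left side to upgrade the coefficient of $\EE\|x^{t+1}-x^*\|^2$ from $1$ to $1+\rho_A$, i.e. after dividing, a contraction factor $(1+\rho_A)^{-1} \le 1 - \tfrac{\rho_A}{2}$ — or, with the bookkeeping arranged as in the statement, directly $1 - \min\{\rho,\omega\eta\mu,\rho_A\}$. Simultaneously, the $\mA_k$-range structure of the $y_k$ errors lets the $-\sum_k p_k\eta_k^2\|y_k^{t+1}-y_k^*\|^2$ term dominate a $\rho_A\cdot\cY^{t+1}$-sized piece, so that $\cY^t$ also contracts at rate $\rho_A$; the $\cM^t$ term contracts at rate $\rho$ by Assumption~\ref{as:method}(b) and the $\|x^t-x^*\|^2$ term at rate $\omega\eta\mu$. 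Taking the worst of the three rates and unrolling the recursion $\cL^{t+1} \le (1 - \min\{\rho,\omega\eta\mu,\rho_A\})\cL^t$ yields the claim, since $\EE\|x^t-x^*\|^2 \le \cL^t$.

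The main obstacle I anticipate is the bookkeeping that makes the per-step inequality actually \emph{close} — in particular, correctly pairing the firm-nonexpansiveness slack from the $\prox_{\eta_j g_j}$ step (which naturally produces cross terms in $z^t$, $y_j^t$, and $x^{t+1}$) with the SAGA variance-reduction identity $y_j^{t+1} = \tfrac{1}{\eta_j}(z^t + \eta_j y_j^t - x^{t+1})$ so that, after taking expectation over $j$, the telescoping works and the coefficient in front of $\EE\|\mA^\top(x^{t+1}-x^*)\|^2$ comes out to exactly $\min_j(p_j/\|\mA_j\|)^2$ rather than something weaker. Getting the right power of $\eta_j = \eta/(mp_j)$ here is what pins down the sampling dependence and, ultimately, feeds Corollary~\ref{cor:imp_sampl} on the optimality of $p_j \propto \|\mA_j\|$. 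A secondary subtlety is handling the degenerate branch of Assumption~\ref{as:method}(b) where $\cM^t \equiv 0$ (e.g. Gradient Descent), in which case $\rho$ should be read as $+\infty$ and $\min\{\rho,\omega\eta\mu,\rho_A\} = \min\{\omega\eta\mu,\rho_A\}$; the proof goes through verbatim with $\cM^t$ dropped from $\cL^t$.
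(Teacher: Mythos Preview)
Your proposal has a genuine gap: you have misidentified both the negative term that drives the $\rho_A$ contraction and the direction in which $\lambda_{\min}(\mA^\top\mA)$ enters.

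First, the eigenvalue inequality you invoke is false. You write $\|\mA^\top(x^{t+1}-x^*)\|^2 \ge \lambda_{\min}(\mA^\top\mA)\|x^{t+1}-x^*\|^2$ and justify it by saying ``$\lambda_{\min}>0$ means $\mA^\top$ is injective''. But $\lambda_{\min}(\mA^\top\mA)>0$ means $\mA$ is injective (full column rank), not $\mA^\top$; indeed the paper notes this forces $\sum_j d_j\le d$, so $\mA^\top:\RR^d\to\RR^{\sum d_j}$ has a nontrivial kernel and $\lambda_{\min}(\mA\mA^\top)=0$ in general. The correct inequality that $\lambda_{\min}(\mA^\top\mA)$ buys you is on the dual side: $\|\mA\beta\|^2\ge\lambda_{\min}(\mA^\top\mA)\|\beta\|^2$ for $\beta\in\RR^{\sum_j d_j}$.

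Second, and relatedly, the negative term produced by the key lemma is not a primal quantity like $-c\|\mA^\top(x^{t+1}-x^*)\|^2$. With $R\equiv 0$ the proximal step for $R$ is the identity, so $z^t-w^t=-\eta y^t$ and likewise $x^*-w^*=-\eta y^*$; the negative term in Lemma~\ref{lem:key} becomes exactly $-\eta^2\|y^t-y^*\|^2$. This is where the range structure matters: writing $y_k^t-y_k^*=\mA_k(\beta_k^t-\beta_k^*)$ gives $y^t-y^*=\tfrac{1}{m}\mA(\beta^t-\beta^*)$, and then
\[
\eta^2\|y^t-y^*\|^2=\tfrac{\eta^2}{m^2}\|\mA(\beta^t-\beta^*)\|^2\ge\lambda_{\min}(\mA^\top\mA)\tfrac{\eta^2}{m^2}\|\beta^t-\beta^*\|^2\ge\rho_A\,\cY^t,
\]
the last step using $\tfrac{\eta^2}{m^2}=p_k^2\eta_k^2$ and $\|y_k^t-y_k^*\|^2\le\|\mA_k\|^2\|\beta_k^t-\beta_k^*\|^2$. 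So $\rho_A$ contracts $\cY^t$, while the $(1-\omega\eta\mu)$ factor on $\|x^t-x^*\|^2$ comes entirely from Assumption~\ref{as:method}(b); there is no step where $\rho_A$ upgrades the coefficient on $\|x^{t+1}-x^*\|^2$.
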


\begin{corollary}\label{cor:imp_sampl}
    If oracle from Algorithm~\ref{alg:v_saga} (SAGA) is used with probabilities $p_j \propto \|\mA_j\|$, then to get $\EE\|x^t - x^*\|^2 \le \varepsilon$, it is enough to run it for \[ \compactify  \cO\left(\left(n+ \frac{L}{\mu}+ \frac{\|\mA\|_{2, 1}^2}{\lambda_{\min} (\mA^\top \mA)}\right)\log\frac{1}{\varepsilon}\right)\] iterations.
\end{corollary}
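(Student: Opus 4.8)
The plan is to specialize Theorem~\ref{th:lin_conv_lin_model} to the SAGA gradient estimator and then optimize the free sampling distribution $p=(p_1,\dots,p_m)$. By Lemma~\ref{lem:svrg_saga}, in the strongly convex case the SAGA oracle of Algorithm~\ref{alg:v_saga} satisfies Assumption~\ref{as:method} with $\eta_0=\nicefrac{1}{5L}$, $\omega=1$ and $\rho=\nicefrac{1}{3n}>0$, so Theorem~\ref{th:lin_conv_lin_model} applies. I would run the method with the largest admissible stepsize $\eta=\eta_0=\nicefrac{1}{5L}$, so that the per-iteration contraction factor is $1-\min\{\nicefrac{1}{3n},\ \nicefrac{\mu}{5L},\ \rho_A\}$, where $\rho_A=\lambda_{\min}(\mA^\top\mA)\min_j(\nicefrac{p_j}{\|\mA_j\|})^2$.

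Next I would choose the probabilities to make $\rho_A$ as large as possible. Since $\rho_A$ is increasing in $\min_j \nicefrac{p_j}{\|\mA_j\|}$, it suffices to maximize $\min_j \nicefrac{p_j}{\|\mA_j\|}$ over the simplex. The mediant inequality gives, for every probability vector $p$, $\min_j \nicefrac{p_j}{\|\mA_j\|}\le \nicefrac{\sum_j p_j}{\sum_j\|\mA_j\|}=\nicefrac{1}{\|\mA\|_{2,1}}$ (reading $\|\mA\|_{2,1}=\sum_{j=1}^m\|\mA_j\|$, the sum of the spectral norms of the blocks), with equality exactly when all ratios $\nicefrac{p_j}{\|\mA_j\|}$ are equal, i.e.\ $p_j\propto\|\mA_j\|$, equivalently $p_j=\nicefrac{\|\mA_j\|}{\|\mA\|_{2,1}}$. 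This both establishes the optimality of the sampling $p_j\propto\|\mA_j\|$ and yields, for that choice, $\rho_A=\nicefrac{\lambda_{\min}(\mA^\top\mA)}{\|\mA\|_{2,1}^2}$.

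Finally I would translate the linear rate into an iteration count: from Theorem~\ref{th:lin_conv_lin_model}, $\EE\|x^t-x^*\|^2\le\varepsilon$ holds once $t\ge \frac{1}{\min\{\rho,\ \omega\eta\mu,\ \rho_A\}}\log\frac{\cL^0}{\varepsilon}$, and since $\frac{1}{\min\{a,b,c\}}\le\frac1a+\frac1b+\frac1c$ this is at most $\bigl(3n+\frac{5L}{\mu}+\frac{\|\mA\|_{2,1}^2}{\lambda_{\min}(\mA^\top\mA)}\bigr)\log\frac{\cL^0}{\varepsilon}$; absorbing the universal constants and the fixed initialization-dependent quantity $\cL^0$ into $\cO(\cdot)$ and $\log\frac1\varepsilon$ gives the claim. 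The calculation is essentially routine once the ingredients are assembled; the only genuine content is the sampling-optimization step (the equalization/mediant argument), plus being consistent about the convention $\|\mA\|_{2,1}=\sum_j\|\mA_j\|$ so that the $\sum_j\|\mA_j\|$ produced by $p_j\propto\|\mA_j\|$ matches the norm in the statement. A minor bookkeeping point is that the logarithm really carries $\cL^0/\varepsilon$ rather than $1/\varepsilon$, which is harmless since $\cL^0$ is a finite constant fixed by $x^0$, $\{y_j^0\}$ and the SAGA memory, contributing only a suppressible additive $\cO(\log\cL^0)$.
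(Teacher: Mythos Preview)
Your argument is correct and is precisely the intended derivation: the paper does not spell out a separate proof for this corollary, but it follows immediately from Theorem~\ref{th:lin_conv_lin_model} together with the SAGA constants of Lemma~\ref{lem:svrg_saga}, and your mediant/equalization argument is exactly the right way to see that $p_j\propto\|\mA_j\|$ maximizes $\rho_A$ and yields $\rho_A=\lambda_{\min}(\mA^\top\mA)/\|\mA\|_{2,1}^2$ with $\|\mA\|_{2,1}=\sum_j\|\mA_j\|$. Your bookkeeping on $\cL^0$ inside the logarithm is also the correct caveat.
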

Now let us show that this can be improved to depend only on positive eigenvalues if the problem is linearly constrained.
\begin{theorem}[Proof in 	Appendix~\ref{ap:lin_constr}]\label{th:lin_constr}
	Under the same assumptions as in Thm~\ref{th:lin_conv_lin_model} and assuming, in addition, that $g_j = \ind_{\{x: \mA_j^\top x = b_j\}}$ it holds $\EE \|x^t - x^*\|^2 \le (1 - \min\{\rho, \omega\eta\mu, \rho_A\})^t\cL^0$ with $\rho_A=\lambda_{\min}^+(\mA^\top\mA)\min_{j}\left(\nicefrac{p_j}{\|\mA_j\|}\right)^2$, i.e., $\rho_A$ depends only on  the smallest positive eigenvalue of $\mA^\top\mA$.
\end{theorem}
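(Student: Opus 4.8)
The plan is to re-run the proof of Theorem~\ref{th:lin_conv_lin_model} essentially verbatim, modifying only one geometric inequality. Recall that that proof establishes a one-step contraction of the Lyapunov function $\cL^t = \EE\|x^t-x^*\|^2 + \cM^t + \cY^t$ by the factor $1-\min\{\rho,\omega\eta\mu,\rho_A\}$: the term $\EE\|x^t-x^*\|^2$ contracts at rate $\omega\eta\mu$ through $\mu$-strong convexity of $f$ and Assumption~\ref{as:method}(b), $\cM^t$ contracts at rate $\rho$, and the dual term $\cY^t = \sum_k \eta_k^2\EE\|y_k^t-y_k^*\|^2$ (note $\gamma = 0$ here, the $g_j$ being non-smooth) contracts at rate $\rho_A$. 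The factor $\lambda_{\min}(\mA^\top\mA)$ enters only in the last of these, through an inequality relating $\|\mA(\cdot)\|^2$ (equivalently $\|\mA^\top(\cdot)\|^2$) to $\|\cdot\|^2$ for a vector assembled from the dual residuals $y_1^t-y_1^*,\dots,y_m^t-y_m^*$; this is exactly the step that in general needs $\mA$ to be of full column rank.

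The new ingredient is a subspace-invariance property peculiar to affine-indicator terms. When $g_j = \ind_{\{x:\mA_j^\top x = b_j\}}$ we have $\proxj = \Pi_{\cC_j}$ with $\cC_j = \{x:\mA_j^\top x = b_j\}$, so the update $\eta_j y_j^{t+1} = (z^t+\eta_j y_j^t) - \Pi_{\cC_j}(z^t+\eta_j y_j^t)$ writes $y_j^{t+1}$ as a scalar multiple of the difference between a point and its orthogonal projection onto the affine subspace $\cC_j$; such a difference lies in $\Range{\mA_j}$, the orthogonal complement of the direction space $\ker(\mA_j^\top)$ of $\cC_j$. Initializing $y_j^0 \in \Range{\mA_j}$ (e.g.\ $y_j^0 = 0$), induction gives $y_j^t\in\Range{\mA_j}$ for all $t$ and all $j$; and since $\partial g_j(x^*) = \Range{\mA_j}$ at the feasible point $x^*$, one may choose the optimal multipliers $y_j^*$ of Assumption~\ref{as:optimality} inside $\Range{\mA_j}$ as well. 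Hence every residual $y_j^t-y_j^*$ lies in $\Range{\mA_j}\subseteq\Range{\mA}$, so the assembled residual vector lies in $\Range{\mA}=\ker(\mA^\top)^\perp$ (respectively, in $\ker(\mA)^\perp$ if one works with the stacked coordinate representation $y_j^t = \mA_j u_j^t$).

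On that subspace $\mA\mA^\top$ (respectively $\mA^\top\mA$) is positive definite with smallest eigenvalue $\lambda_{\min}^+(\mA^\top\mA)$, so the required bound holds with $\lambda_{\min}^+(\mA^\top\mA)$ in place of $\lambda_{\min}(\mA^\top\mA)$ and no full-rank hypothesis. Substituting this single inequality into the proof of Theorem~\ref{th:lin_conv_lin_model} and leaving everything else unchanged reproduces the same recursion with $\lambda_{\min}(\mA^\top\mA)$ replaced by $\lambda_{\min}^+(\mA^\top\mA)$ in $\rho_A$, which is the claim.

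The main obstacle is bookkeeping rather than conceptual. First, one must pin down the precise form in which $\lambda_{\min}(\mA^\top\mA)$ is invoked in the proof of Theorem~\ref{th:lin_conv_lin_model} and check that it is applied only to a vector that the argument above places in $\ker(\mA)^\perp$ — in particular, if that proof uses per-block representatives $y_j^t = \mA_j u_j^t$, one must verify that the relevant (minimal-norm) choice of the $u_j^t$, together with the freedom in choosing $y_j^*$ afforded by the $g_j$ being indicators, yields a stacked vector orthogonal to $\ker(\mA)$. Second, one must confirm that no other step of that proof silently used invertibility of $\mA^\top\mA$; this should hold, since the $x$- and $\cM$-parts of the contraction are entirely independent of $\mA$.
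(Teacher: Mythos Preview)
Your proposal is correct and takes essentially the same approach as the paper: reuse the proof of Theorem~\ref{th:lin_conv_lin_model} verbatim, replacing the single eigenvalue bound $\|\mA(\beta^t-\beta^*)\|^2\ge\lambda_{\min}(\mA^\top\mA)\|\beta^t-\beta^*\|^2$ by its $\lambda_{\min}^+$ analogue after arguing the dual residuals live in the appropriate subspace. Where you invoke the abstract fact that a projection residual onto an affine subspace lies in the orthogonal complement of its direction space, the paper writes out the closed form $\proxj(x)=x-\mA_j(\mA_j^\top\mA_j)^\dagger(\mA_j^\top x-b_j)$ to read off $\beta_j^t-\beta_j^*\in\Range{\mA_j^\top}$ and then asserts $\beta^t-\beta^*\in\Range{\mA^\top}$, which is exactly the $\ker(\mA)^\perp$ containment you flag as the key bookkeeping point.
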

One implication of Thm~\ref{th:lin_constr} is that just by taking a solver such as SVRG we immediately obtain a method for decentralized optimization that will converge linearly. Furthermore, if the problem is ill-conditioned or the communication graph is well conditioned, the leading term is still $\nicefrac{L}{\mu}$, meaning that the rate for decentralized method is the same as for centralized up to constant factors. In Appendix~\ref{ap:lin_constr}, we also give a version of our method specialized to the linearly constrained problem that requires only one extra vector, $y^t$.

\subsection{Linear convergence if all $g_j$ are smooth}
\begin{theorem}[Proof in Appendix~\ref{ap:lin_conv_smooth}]\label{th:lin_conv_smooth}
    Assume that $f$ is $L$-smooth and $\mu$-strongly convex, $g_j$ is $L_j$-smooth for all $j$, Assumption~\ref{as:method}(b) is satisfied and $\eta\le \eta_0$. Then, Algorithm~\ref{alg:sdm} converges as
\[
		\EE \|x^t - x^*\|^2
		\le \left(1 - \min\left\{\omega\eta\mu, \rho, \frac{\gamma}{m(1+\gamma)}\right\}\right)^t\cL^0,
\]   
    where $\gamma \eqdef \min_{j=1,\dotsc,m} (\eta_j L_j)^{-1}$.
\end{theorem}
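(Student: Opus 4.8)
The plan is to prove a one-step contraction of the Lyapunov function, $\cL^{t+1}\le(1-\theta)\cL^t$ with $\theta\eqdef\min\{\omega\eta\mu,\rho,\tfrac{\gamma}{m(1+\gamma)}\}$, and then unroll it and use $\EE\|x^t-x^*\|^2\le\cL^t$. The skeleton is essentially that of the general $\cO(1/t)$ bound (Theorem~\ref{th:1_over_t_rate}); the only new ingredient is that the $L_j$-smoothness of $g_j$ — equivalently, the $\tfrac1{L_j}$-cocoercivity of $\nabla g_j$, noting that $\partial g_j=\{\nabla g_j\}$ so $y_j^{t+1}=\nabla g_j(x^{t+1})$ and $y_j^*=\nabla g_j(x^*)$ — lets us carry the correct weight on the $\|y_j-y_j^*\|^2$ terms so that $\cY^t$ genuinely decays rather than merely being passed along.

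First I would record the fixed-point identities for the reference point: Assumption~\ref{as:optimality} gives $r^*\eqdef-\nabla f(x^*)-y^*\in\partial R(x^*)$ with $x^*=\prox_{\eta R}(w^*-\eta y^*)$, and differentiability of $g_j$ gives $x^*=\prox_{\eta_j g_j}(x^*+\eta_j y_j^*)$. Next I would handle the $R$-step: writing $r^t\eqdef\tfrac1\eta(w^t-\eta y^t-z^t)\in\partial R(z^t)$ and combining monotonicity of $\partial R$ with the identity $w^t-w^*=(z^t-x^*)+\eta(y^t-y^*)+\eta(r^t-r^*)$ yields
\[
  \|z^t-x^*\|^2 + 2\eta\<z^t-x^*,y^t-y^*> \le \|w^t-w^*\|^2 - \eta^2\bigl\|(y^t-y^*)+(r^t-r^*)\bigr\|^2 \le \|w^t-w^*\|^2 .
\]
For the $g_j$-step I would use the identity $z^t=x^{t+1}+\eta_j(y_j^{t+1}-y_j^t)$ (which is just the definition of $y_j^{t+1}$) to rewrite $\bigl\|(z^t-x^*)+\eta_j(y_j^t-y_j^*)\bigr\|^2=\bigl\|(x^{t+1}-x^*)+\eta_j(y_j^{t+1}-y_j^*)\bigr\|^2$, expand it, and invoke cocoercivity of $\nabla g_j$ in the form $\<x^{t+1}-x^*,y_j^{t+1}-y_j^*>\ge\tfrac1{L_j}\|y_j^{t+1}-y_j^*\|^2\ge\gamma\eta_j\|y_j^{t+1}-y_j^*\|^2$, giving $\|x^{t+1}-x^*\|^2+(1+2\gamma)\,\eta_j^2\|y_j^{t+1}-y_j^*\|^2\le\bigl\|(z^t-x^*)+\eta_j(y_j^t-y_j^*)\bigr\|^2$.

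Then I would take conditional expectation over the sampled index $j$ (drawn after $z^t$ is formed). With $\eta_j=\eta/(mp_j)$ and $y^t=\tfrac1m\sum_k y_k^t$, the cross term averages as $\EE_j[2\eta_j\<z^t-x^*,y_j^t-y_j^*>]=2\eta\<z^t-x^*,y^t-y^*>$, so substituting the $R$-step bound cancels both $\|z^t-x^*\|^2$ and this cross term, and dropping the nonnegative leftovers leaves
\[
  \EE_j\|x^{t+1}-x^*\|^2 + (1+\gamma)\,\EE_j\bigl[\eta_j^2\|y_j^{t+1}-y_j^*\|^2\bigr] \le \|w^t-w^*\|^2 + \EE_j\bigl[\eta_j^2\|y_j^t-y_j^*\|^2\bigr].
\]
The SAGA-style bookkeeping for the dual table — coordinate $k$ is refreshed with probability $p_k$ and otherwise kept — rewrites $\EE_j\cY^{t+1}=(1+\gamma)\EE_j[\eta_j^2\|y_j^{t+1}-y_j^*\|^2]+(1+\gamma)\sum_k(1-p_k)\eta_k^2\|y_k^t-y_k^*\|^2$; for the uniform choice $p_k=\tfrac1m$ one has $\EE_j[\eta_j^2\|y_j^t-y_j^*\|^2]=\tfrac1{m(1+\gamma)}\cY^t$ and the kept coordinates contribute $(1-\tfrac1m)\cY^t$, and since $(1-\tfrac1m)+\tfrac1{m(1+\gamma)}=1-\tfrac{\gamma}{m(1+\gamma)}$ this gives $\EE\|x^{t+1}-x^*\|^2+\EE\cY^{t+1}\le\EE\|w^t-w^*\|^2+(1-\tfrac{\gamma}{m(1+\gamma)})\cY^t$. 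Adding $\cM^{t+1}$ and invoking Assumption~\ref{as:method}(b), $\EE\|w^t-w^*\|^2+\cM^{t+1}\le(1-\omega\eta\mu)\|x^t-x^*\|^2+(1-\rho)\cM^t$, then taking the minimum of the three decay factors yields $\cL^{t+1}\le(1-\theta)\cL^t$.

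The main obstacle is the careful bookkeeping of the cross terms and conditional expectations: $z^t$ depends on $x^t$, on the internal randomness of the gradient oracle (for SVRG/SAGA), and on the whole dual table through $y^t$, while the $g_j$-step brings in $y_j^t$ and $y_j^{t+1}$, so one must order the conditioning (oracle randomness versus the draw of $j$) so that every inner product vanishes and Assumption~\ref{as:method}(b) is applied against the correct $\sigma$-algebra. Making the constants $1+\gamma$ and $\tfrac{\gamma}{m(1+\gamma)}$ come out exactly — rather than up to a factor — is the one place where the stepsize coupling $\eta_j=\eta/(mp_j)$, the definition $\gamma=\min_j(\eta_jL_j)^{-1}$, and the $(1+\gamma)$ weight built into $\cY^t$ must all be used at once.
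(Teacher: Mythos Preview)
Your argument is correct and follows essentially the same route as the paper: the paper's own proof simply invokes its key lemma (Lemma~\ref{lem:key}), which already yields $\EE[\|x^{t+1}-x^*\|^2+\cY^{t+1}]\le\EE\|w^t-w^*\|^2+(1-\tfrac{\gamma}{m(1+\gamma)})\cY^t$, and then adds Assumption~\ref{as:method}(b) to get the three-way contraction of $\cL^t$. What you have done is re-derive that key lemma inline, using monotonicity of $\partial R$ and cocoercivity of $\nabla g_j$ in place of the paper's firm-nonexpansiveness inequality~\eqref{eq:nonexp}; these are equivalent tools, and your explicit restriction to $p_k=\tfrac1m$ in the bookkeeping step is consistent with the constant $\tfrac{\gamma}{m(1+\gamma)}$ appearing in the theorem.
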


Based on the theorem above, we suggest to choose probabilities $p_j$ to maximize $\gamma$, which can be done by using $p_j\propto L_j$. If $p_j = \frac{L_j}{\sum_{k=1}^m L_k}$, then $\gamma = \min_{j=1,\dotsc,m} \frac{m p_j}{\eta L_j} = \frac{1}{\eta \overline L}$ with $\overline L\eqdef \frac{1}{m}\sum_{j=1}^m L_j $.
\begin{corollary}[Proof in Appendix~\ref{ap:acc_in_g}]\label{cor:acc_in_g}
	Choose as solver for $f$ SVRG or SAGA without minibatching, which satisfy Assumption~\ref{as:method} with $\eta_0=\nicefrac{1}{5L}$ and $\rho = \nicefrac{1}{3n}$, and consider for simplicity situation where $L_1= \dotsb = L_m \eqdef L_g$ and $p_1=\dotsb=p_m$. Define $\eta_{best} \eqdef (\omega \mu m L_g)^{-\nicefrac{1}{2}}$,
	and set the stepsize to  $\eta=\min\{\eta_0, \eta_{best}\}$. Then the complexity to get $\EE\|x^t - x^*\|^2\le \varepsilon$ is
\[
 \compactify 
		\cO\left(\left(n + m + \frac{L}{\mu} + \sqrt{\frac{mL_g}{\mu}}\right) \log\frac{1}{\varepsilon}\right).
\]
\end{corollary}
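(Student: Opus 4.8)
The plan is to start from the linear rate of Theorem~\ref{th:lin_conv_smooth} and optimize over the stepsize. Under the stated simplifications $L_1=\dots=L_m\eqdef L_g$ and $p_1=\dots=p_m=\nicefrac1m$ we have $\eta_j=\eta/(mp_j)=\eta$, so $\gamma=\min_j(\eta_jL_j)^{-1}=1/(\eta L_g)$ and hence
\[
\frac{\gamma}{m(1+\gamma)}=\frac{1}{m(1+1/\gamma)}=\frac{1}{m(1+\eta L_g)}.
\]
Since SVRG/SAGA without minibatching satisfy Assumption~\ref{as:method}(b) with $\omega=1$ and $\rho=\Theta(\nicefrac1n)$ in the strongly convex case (Lemma~\ref{lem:svrg_saga}), Theorem~\ref{th:lin_conv_smooth} gives a per-iteration contraction factor $1-q$ with $q=\min\bigl\{\mu\eta,\ \Theta(\nicefrac1n),\ 1/(m(1+\eta L_g))\bigr\}$. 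As $(1-q)^t\cL^0\le\varepsilon$ holds once $t\ge q^{-1}\log(\cL^0/\varepsilon)$, and $\cL^0$ depends only on the initialization (the $\cM^0,\cY^0$ terms are problem/start-point constants), the iteration complexity is
\[
\cO\!\left(\max\Bigl\{\tfrac{1}{\mu\eta},\ n,\ m(1+\eta L_g)\Bigr\}\log\tfrac1\varepsilon\right).
\]

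Next I would carry out the stepsize optimization. The first term is decreasing in $\eta$ and the third increasing, so the unconstrained optimum balances $1/(\mu\eta)$ against $m\eta L_g$, yielding exactly $\eta=\eta_{best}=(\mu m L_g)^{-1/2}$, at which both equal $\sqrt{mL_g/\mu}$ and $m(1+\eta_{best}L_g)=m+\sqrt{mL_g/\mu}$. Respecting $\eta\le\eta_0=\nicefrac{1}{5L}$ forces $\eta=\min\{\eta_0,\eta_{best}\}$, and I would split into two cases according to which is smaller. If $\eta_{best}\le\eta_0$ (equivalently $25L^2\le\mu m L_g$), then $\eta=\eta_{best}$, the complexity is $\cO((n+m+\sqrt{mL_g/\mu})\log\nicefrac1\varepsilon)$, and the case hypothesis $L^2\le\mu m L_g$ gives $L/\mu\le\sqrt{mL_g/\mu}$, so this is within the claimed bound. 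If instead $\eta_{best}>\eta_0$ (equivalently $mL_g<25L^2/\mu$), then $\eta=\eta_0$, so $1/(\mu\eta_0)=5L/\mu$ and $m(1+\eta_0L_g)=m+mL_g/(5L)\le m+L/\mu$ by the case hypothesis, while also $\sqrt{mL_g/\mu}<5L/\mu$; hence the complexity is $\cO((n+m+L/\mu)\log\nicefrac1\varepsilon)$, again within the claimed bound. Combining the two cases gives the stated $\cO\bigl((n+m+\nicefrac{L}{\mu}+\sqrt{mL_g/\mu})\log\nicefrac1\varepsilon\bigr)$.

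I do not expect a genuine obstacle here: the proof is a one-parameter stepsize optimization followed by two elementary case checks. The only points needing care are (i) keeping the standalone $m$ term visible — it originates from the ``$+1$'' in $m(1+\eta L_g)$ and must not be discarded when $\eta L_g$ is small — and (ii) verifying in each regime that the inactive members of $\{n,\ m,\ \nicefrac{L}{\mu},\ \sqrt{mL_g/\mu}\}$ are genuinely dominated, which is exactly where the defining inequality of the case ($25L^2\lessgtr\mu m L_g$) enters. One should also record that $\omega=1$ and $\rho=\Theta(\nicefrac1n)$ are precisely the constants furnished by Lemma~\ref{lem:svrg_saga} in the strongly convex regime, so no hidden dependence on $\omega$ or on the SAGA/SVRG memory is swept under the rug.
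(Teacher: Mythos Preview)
Your proposal is correct and follows essentially the same route as the paper: invoke the linear rate of Theorem~\ref{th:lin_conv_smooth}, simplify $\gamma=1/(\eta L_g)$ under the stated symmetry, balance $1/(\mu\eta)$ against $m\eta L_g$ to get $\eta_{best}$, and split into the two cases $\eta_{best}\lessgtr\eta_0$. One tiny arithmetic slip: in your second case, the hypothesis $mL_g<25L^2/\mu$ gives $mL_g/(5L)<5L/\mu$, not $\le L/\mu$; this is harmless for the $\cO$-bound but worth correcting.
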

Notably, the rate in Cor~\ref{cor:acc_in_g} is accelerated in $g$, suggesting that the proposed update is in some cases optimal. Moreover, if $m$ becomes large, the last term is dominating everything else meaning that acceleration in $f$ might not be needed at all.

\section{Implementation Details and Experiments}

{\bf Randomly generated linear system.}
In this experiment, we first generate a matrix with independent Gaussian entries of zero mean and scale $\nicefrac{1}{\sqrt{d}}$, where $d=100$, and after that we set $\mW\in\RR^{d\times d}$ to be the product of the generated matrix with itself plus identity matrix with coefficient $10^{-2}$ to make sure $\mW$ is positive definite. We also generated a random vector $x^*\in\RR^d$ and took $b = \mW x^*$. The problem is to solve $\mW x=b$, or, equivalently, to minimize $\|\mW x - b\|^2$. We made this choice because it makes estimation of the parameters of accelerated Sketch-and-Project easier.

To run our method, we choose \[f(x) = \frac{1}{2}\|x\|^2\] and \[g_j(x) = \ind_{ \{x: w_j^\top x = b_j\}}(x), \quad j=1,\dotsc, d,\] where $\ind_{\{x \;:\; w_j^\top x=b_j\}}(x)$ is the characteristic function, whose value is $0$ if $w_j^\top x=b_j$ and $+\infty$ otherwise. Then, the proximal operator of $g_j$ is the projection operator onto the corresponding constraint. We found that the choice of stepsize is important for fast convergence and that the value approximately equal $1.3\cdot 10^{-4}\ll 1 = \nicefrac{2}{(L + \mu)}$ led to the best performance for this matrix. 

We compare our method to the accelerated Sketch-and-Project method of~\cite{gower2018accelerated} using optimal parameters. The other method that we consider is classic Kaczmarz method that projects onto randomly chosen constraint. We run all methods with uniform sampling.

\textbf{Linear regression with linear constraints.} We took A9a dataset from LIBSVM and ran $\ell_2$-regularized linear regression, using first 50 observations of the dataset as tough constraints. We compare iteration complexity to precise projection onto all constraints and observe that it takes almost the same number of iterations, although stochastic iterations are significantly cheaper. For each method we chose minibatch of size 20 and stepsizes of order $\nicefrac{1}{L}$ for all methods.

More experiments are provided in Appendix~\ref{sec:add_experiments}.
\begin{figure}
\center
	\includegraphics[scale=0.25]{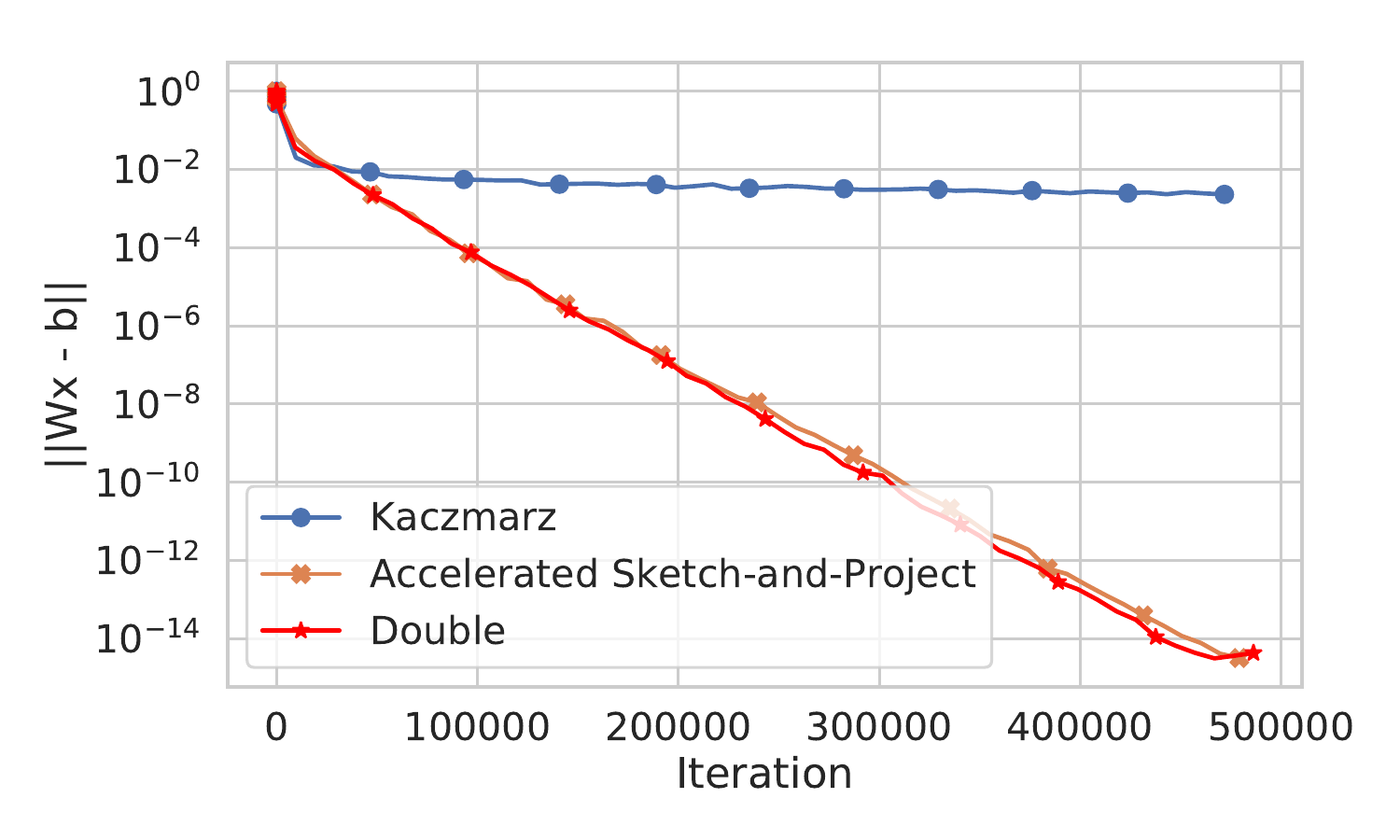}
	\includegraphics[scale=0.23]{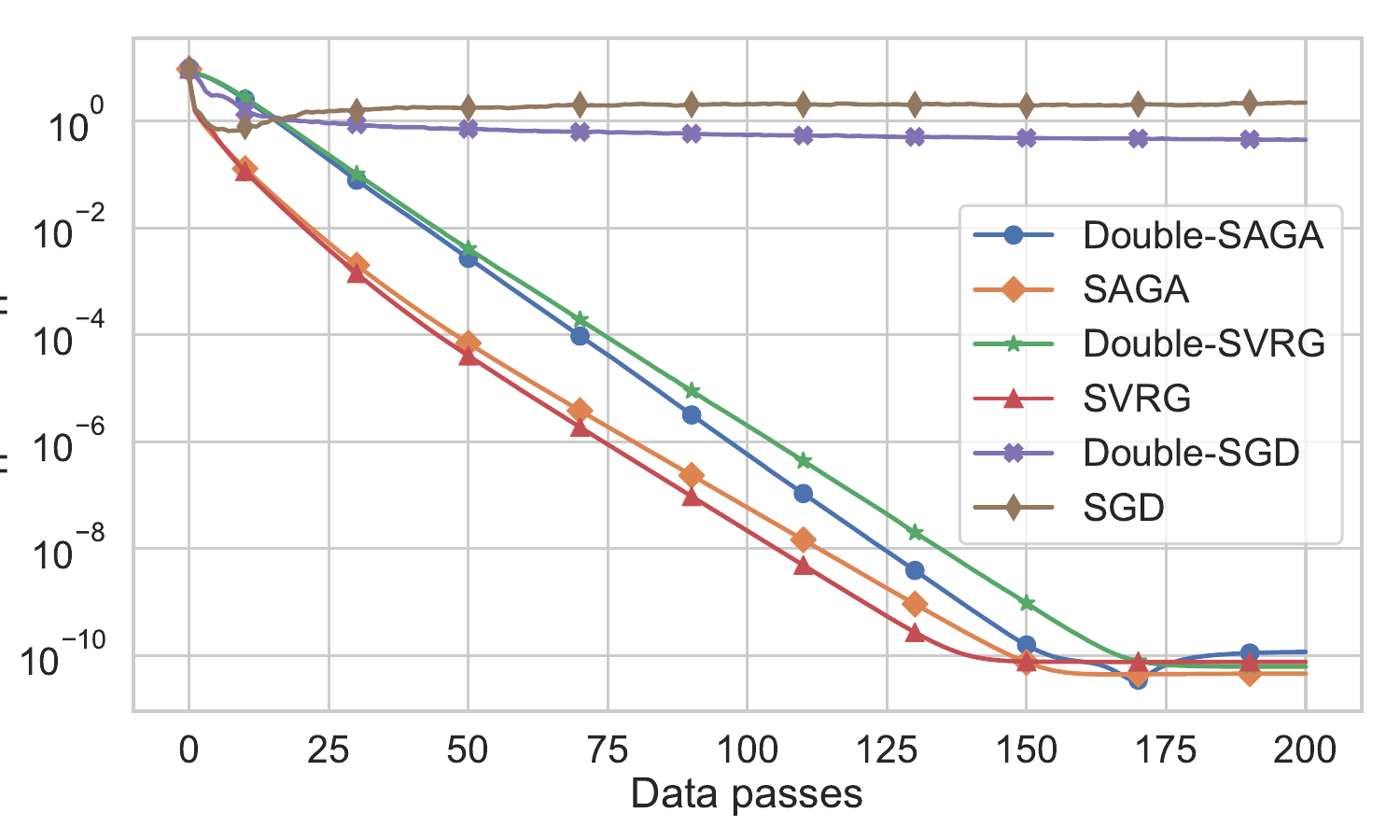}
	\caption{Left: convergence of the Stochastic Decoupling method, Kaczmarz and accelerated Kaczmarz of~\cite{gower2018accelerated} when solving $\mW x = b$ with random positive-definite $\mW\in\RR^{d\times d}$, where $d=100$. It is immediate to observe that the method we propose performs on a par with the accelerated Sketch-and-Project. Right: linear regression with A9a dataset from LIBSVM~\cite{chang2011libsvm} with first 50 observation used as linear constraints. We compare convergence of SVRG, SAGA and SGD with full projections (labeled as 'SVRG', 'SAGA', 'SGD') to the same methods combined with Algorithm~\ref{alg:sdm} (labeled as 'Double-').}
\end{figure}

\bibliographystyle{plain}
\bibliography{sdm.bib}

\begin{thebibliography}{10}

\bibitem{alacaoglu2017smooth}
Ahmet Alacaoglu, Quoc~Tran Dinh, Olivier Fercoq, and Volkan Cevher.
\newblock Smooth primal-dual coordinate descent algorithms for nonsmooth convex
  optimization.
\newblock In {\em Advances in Neural Information Processing Systems},
  volume~30, pages 5852--5861, 2017.

\bibitem{allen2017katyusha}
Zeyuan Allen-Zhu.
\newblock Katyusha: The first direct acceleration of stochastic gradient
  methods.
\newblock {\em The Journal of Machine Learning Research}, 18(1):8194--8244,
  2017.

\bibitem{BauBor:96}
Heinz~H. Bauschke and Jonathan~M. Borwein.
\newblock On projection algorithms for solving convex feasibility problems.
\newblock {\em SIAM Review}, 38(3):367--426, 1996.

\bibitem{beck-book-first-order}
Amir Beck.
\newblock {\em First-order Methods in Optimization}.
\newblock MOS-SIAM Series on Optimization, 2017.

\bibitem{belloni2011square}
Alexandre Belloni, Victor Chernozhukov, and Lie Wang.
\newblock Square-root lasso: pivotal recovery of sparse signals via conic
  programming.
\newblock {\em Biometrika}, 98(4):791--806, 2011.

\bibitem{bertsekas2015convex}
Dimitri~P. Bertsekas.
\newblock {\em Convex optimization algorithms}.
\newblock Athena Scientific Belmont, 2015.

\bibitem{bianchi2015coordinate}
Pascal Bianchi, Walid Hachem, and Franck Iutzeler.
\newblock A coordinate descent primal-dual algorithm and application to
  distributed asynchronous optimization.
\newblock {\em IEEE Transactions on Automatic Control}, 61(10):2947--2957,
  2015.

\bibitem{bredies2010total}
Kristian Bredies, Karl Kunisch, and Thomas Pock.
\newblock Total generalized variation.
\newblock {\em SIAM Journal on Imaging Sciences}, 3(3):492--526, 2010.

\bibitem{candes2007dantzig}
Emmanuel Cand\`{e}s and Terence Tao.
\newblock The {D}antzig selector: Statistical estimation when $p$ is much
  larger than $n$.
\newblock {\em The Annals of Statistics}, 35(6):2313--2351, 2007.

\bibitem{candes2006robust}
Emmanuel~J. Cand{\`e}s, Justin Romberg, and Terence Tao.
\newblock Robust uncertainty principles: exact signal reconstruction from
  highly incomplete frequency information.
\newblock {\em IEEE Transactions on Information Theory}, 52(2):489--509, 2006.

\bibitem{cevher2018stochastic}
Volkan Cevher, B{\u{a}}ng~C. V{\~u}, and Alp Yurtsever.
\newblock Stochastic forward-{D}ouglas-{R}achford splitting for monotone
  inclusions.
\newblock Technical report, Springer International Publishing, 2018.

\bibitem{chambolle2018stochastic}
Antonin Chambolle, Matthias~J. Ehrhardt, Peter Richt{\'a}rik, and
  Carola-Bibiane Sch{\"o}nlieb.
\newblock Stochastic primal-dual hybrid gradient algorithm with arbitrary
  sampling and imaging applications.
\newblock {\em SIAM Journal on Optimization}, 28(4):2783--2808, 2018.

\bibitem{chang2011libsvm}
Chih-Chung Chang and Chih-Jen Lin.
\newblock Lib{SVM}: {A} library for support vector machines.
\newblock {\em ACM Transactions on Intelligent Systems and Technology (TIST)},
  2(3):27, 2011.

\bibitem{combettes2015stochastic}
Patrick~L. Combettes and Jean-Christophe Pesquet.
\newblock Stochastic quasi-fej{\'e}r block-coordinate fixed point iterations
  with random sweeping.
\newblock {\em SIAM Journal on Optimization}, 25(2):1221--1248, 2015.

\bibitem{FB2011}
Patrick~Louis Combettes and Jean-Christophe Pesquet.
\newblock {\em Fixed-Point Algorithms for Inverse Problems in Science and
  Engineering}, chapter Proximal splitting methods in signal processing, pages
  185--212.
\newblock Springer Optimization and Its Applications. Springer, 2011.

\bibitem{condat2013primal}
Laurent Condat.
\newblock A primal--dual splitting method for convex optimization involving
  {L}ipschitzian, proximable and linear composite terms.
\newblock {\em Journal of Optimization Theory and Applications},
  158(2):460--479, 2013.

\bibitem{cortes1995support}
Corinna Cortes and Vladimir Vapnik.
\newblock Support-vector networks.
\newblock {\em Machine Learning}, 20(3):273--297, 1995.

\bibitem{defazio2016simple}
Aaron Defazio.
\newblock A simple practical accelerated method for finite sums.
\newblock In {\em Advances in Neural Information Processing Systems},
  volume~29, pages 676--684, 2016.

\bibitem{defazio2014saga}
Aaron Defazio, Francis Bach, and Simon Lacoste-Julien.
\newblock {SAGA}: a fast incremental gradient method with support for
  non-strongly convex composite objectives.
\newblock In {\em Advances in Neural Information Processing Systems},
  volume~27, pages 1646--1654, 2014.

\bibitem{deng2017parallel}
Wei Deng, Ming-Jun Lai, Zhimin Peng, and Wotao Yin.
\newblock Parallel multi-block {ADMM} with $o(1/k)$ convergence.
\newblock {\em Journal of Scientific Computing}, 71(2):712--736, 2017.

\bibitem{du2018linear}
Simon~S. Du and Wei Hu.
\newblock Linear convergence of the primal-dual gradient method for
  convex-concave saddle point problems without strong convexity.
\newblock In {\em The 22nd International Conference on Artificial Intelligence
  and Statistics}, pages 196--205. PMLR, 2019.

\bibitem{duchi2009efficient}
John Duchi and Yoram Singer.
\newblock Efficient online and batch learning using forward backward splitting.
\newblock {\em Journal of Machine Learning Research}, 10(Dec):2899--2934, 2009.

\bibitem{dykstra1983algorithm}
Richard~L. Dykstra.
\newblock An algorithm for restricted least squares regression.
\newblock {\em Journal of the American Statistical Association},
  78(384):837--842, 1983.

\bibitem{ehrhardt2017faster}
Matthias~J. Ehrhardt, Pawel Markiewicz, Antonin Chambolle, Peter Richt{\'a}rik,
  Jonathan Schott, and Carola-Bibiane Sch{\"o}nlieb.
\newblock Faster {PET} reconstruction with a stochastic primal-dual hybrid
  gradient method.
\newblock In {\em Wavelets and Sparsity XVII}, volume 10394, page 103941O.
  International Society for Optics and Photonics, 2017.

\bibitem{gower2018accelerated}
Robert~M. Gower, Filip Hanzely, Peter Richt{\'a}rik, and Sebastian~U. Stich.
\newblock Accelerated stochastic matrix inversion: general theory and speeding
  up {BFGS} rules for faster second-order optimization.
\newblock In {\em Advances in Neural Information Processing Systems},
  volume~31, pages 1619--1629, 2018.

\bibitem{gower2015randomized}
Robert~M. Gower and Peter Richt{\'a}rik.
\newblock Randomized iterative methods for linear systems.
\newblock {\em SIAM Journal on Matrix Analysis and Applications},
  36(4):1660--1690, 2015.

\bibitem{gower2018stochastic}
Robert~M. Gower, Peter Richt{\'a}rik, and Francis Bach.
\newblock Stochastic quasi-gradient methods: variance reduction via {Jacobian}
  sketching.
\newblock {\em Mathematical Programming}, pages 1--58, 2020.

\bibitem{hanzely2018sega}
Filip Hanzely, Konstantin Mishchenko, and Peter Richt{\'a}rik.
\newblock {SEGA}: variance reduction via gradient sketching.
\newblock In {\em Advances in Neural Information Processing Systems},
  volume~31, pages 2082--2093, 2018.

\bibitem{hofmann2015variance}
Thomas Hofmann, Aurelien Lucchi, Simon Lacoste-Julien, and Brian McWilliams.
\newblock Variance reduced stochastic gradient descent with neighbors.
\newblock In {\em Advances in Neural Information Processing Systems},
  volume~28, pages 2305--2313, 2015.

\bibitem{SVRG}
Rie Johnson and Tong Zhang.
\newblock Accelerating stochastic gradient descent using predictive variance
  reduction.
\newblock In {\em Advances in Neural Information Processing Systems},
  volume~26, pages 315--323, 2013.

\bibitem{karczmarz1937angenaherte}
Stefan Kaczmarz.
\newblock Angenaherte auflosung von systemen linearer glei-chungen.
\newblock {\em Bulletin International de l’Acadt{\'e}mie Polonaise des
  Sciences et des Lettres, Classe des Sciences Math{\'e}matiques et
  Naturelles}, pages 355--357, 1937.

\bibitem{kim2009ell_1}
Seung-Jean Kim, Kwangmoo Koh, Stephen Boyd, and Dimitry Gorinevsky.
\newblock $\ell_1$ trend filtering.
\newblock {\em SIAM Review}, 51(2):339--360, 2009.

\bibitem{L-SVRG}
Dmitry Kovalev, Samuel Horv{\'a}th, and Peter Richt{\'a}rik.
\newblock Don’t jump through hoops and remove those loops: {SVRG} and
  {Katyusha} are better without the outer loop.
\newblock In {\em Algorithmic Learning Theory}, pages 451--467. PMLR, 2020.

\bibitem{lei2017less}
Lihua Lei and Michael Jordan.
\newblock Less than a single pass: Stochastically controlled stochastic
  gradient.
\newblock In {\em Artificial Intelligence and Statistics}, pages 148--156,
  2017.

\bibitem{lei2017non}
Lihua Lei, Cheng Ju, Jianbo Chen, and Michael~I. Jordan.
\newblock Non-convex finite-sum optimization via {SCSG} methods.
\newblock In {\em Advances in Neural Information Processing Systems},
  volume~30, pages 2348--2358, 2017.

\bibitem{DRsplitting}
Pierre-Louis Lions and Bertrand Mercier.
\newblock Splitting algorithms for the sum of two nonlinear operators.
\newblock {\em SIAM Journal on Numerical Analysis}, 16(6):964--979, 1979.

\bibitem{liu2016accelerated}
Ji~Liu and Stephen Wright.
\newblock An accelerated randomized {K}aczmarz algorithm.
\newblock {\em Mathematics of Computation}, 85(297):153--178, 2016.

\bibitem{liu2017accelerated}
Yuanyuan Liu, Fanhua Shang, and James Cheng.
\newblock Accelerated variance reduced stochastic {ADMM}.
\newblock In {\em Thirty-First AAAI Conference on Artificial Intelligence},
  2017.

\bibitem{mishchenko2018stochastic}
Konstantin Mishchenko and Peter Richt{\'a}rik.
\newblock A stochastic penalty model for convex and nonconvex optimization with
  big constraints.
\newblock {\em arXiv preprint arXiv:1810.13387}, 2018.

\bibitem{Nedic2009distributed}
Angelia Nedi{\'{c}} and Asuman \"{O}zda\u{g}lar.
\newblock Distributed subgradient methods for multi-agent optimization.
\newblock {\em IEEE Transactions on Automatic Control}, 54(1):48--61, 2009.

\bibitem{Nesterov_composite2013}
Yurii Nesterov.
\newblock Gradient methods for minimizing composite functions.
\newblock {\em Mathematical Programming}, 140(1):125--161, 2013.

\bibitem{nguyen2018inexact}
Lam~M. Nguyen, Katya Scheinberg, and Martin Tak{\'a}{\v{c}}.
\newblock Inexact {SARAH} algorithm for stochastic optimization.
\newblock {\em Optimization Methods and Software}, 36(1):237--258, 2021.

\bibitem{Nocedal-Wright-book-2006}
Jorge Nocedal and Stephen~J. Wright.
\newblock {\em Numerical Optimization}.
\newblock Springer Series in Operations Research and Financial Engineering.
  Springer-Verlag New York, 2006.

\bibitem{palaniappan2016stochastic}
Balamurugan Palaniappan and Francis Bach.
\newblock Stochastic variance reduction methods for saddle-point problems.
\newblock In {\em Advances in Neural Information Processing Systems},
  volume~29, pages 1416--1424, 2016.

\bibitem{parikh2014proximal}
Neal Parikh and Stephen Boyd.
\newblock Proximal algorithms.
\newblock {\em Foundations and Trends{\textregistered} in Optimization},
  1(3):127--239, 2014.

\bibitem{pedregosa2019proximal}
Fabian Pedregosa, Kilian Fatras, and Mattia Casotto.
\newblock Proximal splitting meets variance reduction.
\newblock In {\em The 22nd International Conference on Artificial Intelligence
  and Statistics}, pages 1--10, 2019.

\bibitem{SAGA-AS}
Xun Qian, Zheng Qu, and Peter Richt\'{a}rik.
\newblock {SAGA} with arbitrary sampling.
\newblock In {\em The 36th International Conference on Machine Learning}, 2019.

\bibitem{richtarik2017stochastic}
Peter Richt{\'a}rik and Martin Tak{\'a}{\v{c}}.
\newblock Stochastic reformulations of linear systems: Algorithms and
  convergence theory.
\newblock {\em SIAM Journal on Matrix Analysis and Applications},
  41(2):487--524, 2020.

\bibitem{rockafellar2015convex}
Ralph~Tyrell Rockafellar.
\newblock {\em Convex analysis}.
\newblock Princeton university press, 2015.

\bibitem{sag}
Nicolas~Le Roux, Mark Schmidt, and Francis Bach.
\newblock A stochastic gradient method with an exponential convergence rate for
  finite training sets.
\newblock In {\em Advances in Neural Information Processing Systems},
  volume~25, pages 2663--2671, 2012.

\bibitem{ryu2017proximal}
Ernest~K. Ryu and Wotao Yin.
\newblock Proximal-proximal-gradient method.
\newblock {\em Journal of Computational Mathematics}, 37(6), 2019.

\bibitem{Pegasos}
Shai Shalev-Shwartz, Yoram Singer, and Nathan Srebro.
\newblock Pegasos: {P}rimal estimated subgradient solver for {SVM}.
\newblock In {\em 24th International Conference on Machine Learning}, pages
  807--814, 2007.

\bibitem{Pegasos-MAPR}
Shai Shalev-Shwartz, Yoram Singer, Nathan Srebro, and Andrew Cotter.
\newblock Pegasos: {P}rimal estimated sub-gradient solver for {SVM}.
\newblock {\em Mathematical Programming}, 127(1):3--30, 2011.

\bibitem{shalev2013stochastic}
Shai Shalev-Shwartz and Tong Zhang.
\newblock Stochastic dual coordinate ascent methods for regularized loss
  minimization.
\newblock {\em Journal of Machine Learning Research}, 14(Feb):567--599, 2013.

\bibitem{RK}
Thomas Strohmer and Roman Vershynin.
\newblock A randomized {K}aczmarz algorithm with exponential convergence.
\newblock {\em Journal of Fourier Analysis and Applications}, 15(2):262--278,
  2009.

\bibitem{Pegasos2}
Martin Tak{\'a}\v{c}, Avleen Bijral, Peter Richt{\'a}rik, and Nathan Srebro.
\newblock Mini-batch primal and dual methods for {SVM}s.
\newblock In {\em 30th International Conference on Machine Learning}, pages
  537--552, 2013.

\bibitem{tibshirani2005sparsity}
Robert Tibshirani, Michael Saunders, Saharon Rosset, Ji~Zhu, and Keith Knight.
\newblock Sparsity and smoothness via the fused lasso.
\newblock {\em Journal of the Royal Statistical Society: Series B (Statistical
  Methodology)}, 67(1):91--108, 2005.

\bibitem{tu2017breaking}
Stephen Tu, Shivaram Venkataraman, Ashia~C. Wilson, Alex Gittens, Michael~I.
  Jordan, and Benjamin Recht.
\newblock Breaking locality accelerates block {G}auss-{S}eidel.
\newblock In {\em Proceedings of the 34th International Conference on Machine
  Learning}, volume~70, pages 3482--3491. PMLR, 2017.

\bibitem{vo2019unsupervised}
Huy~V. Vo, Francis Bach, Minsu Cho, Kai Han, Yann LeCun, Patrick P{\'e}rez, and
  Jean Ponce.
\newblock Unsupervised image matching and object discovery as optimization.
\newblock In {\em Proceedings of the IEEE/CVF Conference on Computer Vision and
  Pattern Recognition}, pages 8287--8296, 2019.

\bibitem{vu2013splitting}
B{\u{a}}ng~C. V{\~u}.
\newblock A splitting algorithm for dual monotone inclusions involving
  cocoercive operators.
\newblock {\em Advances in Computational Mathematics}, 38(3):667--681, 2013.

\bibitem{yu2017fast}
Yue Yu and Longbo Huang.
\newblock Fast stochastic variance reduced admm for stochastic composition
  optimization.
\newblock In {\em Proceedings of the 26th International Joint Conference on
  Artificial Intelligence}, pages 3364--3370, 2017.

\bibitem{yuan2006model}
Ming Yuan and Yi~Lin.
\newblock Model selection and estimation in regression with grouped variables.
\newblock {\em Journal of the Royal Statistical Society: Series B (Statistical
  Methodology)}, 68(1):49--67, 2006.

\bibitem{yurtsever2016stochastic}
Alp Yurtsever, B{\u{a}}ng~C. V{\~u}, and Volkan Cevher.
\newblock Stochastic three-composite convex minimization.
\newblock In {\em Advances in Neural Information Processing Systems},
  volume~29, pages 4329--4337, 2016.

\bibitem{zheng2016fast}
Shuai Zheng and James~T. Kwok.
\newblock Fast-and-light stochastic {ADMM}.
\newblock In {\em International Joint Conferences on Artificial Intelligence
  Organization}, pages 2407--2613, 2016.

\end{thebibliography}
\clearpage
\appendix

\part*{Supplementary Material\\
\large A Stochastic Decoupling Method for Minimizing the Sum of Smooth and Non-Smooth Functions}

{\footnotesize
\tableofcontents
}

\clearpage

\section{Applications}\label{ap:applications}
In this section  we list a number of selected applications for our method:

\begin{itemize}
    \item Compressed sensing~\cite{candes2006robust}.
    \item Total Generalized Variance (TGV) for image denoising~\cite{bredies2010total}.
    \item Decentralized optimization over networks~\cite{Nedic2009distributed}.
    \item Support-vector machine~\cite{cortes1995support}.
    \item Dantzig selector~\cite{candes2007dantzig}.
    \item Group Lasso~\cite{yuan2006model}.
	\item Network utility maximization.
    \item Square-root Lasso~\cite{belloni2011square}.
    \item $\ell_1$ trend filtering~\cite{kim2009ell_1}.
    \item Convex relaxation of unsupervised image matching and object discovery~\cite{vo2019unsupervised}.
\end{itemize}

In the rest of this section we formulate some  of them explicitly. A summary of the mapping of these problems to the structure of problem \eqref{eq:pb_general} is provided in Table~\ref{tbl:summary_apps}.

\begin{table}[!h]
\caption{Selected applications of Algorithm~\ref{alg:sdm} for solving problem \eqref{eq:pb_general}.}
\centering
{
\begin{tabular}{cccc}
\toprule[.1em]
Special case of problem \eqref{eq:pb_general} & $f(x)$ & $g_j(x)$ & $R(x)$ \\
\midrule
Constrained optimization \eqref{eq:bifg9bd89d} & $f(x)$ & $\ind_{\cC_j}(x)$ & $R(x)$  \\
\addlinespace
Convex projection & $\frac{1}{2}\|x-x^0\|^2$ & $\ind_{\cC_j}(x)$ & $0$  \\
\addlinespace
Convex feasibility & $0$ & $\ind_{\cC_j}(x)$ & $0$  \\
\addlinespace
Dantzig selector \eqref{eq:Dantigg9gf98f} & $0$ & $\chi_{\cB_{\lambda}^j}(x)$ & $\|x\|_1$   \\
\addlinespace
Decentralized optimization~\eqref{eq:bid79f9hbfss} & $f_i(x_i)$ & $\ind_{\{x\;:\;w_j^\top x = 0\}}(x)$ & 0  \\
\addlinespace
Support vector machine \eqref{eq:SVMbuis} & $f(x)=\frac{\lambda}{2} \|x\|^2$, $n=1$ & $\max\{0, 1 - b_j a_j^\top x\}$ & 0   \\
\addlinespace
Overlapping group Lasso \eqref{eq:OGL} & $f_i(x)=\frac{1}{2}(a_i^\top x - b_i)^2$ & $\|x\|_{G_j}$ & 0   \\
\addlinespace
Fused Lasso  \eqref{eq:FLi0hf88fh0} & $\frac{1}{2}(a_i^\top x - b_i)^2$ & $\ind_{\cC_j^\varepsilon}(x)$ & $\lambda\|x\|_1$  \\
\addlinespace
Fused Lasso \eqref{eq:bifub98f0fss} & $\frac{1}{2}(a_i^\top x - b_i)^2$ & $\lambda_2 |\mD_{j:} x|$ & $\lambda_1\|x\|_1$  \\
\bottomrule[.1em]
\end{tabular}
} \label{tbl:summary_apps}
\end{table}

\subsection{Constrained optimization} 
Let  $\cC_j\subseteq \RR^d$ be closed convex sets with a non-empty intersection and consider the constrained composite optimization problem
\begin{align*}
    \min & \quad f(x) + R(x)
    \qquad \text{subject to} \quad \quad x \in \cap_{j=1}^m \cC_j.
\end{align*}
If we let $g_j\equiv \ind_{\cC_j}$ be the characteristic function of $\cC_j$, defined as follows: $\ind_{\cC_j}(x)=0$ for $x\in \cC_j$ and $\ind_{\cC_j}(x) =+\infty$ for $x\notin \cC_j$, this problem can be written in the form 
\begin{equation}\label{eq:bifg9bd89d}
\min_{x\in \RR^d} f(x) +R(x) + \frac{1}{m}\sum_{j=1}^m \underbrace{\ind_{\cC_j}(x)}_{g_j(x)}.
\end{equation}


For $f(x)=\frac{1}{2}\|x-x^0\|^2$ and $R\equiv 0$, this specialized to the {\em  best approximation}  problem. For $f\equiv 0$  and $R\equiv 0$, this problem specializes to the {\em convex feasibility} problem.

\subsection{Dantzig selector} Dantzig selector~\cite{candes2007dantzig} solves the problem of estimating sparse parameter $x$ from a linear model. Given an  input matrix $\mA\in\RR^{m\times d}$, output vector $b\in\RR^m$ and threshold parameter $\lambda\geq 0$, define \[\cB_\lambda\eqdef \{x\;:\;\|\mA^\top(b - \mA x)\|_{\infty}\le \lambda \} = \bigcap_{j=1}^m \cB_{\lambda}^j,\] 
where $\cB^j_\lambda \eqdef \bigl\{x\;:\; \bigl| \left(\mA^\top(b - \mA x)\right)_j \bigr| \leq \lambda\bigr\}$. The goal of the Dantzig selector problem is to find the solution to
\begin{align*}
	\min_{x\in \RR^d} \|x\|_1 + \ind_{\cB_\lambda}(x),
\end{align*}
which can equivalently be written in the  finite-sum form
\begin{equation}\label{eq:Dantigg9gf98f}
	\min_{x\in \RR^d}  \underbrace{\|x\|_1}_{R(x)} + \avejm \underbrace{\ind_{\cB_\lambda^j}(x)}_{g_j(x)}.
	\end{equation}

\subsection{Decentralized optimization} The problem of minimizing the sum of functions over a network~\cite{Nedic2009distributed} can be reformulated as
\begin{align*}
	\min_{x=(x_1, \dotsc, x_n)} \frac{1}{n}\sumin f_i(x_i) + \ind_{\{x\;:\; \mW x = 0\}}(x),
\end{align*}
where $\mW$ is a matrix such that $\mW x =0$ if and only if $x_1=\dotsb=x_n$. Functions $f_1,\dotsc, f_n$ are stored on different nodes and each node has access only to its own function. Matrix $\mW$ is often derived from a communication graph, which defines how the nodes can communicate with each other. Formally, if $\mW = (w_1^\top, \dotsc, w_m^\top)^\top$, we rewrite the problem above as
\begin{equation}\label{eq:bid79f9hbfss}
	\min_{x=(x_1, \dotsc, x_n)} \frac{1}{n}\sumin \underbrace{f_i(x_i)}_{f_i(x)} + \avejm \underbrace{\ind_{\{x\;:\;w_j^\top x = 0\}}(x)}_{g_j(x)}.
\end{equation}

\subsection{Support-vector machine (SVM)} Support-vector machine~\cite{cortes1995support} is a very popular method for supervised classification. The primal formulation of SVM is given by
\begin{equation}\label{eq:SVMbuis}
	\min_{x\in \RR^d} \underbrace{\frac{\lambda}{2} \|x\|^2}_{f(x) } + \avejm \underbrace{\max\{0, 1 - b_j a_j^\top x\}}_{g_j(x)},
\end{equation}
where $a_1,\dotsc, a_m\in \RR^d$ and $b_1, \dotsc, b_m$ are the features and the outputs. It is easy to verify that for $g_j(x) = \max\{0, 1 - b_j a_j^\top x\}$ the proximal operator is given by
\begin{align*}
	\proxj(x) = x + \Pi_{[0, \eta_j]}\biggl(\frac{1 - b_j a_j^\top x}{\|a_j\|^2} \biggr) b_j a_j.
\end{align*}

The celebrated stochastic subgradient descent method Pegasos \cite{Pegasos, Pegasos-MAPR, Pegasos2} for SVMs achieves only slow $\cO(\nicefrac{1}{t})$ rate.

\subsection{Overlapping group Lasso} 
This is a generalization of Lasso proposed in~\cite{yuan2006model} to efficiently select groups of features that are most valuable for the given objective. Let us assume that we are given sets of indices $G_1, \dotsc, G_m \subseteq	\{1,\dotsc, d\}$ and let $\|x\|_{G_j} \eqdef \sqrt{\sum_{i\in G} [x]_i^2}$, where $[x]_i$ is the $i$-th coordinate of vector $x$. Then, assuming that we are given vectors $a_1, \dotsc, a_n\in \RR^d$ and scalars $b_1, \dotsc, b_n$, the objective we want to minimize is 
\begin{equation}\label{eq:OGL}
	\min_{x\in \RR^d}  \frac{1}{n}\sumin \underbrace{\frac{1}{2}(a_i^\top x - b_i)^2}_{f_j(x)} + \avejm \underbrace{\|x\|_{G_j}}_{g_j(x)}.
\end{equation}
It is easy to verify that if $g_j(x) = \|x\|_{G_j}$, then
\begin{align*}
	[\prox_{\eta_j g_j}(x)]_i
	=\begin{cases}
	[x]_i, & \text{if } i\not\in G_j, \\
	\max\left\{0, \left(1 - \frac{\eta_j}{\|x\|_{G_j}}\right) \right\}[x]_i, & \text{if } i\in G_j.
	 \end{cases}
\end{align*}
Vector $y_j^t$ will always have at most $|G_j|$ nonzeros, so one can store in memory only the coordinates of $y_j^t$ from $G_j$.

\subsection{Fused Lasso} The Fused Lasso problem~\cite{tibshirani2005sparsity} is defined as
\begin{equation}\label{eq:FLi0hf88fh0}
	\min_{x\in \RR^d} \frac{1}{n}\sumin \underbrace{\frac{1}{2}(a_i^\top x - b_i)^2}_{f_i(x)} + \underbrace{\lambda\|x\|_1}_{R(x)} + \frac{1}{d-1}\sum_{j=1}^{d-1} \underbrace{\ind_{\cC_j^\varepsilon}(x)}_{g_j(x)},
\end{equation}
where $\cC_j^\varepsilon \eqdef \left\{x\;:\; \left| [x]_j - [x]_{j+1} \right| \le \varepsilon \right\},$
$[x]_j$ is the $j$-th entry of vector $x$, $a_1,\dotsc, a_n\in \RR^d$ and $b_1, \dotsc, b_n\in\RR$ are given vectors and scalars, $\varepsilon$ is given thresholding parameter.

Another formulation of the Fused Lasso is done by using penalty functions. Define $\mD$ to be zero everywhere except for $\mD_{i,i}=1$ and $\mD_{i, i+1}=-1$ with $i=1,\dotsc, d-1$. Note that $\|\mD x\|_1 = \sum_{j=1}^m |\mD_{j:} x|$, where $m$ is the number of rows of $\mD$. Then the reformulated objective is
\begin{equation}\label{eq:bifub98f0fss}
	\min_x \frac{1}{n}\sumin \underbrace{\frac{1}{2}(a_i^\top x - b_i)^2}_{f_i(x)} + \underbrace{\lambda_1\|x\|_1}_{R(x)} + \avejm \underbrace{\lambda_2 |\mD_{j:} x|}_{g_j(x)}.
\end{equation}
In our notation, this means $\mA = \mD^\top$ and $\mA^\top \mA$ is a tridiagonal matrix given by
\begin{align*}
	\mA^\top \mA
	=
	\begin{pmatrix}
		2   & -1  \\
		-1 &    2 & -1  \\
		     &  -1 & 2   & -1 & \\
		     &       &      & \ddots    & -1 \\
		     &       &      &           -1  & 2
	\end{pmatrix}.
\end{align*} 
Let $\mW$ be a tridiagonal matrix of size $(d-1)\times (d-1)$ with $a$ on its main diagonal and $b$ on the other two diagonals. It can be shown that its eigenvalues  are given by $\lambda_k(\mW) = a+2|b|\cos \left(\frac{k\pi}{d} \right)$, $k=1,\dotsc, d-1$. Thus, $\lambda_{\min}(\mA^\top \mA) = 2 + 2\cos\left(\left(1 - \frac{1}{d} \right)\pi\right) = 2 - 2\cos\left(\frac{\pi}{d}\right) \approx \frac{1}{2d^2}$ and $\min_j \frac{1}{\|\mA_j\|^2} = \frac{1}{6}$. Therefore, if in~\eqref{eq:FLi0hf88fh0} or~\eqref{eq:bifub98f0fss} $\lambda_1=0$, we guarantee linear convergence with the aforementioned constants.

\subsection{Square-root Lasso} The approach gets its name from minimizing the square root of the regular least squares, i.e., $\|\mD w - b\|$ instead of $\|\mD w - b\|^2$. This is then combined with $\ell_1$-penalty for feature selection, which gives the objective
\begin{align*}
		\min_{w\in \RR^d} \|\mD w - b\| + \lambda \|w\|_1.
\end{align*}
Equivalently, by introducing a new variable $z$ we can put constraints $\mD_{j:} x - [z]_j =0$ for $j=1,\dotsc, m$, which can be written as $a_j^\top(w^\top, z^\top)^\top=0$ with $a_j = (\mD_{j:}, e_j^\top)^\top$ and $e_j\eqdef (0, 0, \dotsc, \underbrace{1}_{j}, \dotsc, 0)$. Then, the reformulation is
\begin{align*}
	\min_{x=(w, z)\in \RR^{d+m}} \avejm \underbrace{\ind_{\{x:a_j^\top x =0\}}}_{g_j(x)=g_j(w, z)} + \underbrace{\|z - b\| + \lambda \|w\|_1}_{R(x)=R(w,z)}.
\end{align*}
The proximal operator of $R$ is that of a block-separable function, which is easy to evaluate:
\begin{align*}
	\prox_{\eta R}(x) 
	= \begin{pmatrix}\prox_{\eta\lambda\|\cdot\|_1}(w) \\ \prox_{\eta \|\cdot - b\|}(z)\end{pmatrix}.
\end{align*}

\newpage

\section{Relation to Existing Methods}

\subsection{SDCA, Dykstra's algorithm and the Kaczmarz method}
Here we formulate SDCA~\cite{shalev2013stochastic}, Dykstra's algorithm and Kaczmarz method. SDCA is a method for solving
\begin{align*}
	\min_{x\in\RR^d} \avejm g_j(x)+ \frac{1}{2}\|x - x^0\|^2.
\end{align*}
If $j$ is sampled uniformly from $\{1,\dotsc, m\}$, SDCA iterates can be defined by the following recursion,
\begin{align*}
	x^{t+1} 
	&= \prox_{\eta g_j}(x^t + \overline y_j^t), \\
	\overline y_j^{t+1}
	&= \overline y_j^t + x^t - x^{t+1},
\end{align*}
If we restrict our attention to characteristic functions, i.e., 
\begin{align*}
	g_j(x) = \ind_{\cC_j}(x) = \begin{cases}0, &\text{if } x\in \cC_j \\ +\infty, & \text{otherwise} \end{cases},
\end{align*}
then the proximal operator step is replaced with projection:
\begin{align*}
	x^{t+1} 
	&= \Pi_{\cC_j}(x^t + \overline y_j^t).
\end{align*}
This is known as Dykstra's algorithm. Finally, if $\cC_j= \{x: a_j^\top x=b_j\}$, then it boils down to random projections, i.e.,
\begin{align*}
	x^{t+1} = \Pi_{\{a_j^\top x = b_j\}} (x^t),
\end{align*}
which is the method of Kaczmarz.

\begin{theorem}\label{th:kaczmarz}
	Consider the regularized minimization problem of SDCA, which is 
	\begin{align*}
		\min_x \avejm g_j(x)	 + \frac{1}{2}\|x - x^0\|^2
	\end{align*}		
	with convex $g_1, \dotsc, g_m$. Then, SDCA is a special cases of Algorithm~\ref{alg:sdm} obtained by applying it with $f(x)=\frac{1}{2}\|x - x^0\|^2$, $R(x)\equiv 0$, stepsize $\eta= \frac{1}{m}$ and initialization $y_1^0=\dotsb=y_m^0 =0$. Furthermore, if we consider special case $g_j = \ind_{\cC_j}$, where $\cC_j\neq \emptyset$ is a closed convex set, then we also obtain Dykstra's algorithm, and if every $\cC_j$ is a linear subspace, then we recover the Kaczmarz method.
\end{theorem}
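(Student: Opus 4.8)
The plan is to show that the general Algorithm~\ref{alg:double}, when specialized to the stated choices, reduces \emph{exactly} to the SDCA recursion, and then to treat the two further specializations (characteristic functions, and linear subspaces) as transparent consequences of the definition of the proximal operator.

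First I would write out the updates of Algorithm~\ref{alg:double} under the substitutions $f(x) = \tfrac12\|x - x^0\|^2$, $R \equiv 0$, $\eta = \tfrac1m$, and $\eta_j = \tfrac{\eta}{m p_j}$. With $R \equiv 0$ the proximal step in $R$ is the identity, so $z^t = x^t - \eta v^t - \eta y^t$. Since $f$ is a simple quadratic and $n = 1$, the exact gradient is cheap, so I would take the $v^t = \nabla f(x^t) = x^t - x^0$ oracle (Gradient Descent, Lemma~\ref{lem:gd}), which is permissible because it satisfies Assumption~\ref{as:method}. Substituting $\eta = \tfrac1m$ gives $z^t = x^t - \tfrac1m(x^t - x^0) - \tfrac1m y^t$. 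The key bookkeeping step is to reconcile the scaling: I would introduce the rescaled dual variables $\overline y_j^t \eqdef \eta_j y_j^t$ (equivalently, check that with uniform $p_j = \tfrac1m$ one has $\eta_j = 1$ so that $\overline y_j^t = y_j^t$), verify by induction that $y^t = \tfrac1m \sum_j y_j^t$ stays consistent with the running average maintained by the algorithm, and then show that $x^{t+1} = \prox_{\eta_j g_j}(z^t + \eta_j y_j^t) = \prox_{g_j}(x^t + \overline y_j^t)$ after the cancellation of the gradient term against the $x^0$ shift — this is where one uses that $\nabla f(x) = x - x^0$ so that $x^t - \eta\nabla f(x^t) = (1-\tfrac1m)x^t + \tfrac1m x^0$ matches the SDCA proximal argument once the dual correction is folded in. The $y_j$ update of Algorithm~\ref{alg:double}, namely $y_j^{t+1} = \tfrac1{\eta_j}(z^t + \eta_j y_j^t - \prox_{\eta_j g_j}(z^t + \eta_j y_j^t))$, then rewrites as $\overline y_j^{t+1} = \overline y_j^t + z^t - x^{t+1}$, which after the same substitution is precisely $\overline y_j^{t+1} = \overline y_j^t + x^t - x^{t+1}$; for the indices $k \neq j$ not touched in this iteration one has $y_k^{t+1} = y_k^t$, matching SDCA's lazy update.

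Next, for the Dykstra specialization I would simply note that when $g_j = \ind_{\cC_j}$ the proximal operator $\prox_{\eta g_j}$ equals the Euclidean projection $\Pi_{\cC_j}$ for every $\eta > 0$ (it is independent of the stepsize), so the recursion $x^{t+1} = \prox_{g_j}(x^t + \overline y_j^t)$ becomes $x^{t+1} = \Pi_{\cC_j}(x^t + \overline y_j^t)$ with the dual update unchanged, which is the textbook form of Dykstra's algorithm. Finally, when each $\cC_j$ is a linear subspace, I would use that projection onto a subspace is a linear operator and that the Dykstra corrections $\overline y_j^t$ lie in $\cC_j^\perp$, on which $\Pi_{\cC_j}$ vanishes; hence $\Pi_{\cC_j}(x^t + \overline y_j^t) = \Pi_{\cC_j}(x^t)$ and the corrections can be dropped entirely, collapsing the iteration to $x^{t+1} = \Pi_{\cC_j}(x^t)$, i.e.\ the Kaczmarz method. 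This last point deserves a short induction showing $\overline y_j^t \in \cC_j^\perp$ for all $t$.

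The main obstacle I expect is purely notational rather than conceptual: the algorithm in the paper carries sampling probabilities $p_j$ and the associated per-block stepsizes $\eta_j = \eta/(mp_j)$, together with the maintained average $y^t$, so one has to be careful that the dimensionless quantities ($\overline y_j^t = \eta_j y_j^t$, and the effective argument of each proximal call) line up with the plain SDCA iteration, which has no such parameters; choosing uniform $p_j$ makes this clean, but one should check that the claimed equivalence does not secretly depend on the sampling distribution. A secondary subtlety is the choice of gradient oracle — one must confirm that taking $v^t = \nabla f(x^t)$ is a legitimate instance covered by Assumption~\ref{as:method} (it is, by Lemma~\ref{lem:gd}, with $\cM^t \equiv 0$), so that SDCA genuinely falls under the same analysis umbrella rather than merely resembling the iteration formally.
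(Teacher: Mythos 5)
Your overall route is the same as the paper's: specialize the updates, rescale the dual variables as $\overline y_j^t \eqdef \eta_j y_j^t$, show the proximal argument collapses to $x^t + \overline y_j^t$, and then read off Dykstra and Kaczmarz as the characteristic-function and subspace cases. However, there is one concrete gap in how you set up the induction. The whole reduction hinges on the identity $z^t = x^t$, i.e.\ on the gradient term $-\eta(x^t-x^0)$ cancelling \emph{exactly} against $-\eta y^t$, and this cancellation holds only because of the invariant $y^t = x^0 - x^t$. That invariant is what must be proved by induction (it holds at $t=0$ since $y^0=0$, and it propagates because $y^{t+1} = y^t + z^t - x^{t+1} = x^0 - x^t + x^t - x^{t+1} = x^0 - x^{t+1}$ under uniform sampling). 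The induction you propose instead --- that $y^t$ stays consistent with the running average $\frac1m\sum_j y_j^t$ --- is maintained by the algorithm by construction and gives you nothing; with only that, the phrase ``once the dual correction is folded in'' does not produce the cancellation, and $x^{t+1} = \prox_{\eta_j g_j}(z^t + \eta_j y_j^t)$ cannot be rewritten as $\prox_{\eta_j g_j}(x^t + \overline y_j^t)$. Replacing your induction hypothesis by $y^t = x^0 - x^t$ repairs the argument and makes it identical to the paper's proof.

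Two minor points. First, your parenthetical claim that uniform $p_j=\nicefrac1m$ gives $\eta_j=1$ is off under the paper's convention $\eta_j = \nicefrac{\eta}{(m p_j)}$, which yields $\eta_j = \eta = \nicefrac1m$; this does not affect the structure since you work with $\overline y_j^t = \eta_j y_j^t$ anyway, but it changes which constant sits inside $\prox_{\eta_j g_j}$ (irrelevant for the Dykstra/Kaczmarz cases, where the prox is a projection independent of the stepsize). Second, your Kaczmarz step --- an induction showing $\overline y_j^t \in \cC_j^\perp$ so that $\Pi_{\cC_j}(x^t+\overline y_j^t)=\Pi_{\cC_j}(x^t)$ --- is correct and is actually more explicit than what the paper records; the paper asserts this specialization without proof.
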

\begin{proof}
	Consider the iterates of SDM. We will show by induction that $y^t = x^0-x^t$ and $x^{t+1} = \proxj(x^t + \eta_j y_j^t)$.
	Indeed, it holds for $y^0$ by initialization, and then by induction assumption we have
	\begin{align*}
		z^t
		= x^t - \eta(x^t - x^0) - \eta y^t
		= x^t - \eta(x^t - x^0) - \eta (x^0 - x^t)
		= x^t.
	\end{align*}
	Therefore, if we denote $\overline y_j^t \eqdef \eta_j y_j^t$, then
	\begin{align*}
		x^{t+1} 
		=\proxj(x^t + \overline y_j^t),
	\end{align*}
	which is the update rule of $x^{t+1}$ in SDCA. Moreover, we have
	\begin{align*}
		\overline y_j^{t+1} 
		= \eta y_j^{t+1}
		=\eta y_j^t + x^t - x^{t+1}
		= \overline y_j^t + x^t - x^{t+1}.
	\end{align*}
	Finally, by induction assumption it holds $y^t=x^0 - x^t$, whence
	\begin{align*}
		y^{t+1}
		= y^t + \frac{1}{m}(y_j^{t+1} - y_j^{t})
		= y^t + z^t - x^{t+1}
		= x^0 - x^t + x^t - x^{t+1}
		= x^0 - x^{t+1},
	\end{align*}
	which yields our induction step and the proof itself.
\end{proof}
\subsection{Accelerated Kaczmarz}\label{ap:kaczmarz}
Accelerated Kaczmarz~\cite{liu2016accelerated} performs the following updates:
\begin{align*}
	z^t
	&= (1 - \alpha_t) x^t - \alpha_t y^t,\\
	x^{t+1}
	&= \Pi_{\{x: a_i^\top x = b_i\}} (z^t), \\
	y^{t+1}
	&= y^t + \gamma_t (z^t - x^{t+1}) + (1 - \beta_t) (z^t - y^t)
\end{align*}
with some parameters $\alpha_t, \gamma_t, \beta_t$. While the original analysis~\cite{liu2016accelerated} suggests $\beta_t<1$, our method gives the same update when $f(x)=\frac{1}{2}\|x\|^2$, $R\equiv 0$, $\alpha_t=\eta$, $\beta_t=1$, $\gamma_t=\frac{1}{\eta n}$. 
\subsection{ADMM and Douglas-Rachford splitting}
ADMM, also known as Douglas-Rachford splitting, in its simplest form as presented in~\cite{parikh2014proximal} is a special case of Algorithm~\ref{alg:sdm} when $f\equiv 0$ and $m=1$.
\subsection{Point-SAGA, SAGA, SVRG and Proximal GD}
In the trivial case $f\equiv 0$ and $R\equiv 0$, we recover Point-SAGA. Methods such as SAGA, SVRG and Proximal Gradient Descent are obtained, in contrast, by setting $g\equiv 0$. We would like to mention that introducing $g$ does not change the stepsizes for which those methods work, e.g., Gradient Descent works with arbitrary $\eta < \nicefrac{2}{L}$, which is tight. The similarity suggests that small $\eta$ should be used when solving this problem and this observation is validated by our experiments.

\subsection{Stochastic Primal-Dual Hybrid Gradient}\label{ap:spdhg}
The relation to the Stochastic Primal-Dual Hybrid Gradient (SPDHG) is complicated. On the one hand, SPDHG is a general method with three parameters and it preconditions proximal operators with matrices, so our method cannot be its strict generalization. On the other hand, SPDHG does not allow for $f$. Moreover, when $f\equiv 0$ and some parameters are set to specific values in SPDHG, the methods coincide, but the guarantees are not the same. In particular, we show below that one of the parameters in SPDHG, $\theta$, should be set to 1, in which case linear convergence for smooth $g_1, \dotsc, g_m$ was not known for SPDHG. Therefore, the tools developed in this work can potentially lead to new discoveries about full version of SPDHG as well.

Let us now formulate the method explicitly. After a simple rescaling of the functions, SPDHG from~\cite{ehrhardt2017faster} can be formulated as a method to solve the problem
\begin{align}
	\min_{x\in\RR^d} \avejm \phi_j(\mA_j^\top x) + R(x).\label{pb:spdhg}
\end{align}
Renaming the variables for our convenience and choosing for simplicity uniform probabilities of sampling $j$ from $\{1,\dotsc, m\}$, the update rules of SPDHG can be written as
\begin{align*}
	w^t 
	&= \proxR(w^{t-1} - \eta \overline y^t), \\
	y_j^{t+1} 
	&= \prox_{\sigma \phi_j^*}(\sigma\mA_j^\top w^t + y_j^t), \\
	y^{t+1} 
	&= y^t + \frac{1}{m}\mA_j(y_j^{t+1} - y_j^t), \\
	\overline y^{t+1} 
	&= y^t + \theta \mA_j(y_j^{t+1} - y_j^t),
\end{align*}
where $\eta, \sigma$ and $\theta$ are the method's parameters and $\phi_j^*$ is the Fenchel conjugate of $\phi_j$. The initialization that we are interested in is with $y^0 = \avejm y_j^0$, $\overline y^0 = y^0$, $w^0 = x^0$.

One can immediately see that one big difference with our approach is that the method puts $\mA_j$ outside of the proximal operator, which also leads to different iteration complexity. In particular, when $\phi_1,\dotsc, \phi_m$ are smooth, the complexity proved in~\cite{chambolle2018stochastic} is 
\begin{align*}
	\cO\left(\left(m + \sumjm \|\mA_j\|\sqrt{\frac{L_\phi}{\mu_R}} \right)\log \frac{1}{\varepsilon}\right),
\end{align*}
where $\mu_R$ is the strong convexity constant of $R$ and $L_\phi$ is the smoothness constant of $\phi_1,\dotsc, \phi_m$. Since function $g_j(x) = \phi_j(\mA_j^\top x)$ is at most $L_\phi \|\mA_j\|^2$ smooth, our rate from Corollary~\ref{cor:acc_in_g} with $\mu$-strongly convex and $L$-smooth $f$ is 
\begin{align*}
	\cO\left(\left(n+m + \frac{L}{\mu} +  \sqrt{m\frac{L_\phi}{\mu}}\max_j \|\mA_j\| \right)\log \frac{1}{\varepsilon}\right).
\end{align*}
If, in addition, we use sampling with probabilities proportional to $\|\mA_j\|$, then we can achieve
\begin{align*}
	\cO\left(\left(n+m + \frac{L}{\mu} +  \frac{1}{\sqrt{m}}\sumjm \|\mA_j\|\sqrt{\frac{L_\phi}{\mu}} \right)\log \frac{1}{\varepsilon}\right).
\end{align*}

We do not prove this, but the complexity for our method will be similar if we use strongly convex $R$ rather than $f$, so our rates should match or be even be superior to that of SPDHG, at the cost of evaluating potentially harder proximal operators.

Now, let us prove that our method is indeed connected to SPDHG via choice of $\theta=1$ and $\eta\sigma = 1$.
\begin{theorem}
	If we apply SPDHG with identity matrices $\mA_j = \mI$, i.e., $\phi_j(x) = g_j(x)$, and choose parameters $\theta=1$ and $\eta\sigma = 1$, then it is algorithmically equivalent to Algorithm~\ref{alg:sdm} with $f\equiv 0$.
\end{theorem}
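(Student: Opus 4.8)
The plan is to prove that the two algorithms produce identical iterates, by induction on $t$, after normalising SPDHG with the stated parameters. First I would substitute $\mA_j=\mI$, $\phi_j=g_j$, $\theta=1$ and $\sigma=1/\eta$ into the SPDHG recursions. The only place a conjugate appears is the dual step $y_j^{t+1}=\prox_{\sigma g_j^*}(\sigma w^t+y_j^t)$; I would eliminate it with the Moreau decomposition $\prox_{\sigma g_j^*}(v)=v-\sigma\prox_{\sigma^{-1}g_j}(\sigma^{-1}v)$, which together with $\sigma\eta=1$ gives
\[
  y_j^{t+1}=\sigma\bigl(w^t+\eta y_j^t-\prox_{\eta g_j}(w^t+\eta y_j^t)\bigr).
\]
This is precisely the ``hindsight subgradient'' update $y_j^{t+1}=\tfrac{1}{\eta_j}(z^t+\eta_j y_j^t-\prox_{\eta_j g_j}(z^t+\eta_j y_j^t))$ of Algorithm~\ref{alg:double}, once one identifies $w^t$ with $z^t$ and uses $\eta_j=\eta$ (the value produced by uniform sampling $p_j=1/m$ in $\eta_j=\eta/(mp_j)$); in particular $\prox_{\eta g_j}(w^t+\eta y_j^t)$ is the same point as the auxiliary primal iterate $x^{t+1}$ of our method.

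Second, I would set up a dictionary between the remaining state variables and check it is invariant under one iteration. The natural candidate is: SPDHG's $w^t\leftrightarrow z^t$ (both outputs of $\prox_{\eta R}$), SPDHG's $\{y_j^t\}\leftrightarrow\{y_j^t\}$ and $y^t\leftrightarrow y^t=\tfrac1m\sum_k y_k^t$, and SPDHG's extrapolated dual $\overline y^t$ matched to $y^t+(y_{j}^{t}-y_{j}^{t-1})$, where $j$ is the index drawn at the previous step (equivalently, $\overline y^t$ is what Algorithm~\ref{alg:double} implicitly subtracts when passing from $x^t$ to the argument of its $\prox_{\eta R}$ step). The inductive step then splits into a dual part and a primal part. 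The dual part — that the SPDHG updates of $y_j$, $y$ and $\overline y$ with $\theta=1$ preserve the dictionary — reduces to the elementary prox/subgradient identity $\prox_{\eta g_j}(v)\in v-\eta\partial g_j(\prox_{\eta g_j}(v))$, which for our method reads $x^{t}=z^{t-1}+\eta(y_{j}^{t-1}-y_{j}^{t})$. The primal part — that the two arguments of $\prox_{\eta R}$ coincide, i.e.\ $w^{t-1}-\eta\overline y^t=x^t-\eta v^t-\eta y^t$ — then follows, using $v^t=0$ because $f\equiv 0$ and substituting the identity just established. The matched initialisations $w^0=x^0$ and $\overline y^0=y^0=\tfrac1m\sum_k y_k^0$ supply the base case.

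The step I expect to be the main obstacle is the dual bookkeeping: pinning down exactly which linear combination of $\{y^t,y_j^t,y_j^{t-1}\}$ equals SPDHG's $\overline y^t$, reconciling the order in which SPDHG refreshes its primal block and its dual block with the order inside Algorithm~\ref{alg:double}, and verifying that it is precisely $\theta=1$ (and not, say, $\theta=1/m$, which would freeze $\overline y^t\equiv y^t$ and kill the extrapolation) that keeps $w^{t-1}-\eta\overline y^t$ aligned with the argument of our $\prox_{\eta R}$ step. This is where the hypothesis $\theta=1$ enters essentially, in agreement with the remark preceding the theorem that only this value yields the correspondence. Everything else — the Moreau rewriting, the primal matching, the base case, and the fact that nothing beyond properness, closedness, convexity and computability of the proximal operators of $g_j$ and $R$ is used — is routine once the dictionary is fixed.
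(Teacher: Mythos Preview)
Your proposal is correct and follows essentially the same route as the paper: Moreau's identity to convert the conjugate prox into $y_j^{t+1}=y_j^t+\tfrac{1}{\eta}(w^t-\prox_{\eta g_j}(w^t+\eta y_j^t))$, the identification $w^t\leftrightarrow z^t$, and an inductive check that the arguments fed into $\prox_{\eta R}$ coincide, with $\theta=1$ being the value that makes the cancellation work. The only cosmetic difference is that the paper avoids writing down an explicit formula for $\overline y^t$ and instead proves directly the combined identity $w^{t-1}-\eta\overline y^t = x^t-\eta y^t$, which sidesteps the bookkeeping you flag as the main obstacle.
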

\begin{proof}
	Since $\phi_j$ and $g_j$ are the same, we will use in the proof $g_j$ only. 
	
	First, mention that it is straightforward to show by induction that $y^t = \avejm y_j^t$, which coincides with our update. Our goal is to show by induction that in SPDHG it holds
	\begin{align*}
		w^{t-1} - \eta \overline y^t
		= x^t - \eta y^t,
	\end{align*}
	where we define sequence $x^t$ as 
	\begin{align*}
		x^{t+1} 
		\eqdef \prox_{\eta g_j}(w^t + \eta y_j^t)
		= \prox_{\frac{1}{\sigma} g_j}(w^t + \eta y_j^t).
	\end{align*}
	We will see that implicitly $x^{t+1}$ is present in every update of SPDHG. To this end, let us first rewrite the update for $y_j^{t+1}$. We have by Moreau's identity
	\begin{align*}
		y_j^{t+1}
		= \prox_{\sigma g_j^*}(\sigma w^t + y_j^t)
		= \sigma w^t + y_j^t - \sigma\prox_{\frac{1}{\sigma} g_j}\left(\frac{\sigma w^t + y_j^t}{\sigma} \right).
	\end{align*}
	Since we consider $\sigma= \frac{1}{\eta}$, it transforms into
	\begin{align*}
		y_j^{t+1}
		= y_j^t  + \frac{1}{\eta}\left( w^t - \prox_{\eta g_j}(w^t + \eta y_j^t) \right) 
		&= y_j^t  + \frac{1}{\eta}\left( w^t - x^{t+1} \right)
	\end{align*}
	The only missing thing is rewriting update for $w^t$ in terms of $x^t$ and $y^t$. From the update rule for $y_j^{t+1}$ we derive
	\begin{align*}
		\overline y^{t+1}
		= y^t + \theta (y_j^{t+1} - y_j^t)
		= y^t + \frac{\theta }{\eta}(w^t - x^{t+1}).
	\end{align*}
	Hence,
	\begin{align*}
		w^{t+1} 
		= \proxR(w^t - \eta \overline y^{t+1})
		= \proxR(w^t - \eta  y^{t+1} - \theta (w^t - x^{t+1}))
		\overset{\theta=1}{=} \proxR(x^{t+1} - \eta  y^{t+1}).
	\end{align*}
	Thus, updates for $w^t$, $y_j^t$ and $y^t$ completely coincide under this choice of parameters.
\end{proof}
Since our method under $f\equiv 0$ reduces to Point-SAGA, we obtain the following result that was unknown.
\begin{corollary}
	Point-SAGA~\cite{defazio2016simple} is a special case of Stochastic Primal-Dual Hybrid Gradient~\cite{chambolle2018stochastic}.
\end{corollary}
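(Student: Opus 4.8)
The plan is to obtain the claim by composing two reductions that are already available in this section, so that no new estimates are needed. The first reduction is the Theorem immediately preceding this corollary: SPDHG~\cite{chambolle2018stochastic}, run with identity matrices $\mA_j = \mI$ (equivalently $\phi_j = g_j$) and with parameters $\theta = 1$ and $\eta\sigma = 1$, is algorithmically equivalent to Algorithm~\ref{alg:double} with $f \equiv 0$. The second reduction is the observation made in the subsection on Point-SAGA, SAGA, SVRG and proximal GD: Algorithm~\ref{alg:double} with $f \equiv 0$ and $R\equiv 0$ reduces to Point-SAGA~\cite{defazio2016simple}. Chaining the two specializations then yields the statement.

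Concretely, I would first recall why the second reduction holds: when $f\equiv 0$ there is no gradient term, so $v^t = 0$; when additionally $R\equiv 0$ the map $\proxR$ is the identity, so the intermediate step collapses to $z^t = x^t - \eta y^t$. The remaining updates $x^{t+1} = \proxj(z^t + \eta_j y_j^t)$, $y_j^{t+1} = y_j^t + \tfrac{1}{\eta_j}(z^t - x^{t+1})$, and $y^{t+1} = y^t + \tfrac{1}{m}(y_j^{t+1}-y_j^t)$ are exactly the Point-SAGA recursion together with the bookkeeping of its table of stored subgradients; this is the same correspondence already used to list Point-SAGA in Table~\ref{tab:special_cases}. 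Having both reductions in hand, I would then take SPDHG with $\mA_j = \mI$, $\theta = 1$, $\eta\sigma = 1$, and in addition $R \equiv 0$: by the first reduction this equals Algorithm~\ref{alg:double} with $f\equiv 0$ and $R\equiv 0$, which by the second reduction equals Point-SAGA. Hence Point-SAGA is recovered as this particular parameter specialization of SPDHG.

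The main point requiring care — really a verification rather than an obstacle — is checking that imposing $R\equiv 0$ is compatible with the parameter choices $\theta=1$, $\eta\sigma=1$ used in the preceding Theorem (it is, since those choices placed no restriction on $R$), and that the initializations line up: Point-SAGA's initial stored subgradients must correspond to $y_1^0,\dots,y_m^0$ in Algorithm~\ref{alg:double} and hence, via the averaged dual variable $y^0 = \tfrac{1}{m}\sum_{j} y_j^0$ with $\overline y^0 = y^0$ and $w^0 = x^0$, to the SPDHG initialization used in that Theorem. Once these are aligned, the equivalence is immediate and the corollary follows with no further computation.
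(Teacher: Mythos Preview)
Your proposal is correct and follows exactly the paper's approach: the paper deduces the corollary in a single sentence by noting that Algorithm~\ref{alg:double} with $f\equiv 0$ (and $R\equiv 0$) is Point-SAGA, and then invoking the preceding Theorem that SPDHG with $\mA_j=\mI$, $\theta=1$, $\eta\sigma=1$ coincides with Algorithm~\ref{alg:double} under $f\equiv 0$. You have simply spelled out the two reductions and the compatibility/initialization checks more carefully than the paper does.
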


\newpage

\section{Evaluating Proximal Operators}
For some functions, the proximal operator admits a closed form solution, for instance if $g_j( x) = \ind_{\{x\;:\; a_j^\top x=b_j\}}(x)$, then
\begin{align*}
	\proxj(x)
	= x - \frac{a_j^\top x - b_j}{\|a_j\|^2}a_j.
\end{align*}
If, however, the proximal operator is not given in a closed form, then it is still possible to efficiently evaluate it. If $g_j=\phi_j(\mA_j^\top x)$, $\mA_j\in \RR^{d\times d_j}$, then the proximal operator is the solution of a $d_j$-dimensional strongly convex problem.
\begin{lemma}\label{lem:prox_of_composition}
	Let $\phi_j \colon \RR^{d_j}\to \RR$ be a convex lower semi-continuous function such that $\Range{\mA_j^\top}$ has a point of $\dom \phi$. If $g_j(x) = \phi_j(\mA_j^\top x)$, then
	\begin{align*}
		x - \prox_{\eta_j g_j}(x)
		\in \Range{\mA_j}.
	\end{align*}
\end{lemma}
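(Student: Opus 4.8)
The plan is to read off the conclusion from the optimality (Fermat) condition of the strongly convex problem defining $\prox_{\eta_j g_j}(x)$, combined with the elementary identity $(\ker\mA_j^\top)^\perp=\Range{\mA_j}$.

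First I would verify that the proximal operator is well defined. Write $h(u)\eqdef\phi_j(\mA_j^\top u)$ and $F(u)\eqdef h(u)+\tfrac{1}{2\eta_j}\|u-x\|^2$. Since $\phi_j$ is convex and lower semicontinuous and $\mA_j^\top$ is linear, $h$ is convex and lower semicontinuous, and the hypothesis that $\Range{\mA_j^\top}$ meets $\dom\phi_j$ forces $h$ (hence $F$) to be proper. Being the sum of a proper closed convex function and a $\tfrac{1}{\eta_j}$-strongly convex continuous function, $F$ is coercive, lower semicontinuous and strongly convex, so it has a unique minimizer $u^*=\prox_{\eta_j g_j}(x)$ with finite value; in particular $u^*\in\dom h$, i.e.\ $\phi_j(\mA_j^\top u^*)<+\infty$.

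The key step is to restrict $F$ to the affine subspace $u^*+\ker\mA_j^\top$. For any $w\in\ker\mA_j^\top$ we have $\mA_j^\top(u^*+w)=\mA_j^\top u^*$, so $h$ is constant (equal to $h(u^*)$) along this subspace. Hence $w=0$ must also minimize $w\mapsto\tfrac{1}{2\eta_j}\|u^*+w-x\|^2$ over $w\in\ker\mA_j^\top$; but the unique minimizer of the squared distance from $x$ to the affine subspace $u^*+\ker\mA_j^\top$ is the orthogonal projection of $x$ onto it, so $u^*$ \emph{is} that projection. Equivalently, $x-u^*\perp\ker\mA_j^\top$, and therefore $x-\prox_{\eta_j g_j}(x)=x-u^*\in(\ker\mA_j^\top)^\perp=\Range{\mA_j}$, which is the claim.

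An alternative route is through the subdifferential sum rule: because the quadratic term is finite and continuous everywhere, $0\in\partial h(u^*)+\tfrac{1}{\eta_j}(u^*-x)$, so $\tfrac{1}{\eta_j}(x-u^*)\in\partial h(u^*)$, and one checks that every subgradient of $h=\phi_j\circ\mA_j^\top$ lies in $\Range{\mA_j}$ by testing the subgradient inequality against perturbations in $\ker\mA_j^\top$, along which $h$ is constant. In either version the only genuinely delicate point is the bookkeeping that guarantees the prox exists and is finite-valued at $u^*$ — this is exactly where the range/domain hypothesis is used; the identity $(\ker\mA_j^\top)^\perp=\Range{\mA_j}$ and the projection characterization are routine.
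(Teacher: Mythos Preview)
Your proof is correct and follows essentially the same route as the paper: both arguments exploit that $h=\phi_j\circ\mA_j^\top$ is constant along $\ker\mA_j^\top$, so the quadratic term alone forces $x-u^*\perp\ker\mA_j^\top$, i.e.\ $x-u^*\in\Range{\mA_j}$; the paper does this via the explicit orthogonal decomposition $z=x+\mA_j\beta+w$ and separates the objective, whereas you phrase it as a projection characterization after restricting to $u^*+\ker\mA_j^\top$. Your treatment of well-definedness of the prox and the alternative subdifferential argument are extra care not present in the paper, but the core idea is the same.
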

\begin{proof}
	Let us fix $x$. Any vector $z\in\RR^d$ can be decomposed as $z = x + \mA_j\beta + w$, where $\beta\in\RR^{d_j}$ and $\mA_j^\top w = 0$, from which it also follows $g_j(z) = \phi_j(\mA_j^\top x+ \mA_j^\top \mA_j \beta)$. Then
	\begin{align*}
		\prox_{\eta_j g_j}(x)
		&\eqdef \argmin_{z\in\RR^d}\left\{\eta_j \phi_j(\mA_j^\top z) + \frac{1}{2}\|z - x\|^2 \right\}\\
		&= \argmin_{z=x+\mA_j\beta + w}\left\{\eta_j \phi_j(\mA_j^\top x + \mA_j^\top \mA_j \beta) + \frac{1}{2}\|\mA_j\beta + w\|^2 \right\} \\
		&= \argmin_{z=x+\mA_j\beta + w}\left\{\eta_j \phi_j(\mA_j^\top x + \mA_j^\top \mA_j \beta) + \frac{1}{2}\|\mA_j\beta\|^2 + \frac{1}{2}\| w\|^2 \right\} .
	\end{align*}
	Clearly, the last expression achieves its minimum only when $w=0$.
\end{proof}

We can simplify the expression for the proximal operator even further if $\mA_j$ is of full column rank, for instance if it is just a single nonzero row. It is straightforward to verify that for any matrix $\mB\in\RR^{d_1\times d_2}$, constant vector $c\in\RR^{d_2}$ and function $\Phi$ with a unique minimizer and $\dom \Phi(\mB \beta +c) \neq \emptyset$ it holds
\begin{align}
	\argmin_{\beta=\mB(\alpha + c), \beta\in\RR^{d_2}}\Phi(\beta)
	&= \argmin_{\beta=\mB(\alpha + c), \beta\in\RR^{d_2}} \Phi(\mB(\alpha + c)) \notag \\
	&= \mB\argmin_{u=\alpha + c, \alpha\in\RR^{d_1}} \Phi(\alpha + c) \notag \\
	&= \mB\Bigl(\argmin_{u\in\RR^{d_1}} \Phi( u ) - c\Bigr). \label{eq:argmin_rule}
\end{align}
Since we know by Lemma~\ref{lem:prox_of_composition} that $u\eqdef \proxj(x) = x + \mA_j \beta_j$ for some $\beta_j\in\RR^{d_j}$, we can write the necessary and sufficient optimality condition for $u$ by repeatedly applying~\eqref{eq:argmin_rule}
\begin{align*}
	\proxj(x) 
	&= \argmin_{u = x + \mA_j\beta,\; \beta\in\RR^{d_j}} \left\{\phi_j(\mA_j^\top u) + \frac{1}{2\eta_j}\|x - u\|^2 \right\}\\
	&\overset{\eqref{eq:argmin_rule}}{=} x + \mA_j \argmin_{\beta\in\RR^{d_j}} \left\{\phi_j\left(\mA_j^\top (x + \mA_j\beta)\right) + \frac{1}{2\eta_j} \|\mA_j\beta\|^2\right\} \\
	&=x + \mA_j \argmin_{\beta\in\RR^{d_j}} \left\{\phi_j\left(\mA_j^\top x + \mA_j^\top\mA_j\beta\right) + \frac{1}{2\eta_j} \|\mA_j(\mA_j^\top\mA_j)^{-1}\mA_j^\top\mA_j\beta\|^2\right\} \\
	&\overset{\eqref{eq:argmin_rule}}{=}x + \mA_j (\mA_j^\top\mA_j)^{-1}\argmin_{\alpha=\mA_j^\top\mA_j\beta} \left\{\phi_j\left(\mA_j^\top x + \alpha\right) + \frac{1}{2\eta_j} \|\mA_j(\mA_j^\top\mA_j)^{-1}\alpha\|^2\right\} \\
	&\overset{\eqref{eq:argmin_rule}}{=}x + \mA_j (\mA_j^\top\mA_j)^{-1}\Bigl(\argmin_{\theta=\alpha + \mA_j^\top x} \left\{\phi_j\left(\theta\right) + \frac{1}{2\eta_j} \|\mA_j(\mA_j^\top\mA_j)^{-1}(\theta - \mA_j^\top x)\|^2\right\} - \mA_j^\top x\Bigr).
\end{align*}
Note that
\begin{align*}
	\|\mA_j(\mA_j^\top\mA_j)^{-1}(\theta - \mA_j^\top x)\|^2 
	&= (\theta - \mA_j^\top x)^\top (\mA_j^\top\mA_j)^{-1}\mA_j^\top\mA_j(\mA_j^\top\mA_j)^{-1}(\theta - \mA_j^\top x) \\
	&= \|\theta - \mA_j^\top x\|^2_{(\mA_j^\top\mA_j)^{-1}},
\end{align*}
where for any positive semi-definite matrix $\mW$ we denote $\|x\|_{\mW}^2\eqdef x^\top \mW x$. Denoting similarly $\prox^{\mW}_{\eta_j \phi_j}(x)\eqdef \argmin_{\theta}\{\phi_j(\theta) + \frac{1}{2\eta_j}\|\theta - x\|_\mW^2\}$, we obtain
\begin{align*}
	\proxj(x) 
	&=x + \mA_j (\mA_j^\top\mA_j)^{-1}\left( \argmin_{\theta\in\RR^{d_j}} \left\{\phi_j\left(\beta\right) + \frac{1}{2\eta_j} \|\theta - \mA_j^\top x\|_{(\mA_j^\top\mA_j)^{-1}}\right\}  - \mA_j^\top x \right) \\
	&=x + \mA_j (\mA_j^\top\mA_j)^{-1}\left( \prox_{\eta_j\phi_j}^{(\mA_j^\top\mA_j)^{-1}}\left(\mA_j^\top x\right)  - \mA_j^\top x \right).
\end{align*}
Thus, we only need to know how to efficiently evaluate $\prox_{\lambda \phi_j}^{(\mA_j^\top\mA_j)^{-1}}(z)$ for arbitrary $\lambda>0$ and $z\in\RR^{d_j}$, assuming that matrix $(\mA_j^\top\mA_j)^{-1}$ can be precomputed. For example, if $\mA_j=a_j\in\RR^d$, then
\begin{align*}
	\prox_{\eta_j \phi_j}^{(a_j^\top a_j)^{-1}}(x) = \prox_{\eta_j \|a_j\|^2 \phi_j}(x).
\end{align*}

If, in addition, $\phi_j\colon \RR \to \RR$ is given by
\begin{align*}
	\phi_j(z) = \begin{cases} b_jz, & \text{if } z\le 0,\\ c_j z, & \text{otherwise} \end{cases}
\end{align*}
with some $b_j, c_j\in\RR$, $b_j<c_j$, then $\prox_{\lambda \phi_j}(z) = z - \lambda b_j $ for $z\le\lambda b_j$, $\prox_{\lambda \phi_j}(z)=0$ for $z\in(\lambda b_j, \lambda c_j]$ and $\prox_{\lambda \phi_j}(z)=z - \lambda c_j$ for $z> \lambda c_j$. Therefore,
\begin{align*}
	\prox_{\eta_j g_j}(x)
	= \begin{cases} x - \eta_j a_j b_j, &\text{if } a_j^\top x\le \|a_j\|^2 b_j,\\ x - \frac{a_j^\top x}{\|a_j\|^2}a_j, & \text{if } \|a_j\|^2b_j\le a_j^\top x \le \|a_j\|^2c_j , \\ x - \eta_j a_j c_j, &\text{otherwise} \end{cases}.
\end{align*}
Note that if $\|a_j\|^2b_j\le a_j^\top x \le \|a_j\|^2c_j $, then $a_j^\top \prox_{\eta_j g_j}(x) =0 $.

\newpage
\section{Inequalities Related to Smoothness, Convexity and Proximal Operators} \label{sec:basic_facts}

Since many of our proofs are easier to write when one uses Bregman divergences, we will formulate most of the required properties in terms of $D_f(\cdot, \cdot)$.

\begin{proposition}
	Let $f$ be convex and $L$-smooth, then we have for any $x, y$
	\begin{align}
		&\|\nabla f(x) - \nabla f(y)\|^2
		\le 2L D_f(x, y), \label{eq:grad_dif_bregman}\\
		&\|\nabla f(x) - \nabla f(y)\|^2
		\le L\<\nabla f(x) - \nabla f(y), x - y>.\label{eq:grad_dif_scalar_prod}
	\end{align}
\end{proposition}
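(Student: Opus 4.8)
The plan is to prove \eqref{eq:grad_dif_bregman} by a standard ``shift and minimize'' argument and then to deduce \eqref{eq:grad_dif_scalar_prod} from it by symmetrizing in $x$ and $y$.

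First I would fix $y$ and introduce the auxiliary function $h(z)\eqdef f(z)-\<\nabla f(y),z>$. Since $h$ differs from $f$ by a linear term, $h$ is convex and $L$-smooth, $\nabla h(z)=\nabla f(z)-\nabla f(y)$, and $\nabla h(y)=0$, so $y$ is a global minimizer of $h$. Applying the descent lemma (the $L$-smoothness quadratic upper bound for $h$) at the point $x$ with the gradient step of size $1/L$,
\[
 h(y)=\min_{z}h(z)\le h\!\left(x-\tfrac1L\nabla h(x)\right)\le h(x)-\tfrac1{2L}\|\nabla h(x)\|^2 .
\]
Rearranging gives $\tfrac1{2L}\|\nabla h(x)\|^2\le h(x)-h(y)$; substituting $\nabla h(x)=\nabla f(x)-\nabla f(y)$ and noting that $h(x)-h(y)=f(x)-f(y)-\<\nabla f(y),x-y>=D_f(x,y)$ yields \eqref{eq:grad_dif_bregman}.

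Next, to obtain \eqref{eq:grad_dif_scalar_prod}, I would apply \eqref{eq:grad_dif_bregman} both as stated and with the roles of $x$ and $y$ swapped, and add the two bounds:
\[
 2\|\nabla f(x)-\nabla f(y)\|^2\le 2L\bigl(D_f(x,y)+D_f(y,x)\bigr).
\]
Expanding the right-hand side, the terms $f(x)$ and $f(y)$ cancel and one is left with the identity $D_f(x,y)+D_f(y,x)=\<\nabla f(x)-\nabla f(y),x-y>$; dividing by $2$ gives \eqref{eq:grad_dif_scalar_prod}.

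The only step that needs a little care is the descent-lemma inequality: one should check that the quadratic model $u\mapsto h(x)+\<\nabla h(x),u-x>+\tfrac L2\|u-x\|^2$ is minimized at $u=x-\tfrac1L\nabla h(x)$ with minimal value $h(x)-\tfrac1{2L}\|\nabla h(x)\|^2$, and that the gradient in the Bregman divergence is evaluated at the second argument so that $h(x)-h(y)$ is exactly $D_f(x,y)$. Everything else is elementary bookkeeping, and no assumptions beyond convexity and $L$-smoothness of $f$ are used.
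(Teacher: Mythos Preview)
Your proof is correct and is the standard textbook argument for these inequalities. The paper itself states this proposition without proof in Appendix~\ref{sec:basic_facts}, treating it as a well-known fact, so there is no paper proof to compare against; your ``shift and minimize'' derivation of \eqref{eq:grad_dif_bregman} and the symmetrization $D_f(x,y)+D_f(y,x)=\langle\nabla f(x)-\nabla f(y),x-y\rangle$ to obtain \eqref{eq:grad_dif_scalar_prod} are exactly what one would expect.
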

\begin{proposition}
	Let $f$ be $\mu$-strongly convex, including the case $\mu=0$, which holds when $f$ is simply convex. Then, for arbitrary $x$ and $y$
	\begin{align}
		\frac{\mu}{2}\|x - y\|^2 + D_f(x, y)
		\le \<\nabla f(x) - \nabla f(y), x - y>.\label{eq:scal_prod_cvx}
	\end{align}
\end{proposition}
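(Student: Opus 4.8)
The plan is to derive the inequality directly from the definition of $\mu$-strong convexity, using Bregman divergences only for bookkeeping. The first step is the elementary polarization-type identity
\[
  \<\nabla f(x) - \nabla f(y),\, x - y> \;=\; D_f(x, y) + D_f(y, x),
\]
which follows by writing out both Bregman divergences from their definition and cancelling the function-value terms $f(x)$ and $f(y)$, leaving exactly $\<\nabla f(x), x-y> - \<\nabla f(y), x-y>$.

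The second step is to apply the defining inequality of $\mu$-strong convexity, which after rearrangement states that $D_f(x,y) \ge \frac{\mu}{2}\|x-y\|^2$ for all $x, y$ (in particular, interchanging the roles of $x$ and $y$, also $D_f(y,x) \ge \frac{\mu}{2}\|x-y\|^2$). Substituting this lower bound for the term $D_f(y,x)$ in the identity above gives
\[
  \<\nabla f(x) - \nabla f(y),\, x - y> \;=\; D_f(x,y) + D_f(y,x) \;\ge\; D_f(x,y) + \frac{\mu}{2}\|x-y\|^2,
\]
which is exactly the assertion. The reasoning is uniform in $\mu \ge 0$: when $\mu = 0$ it uses only the non-negativity $D_f(y,x) \ge 0$ (plain convexity), and no additional facts are needed for $\mu > 0$.

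Since the statement is a one-line consequence of the definitions, there is no genuinely difficult step. The only things to watch are (i) keeping $D_f(x,y)$ untouched and bounding from below only the second copy $D_f(y,x)$ — bounding both would instead yield the different, stronger strong-monotonicity estimate $\<\nabla f(x)-\nabla f(y),x-y> \ge \mu\|x-y\|^2$ rather than the inequality claimed here — and (ii) that the polarization identity is precisely what makes the term $D_f(x,y)$ reappear on the right-hand side.
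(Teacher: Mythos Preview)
Your proof is correct. The paper does not supply its own proof of this proposition --- it is stated without proof among the basic facts in Appendix~\ref{sec:basic_facts} --- so there is nothing to compare against; your route via the identity $\langle \nabla f(x)-\nabla f(y),\,x-y\rangle = D_f(x,y)+D_f(y,x)$ followed by the strong-convexity lower bound $D_f(y,x)\ge \tfrac{\mu}{2}\|x-y\|^2$ is the standard and cleanest argument.
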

The proposition above is convenient for proofs of SVRG and SAGA, but it is not tight if we want to show that Gradient Descent converges for any $\eta\le \frac{2}{L + \mu}$ when the objective is $\mu$-strongly convex. To make the analysis tighter, we require the following statement.
\begin{proposition}
	Let $f$ be differentiable and $\mu$-strongly convex. Then we have for any $x$ and $y$
	\begin{align}
		\mu\| x - y\|^2
		\le \<\nabla f(x) - \nabla f(y), x - y>. \label{eq:scal_prod_str_cvx}
	\end{align}
	Moreover, if $f$ is also $L$-smooth, then
	\begin{align}
		\frac{\mu L}{L + \mu}\|x - y\|^2 + \frac{1}{L + \mu}\|\nabla f(x) - \nabla f(y)\|^2
		\le \<\nabla f(x) - \nabla f(y), x - y>. \label{eq:scal_prod_tight_str_cvx}
	\end{align}
\end{proposition}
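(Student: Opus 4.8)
I would prove the two displays separately, starting with the easier one.

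For \eqref{eq:scal_prod_str_cvx}, the plan is to write down the defining inequality of $\mu$-strong convexity twice, once as stated and once with the roles of $x$ and $y$ interchanged:
\[
f(x) \ge f(y) + \<\nabla f(y), x-y> + \tfrac{\mu}{2}\|x-y\|^2, \qquad f(y) \ge f(x) + \<\nabla f(x), y-x> + \tfrac{\mu}{2}\|x-y\|^2,
\]
and add them. The function values cancel and, after moving the gradient terms to one side, one is left exactly with $\mu\|x-y\|^2 \le \<\nabla f(x) - \nabla f(y), x-y>$. This step is immediate.

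For \eqref{eq:scal_prod_tight_str_cvx}, the idea is the standard reduction to the convex case via the shifted function $\phi(z) \eqdef f(z) - \tfrac{\mu}{2}\|z\|^2$, so that $\nabla \phi(z) = \nabla f(z) - \mu z$. I would first argue that $\phi$ is convex (this is just $\mu$-strong convexity of $f$) and $(L-\mu)$-smooth. The smoothness of $\phi$ is the point requiring a small argument: from $L$-smoothness of $f$ one gets $\<\nabla f(x)-\nabla f(y), x-y> \le L\|x-y\|^2$, and combined with \eqref{eq:scal_prod_str_cvx} this pins $\<\nabla\phi(x)-\nabla\phi(y),x-y> = \<\nabla f(x)-\nabla f(y),x-y> - \mu\|x-y\|^2$ into $[0,(L-\mu)\|x-y\|^2]$, from which $(L-\mu)$-smoothness of the convex function $\phi$ follows (equivalently one may just quote that $f$ $L$-smooth $\Rightarrow$ $f - \tfrac{\mu}{2}\|\cdot\|^2$ is $(L-\mu)$-smooth). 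Then I apply the co-coercivity inequality \eqref{eq:grad_dif_scalar_prod} to the convex $(L-\mu)$-smooth function $\phi$:
\[
\|\nabla\phi(x)-\nabla\phi(y)\|^2 \le (L-\mu)\<\nabla\phi(x)-\nabla\phi(y), x-y>.
\]
Substituting $\nabla\phi(z) = \nabla f(z) - \mu z$, expanding both sides (using $\|\nabla f(x)-\nabla f(y)-\mu(x-y)\|^2 = \|\nabla f(x)-\nabla f(y)\|^2 - 2\mu\<\nabla f(x)-\nabla f(y),x-y> + \mu^2\|x-y\|^2$ on the left and $\<\nabla f(x)-\nabla f(y),x-y> - \mu\|x-y\|^2$ on the right), collecting terms and dividing by $L+\mu$ yields \eqref{eq:scal_prod_tight_str_cvx} after routine rearrangement.

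The main thing to watch, rather than a genuine obstacle, is the degenerate case $\mu = L$: there $\phi$ has zero gradient variation, the co-coercivity reduction is vacuous, and one checks \eqref{eq:scal_prod_tight_str_cvx} directly from $\nabla f(x)-\nabla f(y) = \mu(x-y)$, which makes the inequality an equality. Apart from that, the only care needed is keeping the algebra in the substitution step straight so that the coefficients $\tfrac{\mu L}{L+\mu}$ and $\tfrac{1}{L+\mu}$ come out correctly; everything else is mechanical.
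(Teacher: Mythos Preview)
Your proof is correct. The paper, however, does not prove this proposition at all: it is stated without proof in the appendix on basic facts (Section~\ref{sec:basic_facts}), treated as a standard inequality from convex analysis, with only the remark that \eqref{eq:scal_prod_tight_str_cvx} is the tightest such bound and reduces to \eqref{eq:grad_dif_scalar_prod} when $\mu=0$.

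Your argument is the standard one: symmetrize the strong-convexity inequality to get \eqref{eq:scal_prod_str_cvx}, then pass to $\phi = f - \tfrac{\mu}{2}\|\cdot\|^2$ and apply co-coercivity of the convex $(L-\mu)$-smooth function to obtain \eqref{eq:scal_prod_tight_str_cvx}. The algebra you outline checks out, and your handling of the degenerate case $L=\mu$ is appropriate. So there is nothing to compare against; you have supplied a valid proof where the paper simply quotes the result.
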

This is the tightest inequality one can get and, in particular, \eqref{eq:scal_prod_tight_str_cvx} implies \eqref{eq:grad_dif_scalar_prod} when $\mu=0$.

\begin{figure}[!h]
	\center
	\includegraphics[scale=0.04]{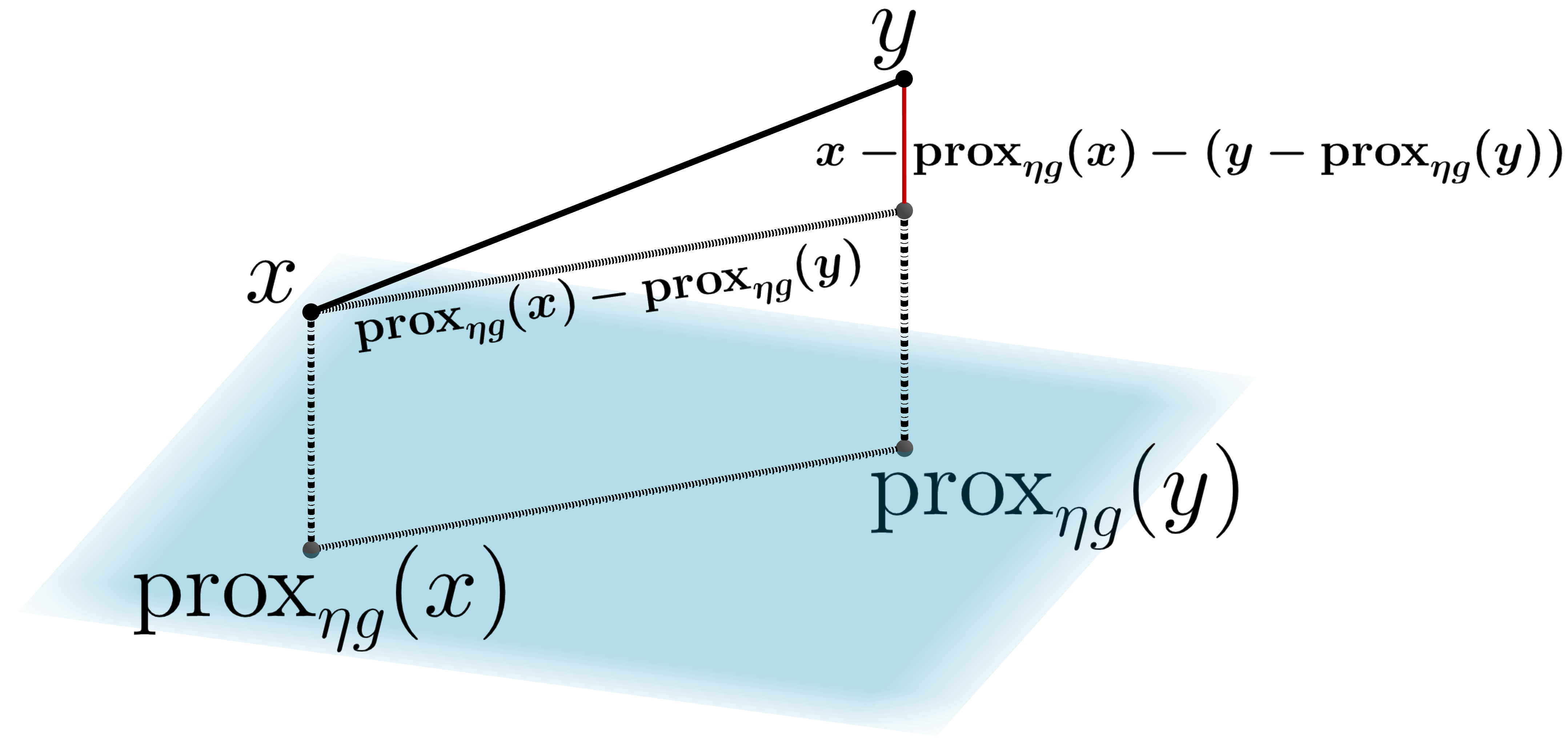}
	\caption{Illustration of property~\eqref{eq:nonexp} with characteristic function of a linear subspace, $g(x)=\ind_{\{x \;:\; a^\top x = b\}}$. In this case the proximal operator returns the projection of a point onto the subspace, and Inequality~\eqref{eq:nonexp} becomes identity and follows from Pythagorean theorem.}
	\label{fig:prox}
\end{figure}

An important property of the proximal operator is firm non-expansiveness:
\begin{proposition} Let $g:\RR^d \to \RR \cup \{+\infty\}$ be a proper closed  convex function. Then its proximal operator is firmly non-expansive. That is,  for all $\eta\in \RR$, 
\begin{align}
    \|\prox_{\eta g}(x) - \prox_{\eta g}(z)\|^2
    \le \|x - z\|^2 - \left(1 + \frac{1}{\eta L_g}\right)\|x - \prox_{\eta g}(x) - (z - \prox_{\eta g}(z))\|^2, \label{eq:nonexp}
\end{align}
where $L_g\in\RR\cup \{+\infty\}$ is the smoothness constant of function $g$ (for non-smooth functions, $L_g=+\infty$).
\end{proposition}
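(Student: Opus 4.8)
The plan is to reduce everything to a single scalar inequality coming from the first-order optimality condition of the prox. First I would write $p\eqdef\prox_{\eta g}(x)$ and $q\eqdef\prox_{\eta g}(z)$ and introduce the two difference vectors $a\eqdef p-q$ and $b\eqdef(x-p)-(z-q)$. Observe that $x-z=a+b$, that $\|a\|^2$ is exactly the left-hand side of \eqref{eq:nonexp}, and that $\|b\|^2=\|x-\prox_{\eta g}(x)-(z-\prox_{\eta g}(z))\|^2$ is precisely the quantity multiplied by $\bigl(1+\tfrac{1}{\eta L_g}\bigr)$ on the right-hand side. Expanding $\|x-z\|^2=\|a+b\|^2=\|a\|^2+2\<a,b>+\|b\|^2$, inequality \eqref{eq:nonexp} becomes equivalent to $2\<a,b>\ge\tfrac{1}{\eta L_g}\|b\|^2$, with the convention $\tfrac{1}{\eta L_g}\eqdef0$ when $L_g=+\infty$. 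I would assume $\eta>0$ throughout, which is what makes the prox single-valued and the optimality condition below meaningful; the general $\eta\in\RR$ wording should be read with this caveat.

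Next I would establish that scalar inequality. The optimality conditions for the two prox subproblems read $\tfrac1\eta(x-p)\in\partial g(p)$ and $\tfrac1\eta(z-q)\in\partial g(q)$, i.e.\ $\tfrac1\eta b$ is the difference of a subgradient of $g$ at $p$ and one at $q$. If $g$ is merely proper closed convex (the non-smooth case, $L_g=+\infty$), monotonicity of $\partial g$ gives $\<\tfrac1\eta b,\,p-q>\ge0$, i.e.\ $\<a,b>\ge0$, which is $\ge\tfrac12\cdot\tfrac1{\eta L_g}\|b\|^2$ since the right-hand side vanishes. If in addition $g$ is $L_g$-smooth, then $\partial g$ is single-valued and $\nabla g(p)-\nabla g(q)=\tfrac1\eta b$, so co-coercivity of the gradient --- inequality \eqref{eq:grad_dif_scalar_prod} applied to $g$ --- gives $\|\nabla g(p)-\nabla g(q)\|^2\le L_g\<\nabla g(p)-\nabla g(q),\,p-q>$, which after substituting and rearranging becomes $\<a,b>\ge\tfrac{1}{\eta L_g}\|b\|^2$. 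In both regimes $2\<a,b>\ge\tfrac{1}{\eta L_g}\|b\|^2$; in fact the smooth case delivers the stronger $2\<a,b>\ge\tfrac{2}{\eta L_g}\|b\|^2$, so the constant in \eqref{eq:nonexp} is slightly conservative and I would use the weaker form to treat both cases uniformly.

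Finally I would plug $2\<a,b>\ge\tfrac{1}{\eta L_g}\|b\|^2$ into $\|x-z\|^2=\|a\|^2+2\<a,b>+\|b\|^2$ to obtain $\|x-z\|^2\ge\|a\|^2+\bigl(1+\tfrac{1}{\eta L_g}\bigr)\|b\|^2$, and rearranging gives exactly \eqref{eq:nonexp}. There is no serious obstacle here --- the statement is classical --- and the only points that need a little care are (i) identifying the prox residuals $a$ and $b$ so that the expansion of $\|a+b\|^2$ reproduces the three terms of \eqref{eq:nonexp} correctly, and (ii) handling the smooth and non-smooth cases as one argument, invoking the sharp co-coercivity estimate so that the $\tfrac{1}{\eta L_g}$ improvement over plain firm non-expansiveness genuinely appears and degenerates correctly to ordinary firm non-expansiveness when $L_g=+\infty$.
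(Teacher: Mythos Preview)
Your argument is correct. The paper does not actually supply a proof of this proposition: it is listed in Section~\ref{sec:basic_facts} among standard inequalities stated without proof, so there is no ``paper's own proof'' to compare against. Your reduction via $a=p-q$, $b=(x-p)-(z-q)$, together with monotonicity of $\partial g$ in the non-smooth case and co-coercivity \eqref{eq:grad_dif_scalar_prod} in the smooth case, is the natural and standard route to this inequality. Your observations that one should read the statement with $\eta>0$ and that the smooth case in fact yields the stronger bound $2\<a,b>\ge\tfrac{2}{\eta L_g}\|b\|^2$ (so the constant $1+\tfrac{1}{\eta L_g}$ could be sharpened to $1+\tfrac{2}{\eta L_g}$) are both accurate.
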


Inequality~\eqref{eq:nonexp} was also the main inspiration for our proofs. We derived the method by playing with this inequality and trying to see how it can be combined with a full gradient step $\nabla f$, and later extended it. Moreover, we would like to note that Equation~\ref{eq:nonexp} is tight if $g(x) = \ind_{\{x\;:\;a^\top x = b\}}$ for some vector $a$ and scalar $b$, as is shown in Figure~\ref{fig:prox}.

\clearpage
\section{Optimality Conditions} \label{sec:Opt_Cond}

We now comment on the nature of Assumption~\ref{as:optimality}.
In view of the first-order necessary and sufficient condition for the solution of~\eqref{eq:pb_general}, we have
\begin{align*}
    x^* \in \cX^*  \quad \Leftrightarrow \quad  0 \in \partial F(x^*) = \nabla f(x^*) + \partial (g + R)(x^*).
\end{align*}
By the weak sum rule \cite[Cor 3.38]{beck-book-first-order}, we have \[\partial F(x)  \supseteq  \nabla f(x) + \frac{1}{m}\sum_{j=1}^m\partial g_j(x) +\partial  R(x)\] for all $x\in \dom F \supseteq \cX^*$. Under the regularity condition $\cap_{j=1}^k(\dom g_j) \cap_{j=k+1}^m {\rm ri} (\dom g_j)\cap {\rm ri}( \dom R) \neq \emptyset$, where $g_1,\dotsc, g_k$ are polyhedral functions, the inclusions becomes an identity \cite[Thm 23.8]{rockafellar2015convex}, which means that Assumption~\ref{as:optimality} is satisfied.

For functions $g_j$ of the form $g_j(x) = \phi_j(\mA_j^\top x)$, where $\phi_j:\RR^{d_j}\to \RR\cup \{+\infty\}$ are proper closed convex functions and $\mA_j\in \RR^{d\times d_j}$, we shall instead consider the following (slightly stronger) assumption:
\begin{assumption}\label{as:optimality2}   There exists $x^*\in \cX^*$ and vectors $y_1^*\in \mA_1 \partial \phi_1( \mA_1^\top x^*), \dots, y_m^*\in  \mA_m \partial \phi_m( \mA_m^\top x^*)$ and $r^*\in \partial R(x^*)$ such that $        \nabla f(x^*) + \avejm y_j^* + r^* = 0.$
\end{assumption}

Since $\mA_j \partial \phi_j( \mA_j^\top x) \subseteq \partial g_j(x)$ for all $x\in \dom g_j$  \cite[Thm 3.43]{beck-book-first-order}, Assumption~\ref{as:optimality2}  is indeed stronger than Assumption~\ref{as:optimality}. 
If $\Range{\mA_j^\top}$ contains a point from ${\rm ri}(\dom g_j)$, or $g_j$ is polyhedral and $\Range{\mA_j^\top}$ contains a point from mere $\dom g_j$, then $\partial g_j(x) = \mA_j\partial \phi_j(\mA_j^\top x)$ for any $x$ \cite[Thm 23.9]{rockafellar2015convex}, and these two assumptions are the same.

Below we provide another stationarity condition that shows why $x^*$ is a fixed-point of our method.

\begin{lemma}[Optimality conditions]\label{lem:optimality}
    Let $x^*$ be a solution of~\eqref{eq:pb_general} and let Assumption~\ref{as:optimality} be satisfied. Then for any $\eta,\eta_j\in \RR$,
    \begin{align*}
        x^* &= \proxR(x^* - \eta\nabla f(x^*) - \eta y^*), \qquad x^* = \proxj(x^* + \eta_j y_j^*).
    \end{align*}    
\end{lemma}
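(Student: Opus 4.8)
The plan is to verify the two fixed-point identities directly from the definition of the proximal operator and Assumption~\ref{as:optimality}, using the standard characterization $u = \prox_{\eta h}(x) \Leftrightarrow \frac{1}{\eta}(x - u) \in \partial h(u)$ (valid for $\eta>0$; the cases $\eta=0$ and $\eta<0$ being either trivial or needing the convention that $\prox$ reduces to the identity, so I would implicitly assume $\eta,\eta_j>0$, which is all that is used in the paper). First I would recall from Assumption~\ref{as:optimality} that there exist $y_j^*\in\partial g_j(x^*)$ and $r^*\in\partial R(x^*)$ with $\nabla f(x^*) + \frac{1}{m}\sum_{j=1}^m y_j^* + r^* = 0$, i.e. $r^* = -\nabla f(x^*) - y^*$ where $y^* = \frac{1}{m}\sum_j y_j^*$.

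For the second identity, I would start from $y_j^*\in\partial g_j(x^*)$ and rewrite this as $\eta_j y_j^* \in \eta_j\partial g_j(x^*) = \partial(\eta_j g_j)(x^*)$, hence $\frac{1}{\eta_j}\big((x^* + \eta_j y_j^*) - x^*\big) = y_j^* \in \partial(\eta_j g_j)(x^*)$. By the proximal characterization this says exactly $x^* = \prox_{\eta_j g_j}(x^* + \eta_j y_j^*)$. For the first identity, I would similarly take $r^*\in\partial R(x^*)$, so $\eta r^* \in \partial(\eta R)(x^*)$, and substitute $r^* = -\nabla f(x^*) - y^*$ to get $\eta r^* = (x^* - \eta\nabla f(x^*) - \eta y^*) - x^*$; thus $\frac{1}{\eta}\big((x^* - \eta\nabla f(x^*) - \eta y^*) - x^*\big) = r^* \in \partial(\eta R)(x^*)$, which by the proximal characterization is precisely $x^* = \prox_{\eta R}(x^* - \eta\nabla f(x^*) - \eta y^*)$.

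There is really no main obstacle here — the statement is essentially a restatement of the optimality conditions in proximal-operator language, and the only thing to be careful about is the direction of the proximal characterization (that $\prox_{\eta h}$ is single-valued and its defining subdifferential inclusion is an equivalence, not just an implication, because $h$ is proper closed convex so the objective $h(u) + \frac{1}{2\eta}\|u-x\|^2$ is strictly convex with a unique minimizer). I would state that equivalence once at the start (citing the standard fact, or Appendix~\ref{sec:basic_facts}) and then the two computations are each one line. I would also note in passing that this lemma is what justifies calling $x^*$ a fixed point of Algorithm~\ref{alg:double}: plugging $x^t = x^*$, $v^t = \nabla f(x^*)$, $y_j^t = y_j^*$ into the updates for $z^t$ and $x^{t+1}$ reproduces $x^*$, and the $y$-update leaves $y_j^*$ unchanged.
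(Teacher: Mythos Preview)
Your proposal is correct and follows essentially the same approach as the paper: both arguments verify the fixed-point identities via the first-order (subdifferential) characterization of the proximal operator, using Assumption~\ref{as:optimality} to supply $r^*\in\partial R(x^*)$ with $r^*=-\nabla f(x^*)-y^*$ and $y_j^*\in\partial g_j(x^*)$. One small slip: in your second identity you write $y_j^*\in\partial(\eta_j g_j)(x^*)$ where you mean $y_j^*\in\partial g_j(x^*)$ (the intermediate line $\eta_j y_j^*\in\partial(\eta_j g_j)(x^*)$ is unnecessary once you apply your stated characterization directly with $h=g_j$).
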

\begin{proof}
    Let $z = \proxR(x^* - \eta\nabla f(x^*) - \eta y^*) = \argmin_u \{\eta R(u) + \frac{1}{2}\|u - (x^* - \eta\nabla f(x^*) - \eta y^*)\|^2 \}$. $R$ is convex, so the problem inside $\argmin$ is strongly convex, and the necessary and sufficient condition for $z$ to be its solution is
    \begin{align*}
        0\in z - x^* + \eta\nabla f(x^*) + \eta y^* + \eta \partial R(z).
    \end{align*}
    By Assumption~\ref{as:optimality} it holds for $z=x^*$, implying the first equation that we want to prove. The second one follows by exactly the same argument applied to $\argmin_u \{\eta_j g_j(u) + \frac{1}{2}\|u - (x^* + \eta_j y_j^*)\|^2 \}$.
\end{proof}


\clearpage
\section{Convergence Proofs}
In this section, we provide the proofs of our convergence results. Each lemma, theorem and corollary is first restated and only then proved to simplify the reading.

\subsection{Proof of Lemma~\ref{lem:gd} (Gradient Descent)}\label{ap:gd}


Here we prove that Gradient Descent update on $f$ satisfies our assumption on the method with the best possible stepsizes.
\begin{customlem}{\ref{lem:gd}}
	If $f$ is convex, Gradient Descent satisfies Assumption~\ref{as:method}(a) with any $\eta_0 < \frac{2}{L}$, $\omega = 2 - \eta_0 L$ and $\cM^t = 0$. If $f$ is $\mu$-strongly convex, Gradient Descent satisfies Assumption~\ref{as:method}(b) with $\eta_0 = \frac{2}{L + \mu}$, $\omega = 1$ and $\cM^t=0$.
\end{customlem}
\begin{proof}
	Since we consider Gradient Descent, we have
	\begin{align*}
		w^t
		&= x^t - \eta \nabla f(x^t).
	\end{align*}
	First, if $f$ is convex and smooth, then for any $\eta \le \eta_0 < \frac{2}{L}$
	\begin{align*}
		\|w^t - w^*\|^2
		&= \|x^t - x^*\|^2 - 2 \eta\<\nabla f(x^t) - \nabla f(x^*), x^t - x^*> + \eta^2\|\nabla f(x^t) - \nabla f(x^*)\|^2 \\
		&\le \|x^t - x^*\|^2 - \eta(2 - \eta_0 L)\<\nabla f(x^t) - \nabla f(x^*), x^t - x^*> \\
		&\quad -  \eta \eta_0 L\<\nabla f(x^t) - \nabla f(x^*), x^t - x^*> + \eta\eta_0\|\nabla f(x^t) - \nabla f(x^*)\|^2 \\
		&\overset{\eqref{eq:grad_dif_scalar_prod}}{\le} \|x^t - x^*\|^2 - \eta(2 - \eta_0 L)\<\nabla f(x^t) - \nabla f(x^*), x^t - x^*>\\
		 &\overset{\eqref{eq:scal_prod_cvx}}{\le} \|x^t - x^*\|^2 - \eta\left(2 - \eta_0L \right)D_f(x^t, x^*).	
	\end{align*}
	Now let us consider $\mu$-strongly convex $f$. We have
	\begin{align*}
		\|w^t - w^*\|^2
		&= \|x^t - x^*\|^2 - 2 \eta\<\nabla f(x^t) - \nabla f(x^*), x^t - x^*> + \eta^2\|\nabla f(x^t) - \nabla f(x^*)\|^2 \\
		&\overset{\eqref{eq:scal_prod_tight_str_cvx}}{\le} \left(1 - \frac{2 \eta \mu L}{L + \mu}\right)\|x - y\|^2 - \eta\left(\frac{2}{L + \mu} - \eta\right) \|\nabla f(x^t) - \nabla f(x^*)\|^2 \\
		&\overset{\eqref{eq:scal_prod_str_cvx}}{\le} \left(1 - \frac{2 \eta \mu L}{L + \mu}\right)\|x - y\|^2 - \eta\left(\frac{2}{L + \mu} - \eta\right) \mu^2\|x^t - x^*\|^2  \\
		&= (1 - \eta\mu)^2\|x^t - x^*\|^2 \\
		&\le (1 - \eta\mu)\|x^t - x^*\|^2.
	\end{align*}
	The last step simply uses $1 - \eta\mu\le 1$, which, of course, makes our guarantees slightly weaker, but, on the other hand, puts Gradient Descent under the umbrella of Assumption~\ref{as:method}.
\end{proof}

\subsection{Key lemma}\label{ap:key_lemma}
The  result below is  the most important lemma of our paper as it lies at the core of our analysis. It provides a very generic statement about the step with stochastic proximal operators.  At the same time, it is a mere corollary of firm non-expansiveness of the proximal operator.

\begin{lemma} \label{lem:key}
    Let $z^t = \proxR(w^t - \eta y^t)$ and $x^{t+1} = \proxj(z^{t} + \eta_j  y_j^t)$, where $j$ is sampled from $\{1,\dotsc, m\}$ with probabilities $\{p_1,\dotsc, p_m\}$, $\eta_j = \frac{\eta}{m p_j}$ and $\eta$ is a positive number. If $y_j^{t+1} = y_j^t + \frac{1}{\eta_j}(z^t - x^{t+1})$ and $y_k^{t+1} = y_k^t$ for all $k\neq j$, it holds
    \begin{align*}
        \EE \left[\|x^{t+1} - x^*\|^2 + \cY^{t+1} \right]
        \le \EE\left[\|w^t - w^*\|^2 + \left(1 - \frac{\gamma}{m(1+\gamma)}\right)\cY^t - \|z^t - w^t - (x^* - w^*)\|^2\right],
    \end{align*}
    where $\gamma \eqdef \min_{j=1,\dotsc,m} \frac{1}{\eta_j L_j}$ and $L_j\in\RR\cup \{+\infty\}$ is the smoothness constant of $g_j$.
\end{lemma}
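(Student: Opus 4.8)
The plan is to derive everything from the firm non-expansiveness inequality \eqref{eq:nonexp} applied to the two proximal steps, and then take conditional expectation over the random index $j$. First I would apply \eqref{eq:nonexp} to the step $x^{t+1} = \prox_{\eta_j g_j}(z^t + \eta_j y_j^t)$, comparing it with the fixed point $x^* = \prox_{\eta_j g_j}(x^* + \eta_j y_j^*)$ guaranteed by Lemma~\ref{lem:optimality}. This yields
\[
\|x^{t+1} - x^*\|^2 \le \|z^t + \eta_j y_j^t - (x^* + \eta_j y_j^*)\|^2 - \left(1 + \tfrac{1}{\eta_j L_j}\right)\|(z^t + \eta_j y_j^t) - x^{t+1} - (x^* + \eta_j y_j^* - x^*)\|^2.
\]
Using the update rule $y_j^{t+1} = y_j^t + \tfrac{1}{\eta_j}(z^t - x^{t+1})$, the term being subtracted is exactly $\eta_j^2\|y_j^{t+1} - y_j^*\|^2$, so the bound becomes a clean recursion linking $\|x^{t+1}-x^*\|^2$, $\|z^t + \eta_j y_j^t - x^* - \eta_j y_j^*\|^2$ and $\eta_j^2\|y_j^{t+1}-y_j^*\|^2$.

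Next I would expand the cross term: $\|z^t + \eta_j y_j^t - x^* - \eta_j y_j^*\|^2 = \|z^t - x^*\|^2 + 2\eta_j\<z^t - x^*, y_j^t - y_j^*> + \eta_j^2\|y_j^t - y_j^*\|^2$. The idea is that the inner-product term will, after taking expectation over $j$ (with $\eta_j = \eta/p_j$, so $\EE[\eta_j(\cdot)_j] = \eta \cdot \tfrac1m\sum_j(\cdot)_j$ when weighted correctly — actually $\EE_j[\eta_j h_j] = \sum_j p_j \tfrac{\eta}{p_j} h_j = \eta\sum_j h_j$), combine with the corresponding cross term coming from the $R$-prox step. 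Indeed, I would separately apply \eqref{eq:nonexp} to $z^t = \prox_{\eta R}(w^t - \eta y^t)$ versus $x^* = \prox_{\eta R}(w^* - \eta y^*)$, obtaining
\[
\|z^t - x^*\|^2 \le \|w^t - \eta y^t - (w^* - \eta y^*)\|^2 - \|z^t - x^* - (w^t - \eta y^t - (w^* - \eta y^*))\|^2.
\]
Expanding the right-hand side and the squared-norm corrections, the $\eta\<\cdot, y^t - y^*>$ terms here must cancel against the $\eta_j$-averaged inner products from the $g_j$ step; this cancellation is the structural heart of the decoupling and is what forces the choice $\eta_j = \eta/p_j$.

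Then I would take conditional expectation $\EE[\,\cdot \mid \text{past}\,]$. The key bookkeeping is the $\cY$-term: since only coordinate $j$ of the $y$-vector changes, $\EE[\cY^{t+1}] = \cY^t + \EE[\eta_j^2\|y_j^{t+1}-y_j^*\|^2] - \EE[\eta_j^2\|y_j^t - y_j^*\|^2]$ up to the $(1+\gamma)$ factor; I would carry the factor $(1+\gamma)$ through and use that the $(1+\tfrac1{\eta_j L_j})$ coefficient on the subtracted term is at least $1+\gamma$ (by definition of $\gamma = \min_j \tfrac{1}{\eta_j L_j}$), which lets me absorb $(1+\gamma)\eta_j^2\|y_j^{t+1}-y_j^*\|^2$ on the left as part of $\cY^{t+1}$. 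A short computation with the probabilities shows the net effect on $\cY^t$ is the contraction factor $1 - \tfrac{\gamma}{m(1+\gamma)}$: the $-\eta_j^2\|y_j^t-y_j^*\|^2$ term, after expectation, removes a $\tfrac1m$-fraction of each $\eta_k^2\|y_k^t-y_k^*\|^2$, and the surviving $(1+\gamma)$-weighted $+\eta_j^2\|y_j^t-y_j^*\|^2$ contributions (from expanding the cross-term square) only partially refill it, leaving the stated factor.

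Finally, the residual negative term $-\|z^t - w^t - (x^*-w^*)\|^2$ comes from the correction term in the $R$-prox inequality once the $y$-dependent parts have been matched up, together with completing the square in $w^t$. The main obstacle I expect is the algebra of collecting the cross terms so that the $y^t$-linear pieces exactly cancel between the two prox steps — getting the weights ($\eta$ vs.\ $\eta_j = \eta/p_j$ vs.\ the $\tfrac1m\sum$ from $y^t = \tfrac1m\sum_k y_k^t$) to line up is delicate, and one has to be careful that the leftover $\eta_j^2\|y_j^t - y_j^*\|^2$ and $\eta^2\|y^t - y^*\|^2$ terms are controlled (the latter via $\|y^t - y^*\|^2 = \|\tfrac1m\sum_k(y_k^t-y_k^*)\|^2 \le \tfrac1m\sum_k\|y_k^t-y_k^*\|^2$, i.e.\ Jensen) so that after absorbing into $\cY$ the stated contraction survives. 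Everything else is firm non-expansiveness plus Lemma~\ref{lem:optimality} and routine expansion of squared norms.
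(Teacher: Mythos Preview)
Your plan is correct and follows essentially the same route as the paper: two applications of firm non-expansiveness \eqref{eq:nonexp} combined with Lemma~\ref{lem:optimality}, expansion of the squares, cancellation of the $\eta\langle \cdot,\, y^t - y^*\rangle$ cross terms between the $R$-prox and $g_j$-prox steps, and then the $\cY$-bookkeeping using that only coordinate $j$ changes.

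One clarification worth flagging: you will not need Jensen on $\|y^t - y^*\|^2$. When you expand both $\|w^t - \eta y^t - (w^* - \eta y^*)\|^2$ and the correction term $\|w^t - \eta y^t - z^t - (w^* - \eta y^* - x^*)\|^2$ coming from the $R$-prox inequality, each produces a copy of $\eta^2\|y^t - y^*\|^2$ with opposite signs, and they cancel \emph{exactly}. What survives is
\[
\|z^t - x^*\|^2 \le \|w^t - w^*\|^2 - 2\eta\langle z^t - x^*,\, y^t - y^*\rangle - \|z^t - w^t - (x^* - w^*)\|^2,
\]
so the negative residual term appears with its full coefficient, and the remaining $-2\eta\langle z^t - x^*,\, y^t - y^*\rangle$ is precisely what cancels the averaged cross term from the $g_j$-step. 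If you instead bounded $\eta^2\|y^t - y^*\|^2$ via Jensen and tried to absorb it into $\cY^t$, you would forfeit this exact cancellation and with it either the clean negative term (needed later for Theorem~\ref{th:lin_conv_lin_model}) or the stated contraction factor on $\cY^t$.
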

\begin{proof}
    Mention that $x^* = \proxj(x^* + \eta_j  y_j^*)$ by optimality condition. In addition, it holds by definition $y_j^{t+1} = \frac{1}{\eta_j}(z^t + \eta_j y_j^t - x^{t+1})$, so property~\eqref{eq:nonexp} yields
    \begin{align*}
        \|x^{t+1} - x^*\|^2 + \left(1 + \frac{1}{\eta_j L_j}\right)\eta_j^2 \|y_j^{t+1} - y_j^*\|^2
        &\le \|z^t + \eta_j y_j^t - (x^* + \eta_j y_j^*)\|^2
    \end{align*}
    and we can replace $1+\frac{1}{\eta_j L_j}$ with $1+\gamma$ since $\gamma\le \frac{1}{\eta_j L_j}$.
    
    Let $\EE_j$ be the expectation with respect to sampling of $j$. Then, we observe
    \begin{align}
        &\EE_j \|z^t + \eta_j y_j^t - (x^* + \eta_j y_j^*)\|^2\notag \\
        &= \|z^t - x^*\|^2 + \EE_j\left[\frac{\eta^2}{m^2p_j^2}\|y_j^t - y_j^*\|^2\right] + 2\<z^t - x^*, \eta\EE_j \left[\frac{1}{mp_j}(y_j^t - y_j^*) \right]> \nonumber\\
        &= \|z^t - x^*\|^2 + \frac{\eta^2}{m^2}\sumkm \frac{1}{p_k}\|y_k^t - y_k^*\|^2 + 2\eta\<z^t - x^*, y^t - y^*>. \label{eq:firm_nonexp_for_stoch_prox}
    \end{align}
    Denote $w^*\eqdef x^* - \eta \nabla f(x^*)$. Another optimality condition from Lemma~\ref{lem:optimality} is $x^*=\proxR(w^* - \eta y^*)$, so let us use~\eqref{eq:nonexp} one more time to obtain
    \begin{align*}
        \|z^t - x^*\|^2
        &\le \|w^t - \eta y^t - (w^* - \eta y^*)\|^2 - \|w^t - \eta y^t - z^t - (w^* - \eta y^* - x^*)\|^2 \\
        &= \eta^2\|y^t - y^*\|^2 - 2\eta\<w^t - w^* , y^t - y^*> + \|w^t - w^*\|^2 - \|w^t - \eta y^t - z^t - (w^* - \eta y^* - x^*)\|^2.
    \end{align*}
    Furthermore,
    \begin{align*}
        \|w^t - \eta y^t - z^t - (w^* - \eta y^* - x^*)\|^2
        &= \|w^t - z^t - (w^* - x^*)\|^2\\
        &\quad - 2\eta\<w^t - z^t - (w^* - x^*), y^t - y^*> + \eta^2\|y^t - y^*\|^2,
    \end{align*}
    so
    \begin{align*}
        \|z^t - x^*\|^2 
        &\le - 2\eta\<w^t - w^* , y^t - y^*> + \|w^t - w^*\|^2 + 2\eta\<w^t - z^t - (w^* - x^*), y^t - y^*> \\
        &\quad - \|w^t - z^t - (w^* - x^*)\|^2 \\
        &=  \|w^t - w^*\|^2 - 2\eta\<z^t - x^*, y^t - y^*> - \|w^t - z^t - (w^* - x^*)\|^2.
    \end{align*}
    Together with the previously obtained bounds it adds up to
    \begin{align*}
        \EE_j \|z^t + \eta_j y_j^t - (x^* + \eta_j y_j^*)\|^2
        &\le \|w^t - w^*\|^2 + \frac{\eta^2}{m^2}\sumkm \frac{1}{p_k}\|y_k^t - y_k^*\|^2 - \|z^t - w^t - (x^* - w^*)\|^2.
    \end{align*}
    To get the expression in the left-hand side of this lemma's statement, let us add the missing sum and evaluate its expectation:
    \begin{align*}
        \EE \left[\sumkm \eta_k^2\|y_k^{t+1} - y_k^*\|^2 \right]
        = \EE \|y_j^{t+1} - y_j^*\|^2 + \EE \left[\sum_{k\neq j} \eta_k^2\|y_k^{t+1} - y_k^*\|^2 \right].
    \end{align*}
    Clearly, all summands in the last sum were not changed at iteration $t$, so
    \begin{align*}
        \EE_j \left[\sum_{k\neq j} \eta_k^2\|y_k^{t+1} - y_k^*\|^2 \right]
        &= \EE_j \left[\sum_{k\neq j} \eta_k^2\|y_k^{t} - y_k^*\|^2 \right]\\
        &= \sumkm (1 - p_k)\eta_k^2\|y_k^{t} - y_k^*\|^2 \\ 
        &= \sumkm \eta_k^2\|y_k^{t} - y_k^*\|^2 - \frac{\eta^2}{m^2}\sumkm \frac{1}{p_k}\|y_k^{t} - y_k^*\|^2.
    \end{align*}
    The negative sum will cancel out with the same in equation~\eqref{eq:firm_nonexp_for_stoch_prox} and we conclude the proof.
\end{proof}

\subsection{Convergence of Bregman divergence to 0 almost surely}\label{ap:almost_sure}
Here we formulate a result that we only briefly mentioned in the main text. It states that for convex problems, Bregman divergence $D_f(x^t, x^*)$ almost surely converges to 0. To show it, let us borrow the classical result on supermartingale  convergence.
\begin{proposition}[\cite{bertsekas2015convex}, Proposition~A.4.5]\label{pr:supmart}
	Let $\{X^t\}_t$, $\{Y^t\}_t$, $\{Z^t\}_t$ be three sequences of non-negative random variables and let $\{\cF^t\}_t$ be a sequence of $\sigma$-algebras such that $\cF^t\subset \cF^{t+1}$ for all $t.$ Assume that:
		\begin{itemize}
			\item The random variables $X^t, Y^t, Z^t$ are non-negative and $\cF^t$-measurable.
			\item For each $t$, we have $\EE[X^{t+1}\mid \cF^t] \le X^t - Y^t + Z^t$.
			\item There holds, with probability 1,
			\begin{align*}
				\sum_{t=0}^{\infty} Z^t < \infty.
			\end{align*}
		\end{itemize}
	Then $X^t$ converges to a non-negative random variable $X$ and we have $\sum_{t=0}^{\infty} Y^t < \infty$ with probability 1.
\end{proposition}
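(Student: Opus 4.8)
The plan is to deduce this statement from the classical almost-sure convergence theorem for nonnegative supermartingales (Doob), after packaging the hypothesis into an auxiliary process and localizing it with a stopping time to circumvent the lack of integrability of the tail of $\sum_t Z^t$. Throughout I take the variables to be integrable, which is implicit in the conditional expectations being well-defined.

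First I would form the process
\[
M^t \eqdef X^t - \sum_{s=0}^{t-1}(Z^s - Y^s).
\]
The subtracted sum is $\cF^{t-1}$-measurable, hence $\cF^t$-measurable, because each $Z^s,Y^s$ with $s\le t-1$ is $\cF^s$-measurable and $\cF^s\subseteq\cF^t$. Pulling this $\cF^t$-measurable quantity out of $\EE[\cdot\mid\cF^t]$ and applying the hypothesis $\EE[X^{t+1}\mid\cF^t]\le X^t - Y^t + Z^t$, a one-line telescoping computation yields $\EE[M^{t+1}\mid\cF^t]\le M^t$, so $\{M^t\}$ is a supermartingale. It is however not bounded below, owing to the term $-\sum_{s=0}^{t-1}Z^s$, so it cannot be fed directly into Doob's theorem.

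Second I would localize. Set $S^t\eqdef\sum_{s=0}^{t}Z^s$, which is nondecreasing and $\cF^t$-measurable, and for each integer $a\ge 1$ define the stopping time $\tau_a\eqdef\inf\{t: S^t>a\}$, which is indeed a stopping time since $\{\tau_a\le t\}=\{S^t>a\}\in\cF^t$. For every $t$ one has $\sum_{s=0}^{(t\wedge\tau_a)-1}Z^s\le a$, so, using $X^t\ge 0$ and $Y^s\ge 0$,
\[
M^{t\wedge\tau_a}
= X^{t\wedge\tau_a} - \sum_{s=0}^{(t\wedge\tau_a)-1} Z^s + \sum_{s=0}^{(t\wedge\tau_a)-1} Y^s
\ge -a.
\]
Since a stopped supermartingale is again a supermartingale, $M^{t\wedge\tau_a}+a$ is a nonnegative supermartingale, bounded in $L^1$ by $\EE[M^0]+a=\EE[X^0]+a<\infty$, and therefore converges almost surely to a finite limit.

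Third I would pass to the unstopped process and disentangle. Because $\sum_t Z^t<\infty$ almost surely, every sample point lies in $\{\tau_a=\infty\}=\{S^t\le a\ \forall t\}$ for some integer $a$, so $\bigcup_{a\ge1}\{\tau_a=\infty\}$ has probability $1$; on $\{\tau_a=\infty\}$ we have $t\wedge\tau_a=t$, hence $M^t$ itself converges. Thus $M^t$ converges to a finite limit almost surely. On the probability-one event where both $M^t$ converges and $\sum_s Z^s<\infty$, the quantity $X^t+\sum_{s=0}^{t-1}Y^s=M^t+\sum_{s=0}^{t-1}Z^s$ converges to a finite limit; since $\sum_{s=0}^{t-1}Y^s$ is nondecreasing and (as $X^t\ge0$) bounded above by that limit, we get $\sum_{s=0}^{\infty}Y^s<\infty$, and consequently $X^t$ converges to a finite nonnegative random variable $X$, which is the claim. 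The main obstacle is precisely this localization step: the naive attempt to build a nonnegative supermartingale from the tail $\sum_{s\ge t}Z^s$ fails because that tail is neither integrable nor even $\cF^t$-measurable (the $Z^s$ being only $\cF^s$-measurable), and the stopping time $\tau_a$ together with the monotone rise of $\{\tau_a=\infty\}$ to full measure is exactly what resolves both difficulties.
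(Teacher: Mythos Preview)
Your argument is correct and is essentially the standard Robbins--Siegmund proof: form the compensated process $M^t=X^t-\sum_{s<t}(Z^s-Y^s)$, localize with the stopping times $\tau_a=\inf\{t:\sum_{s\le t}Z^s>a\}$ so that the stopped process is a supermartingale bounded below, invoke Doob, and then let $a\to\infty$ using $\sum_t Z^t<\infty$ a.s.\ to remove the localization and separate the limits of $X^t$ and $\sum_t Y^t$.

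However, the paper does not give its own proof of this proposition at all. It is stated with an explicit citation to \cite{bertsekas2015convex}, Proposition~A.4.5, and is used as a black-box tool in the subsequent almost-sure convergence theorem. So there is no ``paper's proof'' to compare against; your write-up simply supplies what the paper takes as a known result from the literature.
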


\begin{theorem}
	Take a method that satisfies Assumption~\ref{as:method}(a), a stepsize $\eta\le \eta_0$ and an optimum $x^*$ satisfying Assumption~\ref{as:optimality}. Then, with probability 1 it holds $D_f(x^t, x^*)\to 0$.
\end{theorem}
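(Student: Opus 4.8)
The plan is to invoke the supermartingale convergence result (Proposition~\ref{pr:supmart}) with the Lyapunov function $\cL^t = \|x^t - x^*\|^2 + \cM^t + \cY^t$ playing the role of $X^t$, the (scaled) Bregman term $\omega\eta D_f(x^t,x^*)$ playing the role of $Y^t$, and $Z^t = 0$. First I would combine the key lemma (Lemma~\ref{lem:key}) with Assumption~\ref{as:method}(a). Lemma~\ref{lem:key} gives, conditionally on $\cF^t$,
\begin{align*}
\EE\left[\|x^{t+1} - x^*\|^2 + \cY^{t+1}\mid \cF^t\right]
\le \|w^t - w^*\|^2 + \left(1 - \tfrac{\gamma}{m(1+\gamma)}\right)\cY^t - \|z^t - w^t - (x^*-w^*)\|^2,
\end{align*}
and Assumption~\ref{as:method}(a) gives $\EE[\|w^t - w^*\|^2 + \cM^{t+1}\mid \cF^t] \le \|x^t - x^*\|^2 - \omega\eta D_f(x^t,x^*) + \cM^t$. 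Adding $\cM^{t+1}$ to both sides of the first inequality and substituting the second, the $\|w^t - w^*\|^2$ term is absorbed, yielding
\begin{align*}
\EE\left[\cL^{t+1}\mid \cF^t\right]
\le \cL^t - \omega\eta D_f(x^t, x^*) - \|z^t - w^t - (x^* - w^*)\|^2
\le \cL^t - \omega\eta D_f(x^t,x^*),
\end{align*}
where the last step drops the nonnegative squared-norm term and also uses $(1 - \tfrac{\gamma}{m(1+\gamma)})\cY^t \le \cY^t$.

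Next I would check the hypotheses of Proposition~\ref{pr:supmart}: set $X^t = \cL^t$, $Y^t = \omega\eta D_f(x^t,x^*)$, $Z^t = 0$. All three are nonnegative ($\cL^t$ because each summand is, $\cM^t$ being nonnegative by Assumption~\ref{as:method} and $\cY^t$ being a sum of squared norms; $D_f \ge 0$ by convexity) and $\cF^t$-measurable. The recursion $\EE[X^{t+1}\mid\cF^t] \le X^t - Y^t + Z^t$ is exactly what was just derived, and $\sum_t Z^t = 0 < \infty$ trivially. The proposition then delivers that $\cL^t$ converges to a finite random variable and that $\sum_{t=0}^{\infty} \omega\eta D_f(x^t,x^*) < \infty$ with probability one. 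Since $\omega,\eta > 0$ are fixed constants, this forces $D_f(x^t,x^*) \to 0$ almost surely, which is the claim.

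The main obstacle, and the only nontrivial point, is making the filtration/measurability bookkeeping rigorous: $\cF^t$ must be the $\sigma$-algebra generated by all randomness (the index choices $j^0,\dots,j^{t-1}$ and any sampling used inside the gradient oracle such as the SAGA/SVRG indices) up to the start of iteration $t$, so that $x^t$, $\cM^t$, $\cY^t$ are $\cF^t$-measurable while the conditional expectations in Lemma~\ref{lem:key} and Assumption~\ref{as:method}(a) are genuinely conditional on $\cF^t$. One should also note that Assumption~\ref{as:method}(a) as stated is in unconditional expectation form, so strictly one uses its conditional version (which is what the proofs in Appendices~\ref{ap:gd} and~\ref{ap:svrg_saga} actually establish); I would state this conditional form explicitly. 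Everything else is a direct chaining of the two already-proved ingredients into the cited supermartingale lemma, with no further estimates required.
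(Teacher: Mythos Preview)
Your proposal is correct and follows essentially the same approach as the paper: combine Lemma~\ref{lem:key} with Assumption~\ref{as:method}(a) to obtain the one-step supermartingale inequality $\EE[\cL^{t+1}\mid\cF^t]\le \cL^t - \omega\eta D_f(x^t,x^*)$, then apply Proposition~\ref{pr:supmart} with $X^t=\cL^t$, $Y^t=\omega\eta D_f(x^t,x^*)$, $Z^t=0$. The paper handles the measurability point you flag by writing $X^t$ with the $\cF^t$-measurable (non-averaged) quantities $\|y_k^t-y_k^*\|^2$ and a conditional version $\overline\cM^t$ of $\cM^t$, exactly as you suggest should be done.
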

\begin{proof}
	Fix any solution $x^*$, $y_1^*, \dotsc, y_m^*$. Let $\cF^t=\sigma(x^0, y_1^0, \dotsc, y_m^0, \dotsc, x^t, y_1^t, \dotsc, y_m^t)$ be the $\sigma$-algebra generated by all random variables prior to moment $t$, and let $\overline \cM^t$ be $\cM^t$ conditioned on $\cF^t$, i.e., $\overline \cM^t \eqdef \cM^t | \cF^t$, from which it follows $\cM^t = \EE \overline \cM^t$. Then, the assumptions of Proposition~\ref{pr:supmart} are satisfied for sequences 
	\begin{align*}
		X^t 
		&= \|x^t - x^*\|^2 + \overline \cM^t + (1 + \gamma)\sumkm \eta_k^2\|y_k^t - y_k^*\|^2, \\
		Y^t
		&= \omega \eta D_f(x^t, x^*), \\
		Z^t
		&=0.
	\end{align*}
	Therefore, we have that $\sum_{t=0}^{\infty} Y^t < \infty$ and $Y^t \to 0$ almost surely, from which it follows $D_f(x^t, x^*)\to 0$.
\end{proof}
The almost sure guarantee is not applicable to SGD which has $\cM^0$ proportional to the number of iterations. We leave its analysis as well as analysis of convergence of $x^t$ to an optimum for future work.
\subsection{Proof of Theorem~\ref{th:1_over_t_rate} ($\cO\left(\nicefrac{1}{t}\right)$ rate)}\label{ap:1_t_rate}
Below we provide the proof of $\cO\left(\nicefrac{1}{t}\right)$ rate for general convex functions.
\begin{customthm}{\ref{th:1_over_t_rate}}
    Assume $f$ is $L$-smooth and $\mu$-strongly convex, $g_1, \dotsc, g_m, R$ are convex, closed and lower semi-continuous. Take a method satisfying Assumption~\ref{as:method} and $\eta\le \eta_0$, then
    \begin{align*}
        \EE D_f(\overline x^t, x^*)
        \le \frac{1}{\omega \eta t}\cL^0,
    \end{align*}
    where $\cL^0\eqdef \|x^0 - x^*\|^2 + \cM^0 + \sumkm  \eta_k^2 \|y_k^0 - y_k^*\|^2$ and $\overline x^t \eqdef \frac{1}{t}\sum_{k=0}^{t-1} x^k$.
\end{customthm}
\begin{proof}
    Recall that
    \begin{align*}
        \cL^t
        \eqdef  \EE\|x^t - x^*\|^2 + \cM^t + \cY^t,
    \end{align*}
    and by Assumption~\ref{as:method} combined with Lemma~\ref{lem:key}
    \begin{align*}
    		\cL^{t+1}
    		\le \cL^t - \omega \eta \EE D_f(x^t, x^*).
    \end{align*}
	Telescoping this inequality from $0$ to $t-1$, we obtain
    \begin{align*}
        \EE \left[\sum_{k=0}^{t-1} D_f(x^t, x^*)\right]
        \le \frac{1}{\omega\eta}(\cL^0 - \cL^{t})
        \le \frac{1}{\omega\eta}\cL^0.
    \end{align*}    
    By convexity of $f$, the left-hand side is lower bounded by $t\EE D_f(\overline x^t, x^*)$, so dividing both sides by $t$ finishes the proof.
\end{proof}

\subsection{Proof of Theorem~\ref{th:1_t2_rate} ($\cO(\nicefrac{1}{t^2})$ rate)}\label{ap:1_t2_rate}

In this subsection, we show the $\cO\left(\nicefrac{1}{t^2}\right)$ rate.
\begin{customthm}{\ref{th:1_t2_rate}}
	Consider updates with time-varying stepsizes, $\eta^t = \frac{2}{\omega\mu(a + t)}$ and $\eta_j^t = \frac{\eta^t}{m p_j}$ for $j=1,\dotsc, m$, where $a\ge 2\max\left\{\frac{1}{\omega\mu\eta_0}, \frac{1}{\rho} \right\}$. Then, it holds
	\begin{align*}
		\EE \|x^t - x^*\|^2
		\le \frac{a^2}{(t+a-1)^2}\cL^0,
	\end{align*}
	where $\cL^0 = \|x^0 - x^*\|^2 + \cM^0 + \sumkm (\eta_k^0)^2 \|y_k^0 - y_k^*\|^2$.
\end{customthm}
\begin{proof}
	For this proof, we redefine the sequence $\cY^t$ to have time-varying stepsizes:
	\begin{align*}
		\cY^t
		\eqdef \sumkm (\eta_k^{t})^2\EE\|y_k^{t} - y_k^*\|^2.
	\end{align*}
	Before writing a new recurrence, let us note that 
	\begin{align*}
		(1 - \omega\eta^t\mu)\left(\frac{\eta^{t-1}}{\eta^{t}}\right)^2
		= \frac{\left(1 - \frac{2}{a + t}\right)(a+t)^2}{(a+t-1)^2}
		= \frac{(a+t-2)(a+t)}{(a+t-1)^2}
		< 1,
	\end{align*}
	so $1 - \omega\eta^t\mu \le \left(\frac{\eta^t}{\eta^{t-1}}\right)^2$. Then, Lemma~\ref{lem:key} gives a similar recurrence to what we have seen in other proofs, but the stepsizes in the right-hand side are now time-dependent:
	\begin{align*}
		\cL^{t+1}
		&=\EE \|x^{t+1} - x^*\|^2 + \cM^{t+1} + \cY^{t+1}\\
		&\le (1 - \omega\eta^t \mu) \EE\|x^t - x^*\|^2 + (1 - \rho)\cM^t + \sumkm (\eta_k^{t})^2\EE\|y_k^{t} - y_k^*\|^2 \\
		&\le (1 - \omega\eta^t\mu)\EE\left[\|x^t - x^*\|^2 + \cM^t\right] + \left(\frac{\eta^t}{\eta^{t-1}}\right)\sumkm(\eta_k^{t-1})^2\EE\|y_k^t - y_k^*\|^2 \\
		&\le \left(\frac{\eta^t}{\eta^{t-1}}\right)^2\EE\left[\|x^t - x^*\|^2 + \cM^t\right] + \left(\frac{\eta^t}{\eta^{t-1}}\right)^2\sumkm(\eta_k^{t-1})^2\EE\|y_k^t - y_k^*\|^2  \\
		&= \left(\frac{\eta^t}{\eta^{t-1}}\right)^2\cL^t.
	\end{align*}
	Recursing this inequality yields
	\begin{align*}
		\cL^{t+1} 
		\le \cL^0\prod_{k=1}^t\left(\frac{\eta^k}{\eta^{k-1}}\right)^2
		= \left(\frac{\eta^t}{\eta^{0}}\right)^2\cL^0
		= \left(\frac{a}{a+t}\right)^2\cL^0.
	\end{align*}
\end{proof}
\subsection{Proof of Theorem~\ref{th:sgd_str_cvx} ($\cO(\nicefrac{1}{t})$ rate of SGD)}\label{ap:sgd_str_cvx}
Here we consider the case where $f(x)$ is given as expectation parameterized by a random variable~$\xi$,
\begin{align*}
	f(x)
	= \EE_\xi f_\xi(x).
\end{align*}
While it is often assumed in the literature that $\EE \|\nabla f_\xi(x) - \nabla f(x)\|^2\le \sigma^2$ uniformly over $x$, we do not need this assumption and bound the variance using the following lemma.
\begin{lemma}\label{lem:sgd_variance}
	Let $w^t = x^t - \eta \nabla f_{\xi^t}(x^t)$, where random function $f_\xi(x)$ is almost surely convex and $L$-smooth. Then,
	\begin{align}
		\EE \|\nabla f_{\xi^t}(x^t) - \nabla f(x^*)\|^2
		\le 4L \EE D_f(x^t, x^*) + 2\sigma_*^2,\label{eq:sgd_variance}
	\end{align}
	where $\sigma_*^2\eqdef \EE \|\nabla f_{\xi}(x^*) - \nabla f(x^*)\|^2$, i.e., $\sigma_*^2$ is the variance at an optimum. If more than  one $x^*$ exists, take the one that minimizes $\sigma_*^2$.
\end{lemma}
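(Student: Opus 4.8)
The plan is the standard ``noise transfer'' argument: bound the second moment of the stochastic gradient by a Bregman divergence term (which will be absorbed into the descent terms of the main recursion) plus the irreducible variance at the optimum. First I would use Young's inequality in the form $\|a+b\|^2 \le 2\|a\|^2 + 2\|b\|^2$ with $a = \nabla f_{\xi^t}(x^t) - \nabla f_{\xi^t}(x^*)$ and $b = \nabla f_{\xi^t}(x^*) - \nabla f(x^*)$, to get
\[
	\|\nabla f_{\xi^t}(x^t) - \nabla f(x^*)\|^2 \le 2\|\nabla f_{\xi^t}(x^t) - \nabla f_{\xi^t}(x^*)\|^2 + 2\|\nabla f_{\xi^t}(x^*) - \nabla f(x^*)\|^2 .
\]
The crude split (rather than expanding the square and keeping the cross term) is what produces the constants $4L$ and $2\sigma_*^2$; the cross term does not vanish, since $\nabla f_{\xi^t}(x^t)-\nabla f_{\xi^t}(x^*)$ is in general correlated with the noise $\nabla f_{\xi^t}(x^*)-\nabla f(x^*)$. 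Taking expectation, the second term is exactly $2\sigma_*^2$ by the definition $\sigma_*^2 \eqdef \EE\|\nabla f_{\xi}(x^*) - \nabla f(x^*)\|^2$.

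For the first term I would apply inequality~\eqref{eq:grad_dif_bregman} to the function $f_{\xi^t}$, which is almost surely convex and $L$-smooth, at the points $x^t$ and $x^*$:
\[
	\|\nabla f_{\xi^t}(x^t) - \nabla f_{\xi^t}(x^*)\|^2 \le 2L\, D_{f_{\xi^t}}(x^t, x^*).
\]
Since $x^t$ is a deterministic function of $\xi^0,\dots,\xi^{t-1}$ and hence independent of $\xi^t$, I first take expectation over $\xi^t$ conditionally on $x^t$; by linearity of the Bregman divergence in its generating function and $f = \EE_\xi f_\xi$ (so that also $\nabla f = \EE_\xi \nabla f_\xi$), this gives $\EE_{\xi^t}\!\left[ D_{f_{\xi^t}}(x^t, x^*) \,\middle|\, x^t\right] = D_f(x^t, x^*)$. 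Taking the remaining (outer) expectation over $x^t$ then yields $\EE\|\nabla f_{\xi^t}(x^t) - \nabla f_{\xi^t}(x^*)\|^2 \le 2L\,\EE D_f(x^t, x^*)$. Combining with the bound on the noise term gives exactly~\eqref{eq:sgd_variance}.

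There is no genuine obstacle in this lemma; the only points needing a little care are (i) the measurability/independence of $x^t$ from $\xi^t$, so that the conditional expectation of the per-sample divergence collapses to $D_f$, (ii) the identity $\nabla f(x^*) = \EE_\xi \nabla f_\xi(x^*)$, which makes $b$ a genuine zero-mean noise term and ties its second moment to $\sigma_*^2$, and (iii) the convention, when $\cX^*$ is not a singleton, of choosing the optimum $x^*$ minimizing $\sigma_*^2$ — none of which affects the chain of inequalities above.
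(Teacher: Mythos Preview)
Your proposal is correct and follows essentially the same approach as the paper: split via Young's inequality into the gradient-difference term and the optimum-noise term, bound the former by $2L\,D_{f_{\xi^t}}(x^t,x^*)$ using \eqref{eq:grad_dif_bregman}, and collapse the per-sample divergence to $D_f$ in expectation. If anything, your write-up is more careful than the paper's about the independence of $x^t$ from $\xi^t$ that justifies the final equality.
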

\begin{proof}
	This proof is based on existing results for SGD and goes in a very standard way. By Young's inequality
	\begin{align*}
		\EE \|\nabla f_{\xi^t}(x^t) - \nabla f(x^*)\|^2
		&\le 2 \EE \|\nabla f_{\xi^t}(x^t) - \nabla f(x^*; \xi^t)\|^2 + 2 \EE \|\nabla f_{\xi^t}(x^*)  - \nabla f(x^*)\|^2 \\
		&\overset{\eqref{eq:grad_dif_bregman}}{\le} 4L \EE D_{f_{\xi^t} }(x^t, x^*) + 2\sigma_*^2 \\
		&= 4L \EE D_{f}(x^t, x^*) + 2\sigma_*^2.
	\end{align*}
\end{proof}
In the proof of Theorem~\ref{th:sgd_str_cvx} we will again need time-varying stepsize and $\cY^t$ should be defined as
\begin{align*}
		\cY^t
		\eqdef \sumkm (\eta_k^{t})^2\EE\|y_k^{t} - y_k^*\|^2.
	\end{align*}
But before let us prove a simple statement about sequences with contraction and additive error.
\begin{lemma}
\label{lem:recurrence_with_sigma}
	Assume that sequence $\{\cL^t\}_t$ satisfies inequality $\cL^{t+1}\le \left(\frac{\eta^t}{\eta^{t-1}}\right)^2 \cL^t + 2(\eta^t)^2 \sigma_*^2$ with some constant $\sigma_*\ge 0$. Then, it holds
	\begin{align*}
		\cL^{t} 
		\le \left(\frac{\eta^{t-1}}{\eta^0}\right)^2 \cL^0 + 2t (\eta^{t-1})^2 \sigma_*^2.
	\end{align*}
\end{lemma}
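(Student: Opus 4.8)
The plan is to treat this as a completely routine unrolling of a one-step recursion that couples a multiplicative contraction with an additive error; the only trick needed is to renormalize the sequence by the (squared) stepsize so that the multiplicative factor disappears and the recursion becomes purely additive.

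Concretely, I would first divide both sides of the assumed inequality $\cL^{t+1}\le (\eta^t/\eta^{t-1})^2\cL^t + 2(\eta^t)^2\sigma_*^2$ by $(\eta^t)^2$ and introduce the normalized quantity $b^t \eqdef \cL^t/(\eta^{t-1})^2$. The recursion then collapses to the trivial form $b^{t+1}\le b^t + 2\sigma_*^2$. Summing this over $s=0,\dots,t-1$ gives $b^t\le b^0 + 2t\sigma_*^2$, and multiplying back through by $(\eta^{t-1})^2$ yields
\begin{align*}
\cL^t \le \left(\frac{\eta^{t-1}}{\eta^{-1}}\right)^2\cL^0 + 2t(\eta^{t-1})^2\sigma_*^2 .
\end{align*}
Since the stepsize sequence is non-increasing in every setting where the lemma is invoked (for instance $\eta^{t-1}=2/(a+\mu t)$ in Theorem~\ref{th:sgd_str_cvx}), we have $\eta^{-1}\ge \eta^0$, hence $(\eta^{t-1}/\eta^{-1})^2\le (\eta^{t-1}/\eta^0)^2$, and the displayed estimate implies the claimed bound. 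Equivalently, one can bypass $\eta^{-1}$ altogether: the very first step of the recursion already gives $\cL^1\le \cL^0 + 2(\eta^0)^2\sigma_*^2$ (again using monotonicity), and then the same normalization-and-telescoping argument run from $t=1$ produces exactly $\cL^t\le (\eta^{t-1}/\eta^0)^2\cL^0 + 2t(\eta^{t-1})^2\sigma_*^2$.

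The computation carries no genuine difficulty; the single point that deserves a moment of attention is the bookkeeping at the left endpoint of the recursion — whether the factor multiplying $\cL^0$ should carry $\eta^{-1}$ or $\eta^0$ in the denominator — which is resolved, as above, purely by monotonicity of the stepsizes. I therefore expect no real obstacle in writing out the full argument.
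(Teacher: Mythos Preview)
Your proposal is correct and essentially equivalent to the paper's proof: the paper argues by direct induction on $t$, and your normalization $b^t\eqdef \cL^t/(\eta^{t-1})^2$ simply repackages that same induction as a one-line telescoping sum. Your explicit remark about the endpoint (needing $\eta^{-1}\ge\eta^0$, i.e.\ monotonicity of the stepsizes) is a point the paper's proof handles only implicitly when it calls the base case ``straightforward.''
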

\begin{proof}
	We will prove the inequality by induction. For $t=0$ it is straightforward. The induction step follows from
	\begin{align*}
		\cL^{t+1}
		&\le \left(\frac{\eta^t}{\eta^{t-1}}\right)^2 \cL^t + 2(\eta^t)^2 \sigma_*^2 \\
		&\le \left(\frac{\eta^t}{\eta^{t-1}}\right)^2\left(\frac{\eta^{t-1}}{\eta^0}\right)^2 \cL^0 + 2\left(\frac{\eta^t}{\eta^{t-1}}\right)^2(\eta^{t-1})^2t\sigma_*^2+ 2t (\eta^{t-1})^2 \sigma_*^2 \\
		&= \left(\frac{\eta^{t}}{\eta^0}\right)^2 \cL^0 + 2(t+1) (\eta^{t-1})^2 \sigma_*^2.
	\end{align*}
\end{proof}
Now we are ready to prove the theorem.
\begin{customthm}{\ref{th:sgd_str_cvx}}
	Assume $f$ is $\mu$-strongly convex, $f(\cdot; \xi)$ is almost surely convex and $L$-smooth. Let the update be produced by SGD, i.e., $v^t = \nabla f(x^t; \xi^t)$, and let us use time-varying stepsizes $\eta^{t-1} = \frac{2}{a + \mu t}$ with $a\ge 4L$. Then, it holds
	\begin{align*}
		\EE \|x^t - x^*\|^2
		\le \frac{8\sigma_*^2}{\mu(a + \mu t)} + \frac{a^2}{(a + \mu t)^2}\cL^0.
	\end{align*}
\end{customthm}
\begin{proof}
	It holds by Lemma~\ref{lem:sgd_variance}
	\begin{align*}
		\EE \|\nabla f_{\xi^t}(x^t)  - \nabla f(x^*)\|^2
		\le 4L \EE D_f(x^t, x^*) + 2\sigma_*^2.
	\end{align*}
	Therefore, for $w^t \eqdef x^t - \eta^t v^t = x^t - \eta^t \nabla f_{\xi^t}(x^t) $ and $w^*\eqdef x^* - \eta^t \nabla f(x^*)$ we have
	\begin{align*}
		\EE \|w^t - w^*\|^2
		&= \EE \left[\|x^t - x^*\|^2 - 2\eta^t\<\nabla f(x^t) - \nabla f(x^*), x^t - x^*> + (\eta^t)^2\|\nabla f_{\xi^t}(x^t) - \nabla f(x^*)\|^2 \right] \\
		&\overset{\eqref{eq:sgd_variance}}{\le} \EE \left[\|x^t - x^*\|^2 - 2\eta^t\<\nabla f(x^t) - \nabla f(x^*), x^t - x^*> + 4(\eta^t)^2LD_f(x^t, x^*) + 2(\eta^t)^2\sigma_*^2 \right]\\
		&\overset{\eqref{eq:scal_prod_cvx}}{\le} \EE\Bigl[(1 - \eta^t\mu)\|x^t - x^*\|^2 - 2\eta^t(\underbrace{1 - 2\eta^t L}_{\ge 0})D_f(x^t, x^*) + 2(\eta^t)^2\sigma_*^2 \Bigr] \\
		&\le (1 - \eta^t\mu)\EE\|x^t - x^*\|^2 + 2(\eta^t)^2\sigma_*^2.
	\end{align*}
	Using the same argument as in the proof of Theorem~\ref{th:1_t2_rate}, we can show that $1 - \eta^t\mu\le \left(\frac{\eta^t}{\eta^{t-1}}\right)^2$. Combining these results with Lemma~\ref{lem:key}, we obtain for $\cL^{t+1}\eqdef \EE\|x^{t+1} - x^*\|^2 + \cY^{t+1}$
	\begin{align*}
		\cL^{t+1}
		&\le (1 - \eta^t\mu)\EE\|x^t - x^*\|^2 + \sumkm (\eta_k^{t})^2\EE\|y_k^{t} - y_k^*\|^2 + 2(\eta^t)^2\sigma_*^2 \\
		&\le \left(\frac{\eta^t}{\eta^{t-1}}\right)^2\EE\|x^t - x^*\|^2 + \left(\frac{\eta^t}{\eta^{t-1}}\right)^2\cY^t + 2(\eta^t)^2\sigma_*^2 \\
		&= \left(\frac{\eta^t}{\eta^{t-1}}\right)^2\cL^t + 2(\eta^t)^2\sigma_*^2.
	\end{align*}
	By Lemma~\ref{lem:recurrence_with_sigma}
	\begin{align*}
		\EE\|x^t - x^*\|^2
		\le \cL^t
		\le \left(\frac{\eta^{t-1}}{\eta^0}\right)^2 \cL^0 + 2t (\eta^{t-1})^2 \sigma_*^2
		\le \frac{a^2}{(a + \mu t)^2}\cL^0 + \frac{8t}{(a + \mu t)\mu t} \sigma_*^2.
	\end{align*}
\end{proof}

\subsection{Proof of Theorem~\ref{th:lin_conv_lin_model} (linear rate for $g_j=\phi_j(\mA_j^\top x)$)}\label{ap:lin_conv_lin_model}

Let us now show linear convergence of our method when the consider problem has linear structure, i.e., $g_j(x) = \phi_j(\mA_j^\top x)$.

We first need a lemma on the nature of $y_1^t,\dotsc, y_m^t$ in the considered case.

\begin{lemma}\label{lem:linear}
    Let the proximal sum be of the form $\frac{1}{m}\sumjm \phi_j(\mA_j^\top x)$ with some matrices $\mA_j\in \RR^{d\times d_j}$, and $y_j^0 = \mA_j\beta_j^0$ for $j=1,\dotsc, m$. Then, if Assumption~\ref{as:optimality2} is satisfied, for any $t$ and $j$ we have
    \begin{align*}
        y_j^t = \mA_j \beta_j^t, \quad  y^t = \frac{1}{m} \sumjm y_j^t= \frac{1}{m}\mA \beta^t, \quad    y_j^* =  \mA_j\beta_j^*, \quad  y^* = \frac{1}{m}\sumjm y_j^*= \frac{1}{m}\mA \beta^*
    \end{align*}
    with some vectors $\beta_i^t, \beta_i^*\in\RR^{d_i}$ with $i=1,\dotsc, m$, $\beta^t\eqdef ((\beta_1^t)^\top, \dotsc, (\beta_m^t)^\top)^\top$, $\beta^* \eqdef ((\beta_1^*)^\top, \dotsc, (\beta_m^*)^\top)^\top$ and $\mA\eqdef [\mA_1, \dotsc, \mA_m]$.
\end{lemma}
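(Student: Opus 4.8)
The plan is to argue by induction on $t$, treating the claims about $y_j^*$ and $y^*$ separately since they carry no $t$-dependence. For the optimal dual variables, Assumption~\ref{as:optimality2} directly postulates $y_j^*\in\mA_j\partial\phi_j(\mA_j^\top x^*)$, so there is $\beta_j^*\in\partial\phi_j(\mA_j^\top x^*)\subseteq\RR^{d_j}$ with $y_j^* = \mA_j\beta_j^*$, and then $y^* = \frac{1}{m}\sumjm y_j^* = \frac{1}{m}\sumjm\mA_j\beta_j^* = \frac{1}{m}\mA\beta^*$ using the block partition $\mA = [\mA_1,\dots,\mA_m]$ and $\beta^* = ((\beta_1^*)^\top,\dots,(\beta_m^*)^\top)^\top$. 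This part does not interact with the iterates at all.

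For $y_j^t$, the base case $t=0$ is exactly the hypothesis $y_j^0 = \mA_j\beta_j^0$, from which $y^0 = \frac{1}{m}\sumjm\mA_j\beta_j^0 = \frac{1}{m}\mA\beta^0$. For the inductive step, suppose $y_k^t = \mA_k\beta_k^t$ for every $k$, and let $j$ be the index sampled at iteration $t$. For $k\neq j$ the method leaves $y_k$ unchanged, so $\beta_k^{t+1}\eqdef\beta_k^t$ works. For the sampled coordinate I would first put the update in closed form: writing $u\eqdef z^t + \eta_j y_j^t$, so that $x^{t+1} = \prox_{\eta_j g_j}(u)$, one has
\[
    y_j^{t+1} = y_j^t + \frac{1}{\eta_j}\bigl(z^t - x^{t+1}\bigr) = \frac{1}{\eta_j}\bigl(u - \prox_{\eta_j g_j}(u)\bigr).
\]
Since $g_j(x) = \phi_j(\mA_j^\top x)$, Lemma~\ref{lem:prox_of_composition} gives $u - \prox_{\eta_j g_j}(u)\in\Range{\mA_j}$, hence $y_j^{t+1} = \mA_j\beta_j^{t+1}$ for some $\beta_j^{t+1}\in\RR^{d_j}$ (which, like the other $\beta_k^t$, may depend on the realized sampling path; this is harmless, as the lemma only asserts existence). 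Summing over $k$ then yields $y^{t+1} = \frac{1}{m}\sumkm\mA_k\beta_k^{t+1} = \frac{1}{m}\mA\beta^{t+1}$, closing the induction.

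There is no genuinely hard step here: the lemma is a structural bookkeeping fact whose only content is the identity $y_j^{t+1} = \eta_j^{-1}(u - \prox_{\eta_j g_j}(u))$ combined with the range-preservation property of the proximal operator of a composition. The single point needing a line of care is checking the hypothesis of Lemma~\ref{lem:prox_of_composition}, namely that $\Range{\mA_j^\top}$ meets $\dom\phi_j$; this is guaranteed by Assumption~\ref{as:optimality2}, since $\mA_j^\top x^*\in\dom\phi_j$ certainly lies in $\Range{\mA_j^\top}$, and it holds for every $j$ uniformly in $t$.
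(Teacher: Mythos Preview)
Your proposal is correct and follows essentially the same approach as the paper: both rewrite $y_j^{t+1}=\eta_j^{-1}\bigl(z^t+\eta_j y_j^t-\prox_{\eta_j g_j}(z^t+\eta_j y_j^t)\bigr)$ and invoke Lemma~\ref{lem:prox_of_composition} to place this difference in $\Range{\mA_j}$, while the claims for $y_j^*,y^*$ come straight from Assumption~\ref{as:optimality2}. Your version is in fact a bit more careful than the paper's terse argument, since you explicitly handle the base case $t=0$, the unchanged coordinates $k\neq j$, and verify the domain hypothesis of Lemma~\ref{lem:prox_of_composition}.
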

\begin{proof}
    By definition $y_j^{t+1} = y_j^t + \frac{1}{\eta_j}(z^t - x^{t+1}) = \frac{1}{\eta_j}(z^t + \eta_j y_j^t - x^{t+1})$. In addition, by Lemma~\ref{lem:prox_of_composition} there exists $\beta_j^{t+1} \in \partial \phi_j (\mA_j^\top x^{t+1})$ such that $x^{t+1} = \proxj(z^t + \eta_j y_j^t) \in z^t + \eta_j y_j^t - \eta_j\mA_j \beta_j^{t+1}$
     and, thus, $y_j^{t+1} = \mA_j\beta_j^{t+1} $. Therefore, we also have $y^t = \frac{1}{m}\mA\beta^t$.
    
    The claims about $y_1^*,\dotsc, y_m^*$  and $y^*$ follow from Assumption~\ref{as:optimality2}.
\end{proof}

Now it is time to prove Theorem~\ref{th:lin_conv_lin_model}.

\begin{customthm}{\ref{th:lin_conv_lin_model}}
    Assume that $f$ is $\mu$-strongly convex, $R\equiv 0$, $g_j(x) = \phi_j(\mA_j^\top x)$ for $j=1,\dotsc, m$ and take a method satisfying Assumption~\ref{as:method} with $\rho>0$. Then, if $\eta\le \eta_0$,
    \begin{align*}
        \EE \|x^t - x^*\|^2
        \le \left(1 - \min\{\rho, \omega\eta\mu, \rho_{A}\} \right)^t \cL^0,
    \end{align*}
    where $\rho_{A} \eqdef \lambda_{\min}(\mA^\top\mA) \min_j \left(\frac{p_j}{\|\mA_j\|}\right)^2$, and $\cL^0\eqdef \|x^0 - x^*\|^2 + \cM^0 + \sumkm  \eta_k^2 \|y_k^0 - y_k^*\|^2$.
\end{customthm}

\begin{proof}
    Lemma~\ref{lem:key} with Assumption~\ref{as:method} yields
    \begin{align*}
        \cL^{t+1}
        &\le (1 - \min\{\rho, \omega\eta\mu\}) \left(\EE \|x^t - x^*\|^2 + \cM^t\right)  + \cY^t - \EE \|z^t - w^t - (x^* - w^*)\|^2.
    \end{align*}
    By Lemma~\ref{lem:linear}
    \begin{align*}
        \cY^t 
        = \sumkm  \eta_k^2 \EE\|y_k^t - y_k^*\|^2
        = \sumkm \eta_k^2 \EE\|\mA_k(\beta_k^t - \beta_k^*)\|^2.
    \end{align*}
    Since we assume $R\equiv \mathrm{const}$, we have $z^t - w^t = x^t - \eta v^t - \eta y^t - (x^t - \eta v^t) = -\eta y^t$ and
    \begin{align*}
        \|z^t - w^t - (x^* - w^*)\|^2
        &= \eta^2 \|y^t - y^*\|^2\\
        &= \frac{\eta^2}{m^2} \|\mA (\beta^t - \beta^*)\|^2\\
        &\ge \lambda_{\min} (\mA^\top\mA) \frac{\eta^2}{m^2}\|\beta^t - \beta^*\|^2\\
        &= \lambda_{\min} (\mA^\top\mA) \sumkm \frac{p_k^2}{\|\mA_k\|^2}\eta_k^2\|\mA_k\|^2\|\beta_k^t - \beta_k^*\|^2\\
        &\ge \lambda_{\min} (\mA^\top\mA) \min_k \frac{p_k^2}{\|\mA_k\|^2} \sumkm \eta_k^2\|\mA_k\|^2\|\beta_k^t - \beta_k^*\|^2\\
        &\ge \lambda_{\min} (\mA^\top\mA) \min_k \frac{p_k^2}{\|\mA_k\|^2} \sumkm \eta_k^2\|\mA_k(\beta_k^t - \beta_k^*)\|^2\\
        &=\rho_A \sumkm  \eta_k^2 \|y_k^t - y_k^*\|^2.
    \end{align*}
    Therefore,
    \begin{align*}
        \cY^{t} - \EE \|z^t - w^t - (x^* - w^*)\|^2
        &\le (1 - \rho_A)\cY^t.
    \end{align*}
    Putting the pieces together, we obtain
    \begin{align*}
        \cL^{t+1}
        \le (1 - \min\{\rho, \omega\eta\mu, \rho_A\})\cL^t,
    \end{align*}
    from which it follows that $\cL^t$ converges to 0 linearly. Finally, note that $\EE \|x^t - x^*\|^2\le \cL^t\le (1 - \min\{\rho, \omega\eta\mu, \rho_A\})^t\cL^0$.
\end{proof}

\subsection{Proof of Theorem~\ref{th:lin_constr} (linear constraints)}\label{ap:lin_constr}
Here we discuss the problem of linearly constrained minimization
\begin{align*}
	\min_x \{f(x): \mA^\top x = b \}.
\end{align*}
We split matrix $\mA =  [\mA_1, \dots, \mA_m]$ and vector $b=(b_1^\top, \dotsc, b_m^\top)^\top$ and define projection operator $\Pi_j(\cdot)\eqdef \Pi_{\{x:\mA_j^\top x= b_j\}}(\cdot)$ . Since $y_j^t\in \Range{\mA_j}$, it is orthogonal to the hyperplane $\{x: \mA_j^\top x = b_j\}$ for any $x$ it holds
\begin{align*}
	\Pi_j(x + y_j^t) 
	=\Pi_j(x).
\end{align*}
This allows us to write a memory-efficient version of Algorithm~\ref{alg:sdm} as given in Algorithm~\ref{alg:sdm_lin_system}. If only a subset of functions $g_1, \dotsc, g_m$ is linear equality constraints, then similarly the corresponding vectors $y_j^t$ are not needed in the update, although they are still useful for the analysis.
\begin{algorithm}[t]
   \caption{Stochastic Decoupling Method for linearly constrained problem.}
   \label{alg:sdm_lin_system}
\begin{algorithmic}[1]
   \Require Stepsize $\eta$, initial vectors $x^0, y^0\in \RR^d$, probabilities $p_1,\dotsc, p_m$, oracle that gives gradient estimates
   \For{$t=0,1,\dotsc$}
	   \State Produce an estimate $v^t$ of $\nabla f(x^t)$
	   \State $z^{t} = \proxR(x^t - \eta v^t - \eta y^t)$
	   \State Sample $j$ from $\{1,\dotsc, m\}$ with probabilities $\{p_1, \dotsc, p_m\}$
	   \State $x^{t+1} = \Pi_{j}(z^t)$
	   \State $y^{t+1} = y^t + \frac{p_j}{\eta}(z^t - x^{t+1})$
   \EndFor
\end{algorithmic}
\end{algorithm}

Here we show that if $f$ is strongly convex and the non-smooth part is constructed of linear constraints, then we can guarantee linear rate of convergence. Moreover, the rate will depend only on the smallest nonzero eigenvalue of $\mA^\top \mA$, implying that even if $\mA^\top \mA$ is degenerate, convergence will be linear.
\begin{customthm}{\ref{th:lin_constr}}
	Under the same assumptions as in Theorem~\ref{th:lin_conv_lin_model} and assuming, in addition, that $g_j(x) = \ind_{\{x\;:\;\mA_j^\top x = b_j\}}$ it holds $\EE \|x^t - x^*\|^2 \le (1 - \min\{\rho, \omega\eta\mu, \rho_A\})^t\cL^0$ with $\rho_A=\lambda_{\min}^+(\mA^\top\mA)\min_{j}\left(\frac{p_j}{\|\mA_j\|}\right)^2$, i.e., $\rho_A$ depends only on  the smallest \textbf{positive} eigenvalue of $\mA^\top\mA$.
\end{customthm}
\begin{proof}
	The main reason we get an improved guarantee for linear constraints is that one can write a closed form expression for the proximal operators:
	\begin{align*}
		\proxj(x)
		= x - \mA_j(\mA_j^\top \mA_j)^\dagger (\mA_j^\top x - b_j).
	\end{align*}
	Assume that $j$ was sampled at iteration $t$, then
	\begin{align*}
		y_j^{t+1}
		&= \frac{1}{\eta_j}\left(z^t + \eta_j y_j^t - \proxj(z^t + \eta_j y_j^t) \right)\\
		&= \mA_j(\mA_j^\top \mA_j)^\dagger (\mA_j^\top (z^t + \eta_i y_j^t) - b_j).
	\end{align*}
	Therefore, for any $j$ and $t$ there exists a vector $x_j^t\in\RR^d$ such that 
	\begin{align*}
		y_j^{t} 
		&= \mA_j(\mA_j^\top \mA_j)^\dagger (\mA_j^\top x_j^t - b_j) \\
		&= \mA_j(\mA_j^\top \mA_j)^\dagger \mA_j^\top (x_j^t - x^*),
	\end{align*} 
	where the second step is by the fact that $x^*$ is from the set $\{x:\mA_j^\top x = b_j\}$.
	One can show using SVD that $\Range{(\mA_j^\top \mA_j)^\dagger\mA_j^\top}=\Range{\mA_j^\top}$. Then, $y_j^t - y_j^* = \frac{1}{m}\mA_j(\beta_j^t - \beta_j^*)$ with $\beta_j^t - \beta_j^*\in \Range{\mA_j^\top}$. This, in  turn, implies $\beta^t - \beta^* \in\Range{\mA^\top}$, so
    \begin{align*}
        \|z^t - w^t - (x^* - w^*)\|^2
        &= \eta^2 \|y^t - y^*\|^2\\
        &= \frac{\eta^2}{m^2} \|\mA (\beta^t - \beta^*)\|^2\\
        &\ge \lambda_{\min}^+ (\mA^\top\mA) \frac{\eta^2}{m^2}\|\beta^t - \beta^*\|^2.
    \end{align*}
    The rest of the proof goes the same way as that of Theorem~\ref{th:lin_conv_lin_model} in Appendix~\ref{ap:lin_conv_lin_model}.
\end{proof}

\subsection{Proof of Theorem~\ref{th:lin_conv_smooth} (smooth $g_j$)}\label{ap:lin_conv_smooth}
This is the only proof where Lemma~\ref{lem:key} is used with finite smoothness constants, i.e., $\max_{j=1,\dotsc, m}L_j < +\infty$. On the other hand, we will not use the negative square term from Lemma~\ref{lem:key}, which is rather needed in the case $g_j(x) = \phi_j(\mA_j^\top x)$.
\begin{customthm}{\ref{th:lin_conv_smooth}}
    Assume that $f$ is $L$-smooth and $\mu$-strongly convex, $g_j$ is $L_j$-smooth for $j=1,\dotsc, m$ and Assumption~\ref{as:method}(b) is satisfied. Then, Algorithm~\ref{alg:sdm} converges as
	\begin{align*}
		\EE \|x^t - x^*\|^2
		\le \left(1 - \min\left\{\omega\eta\mu, \rho, \frac{\gamma}{m(1+\gamma)}\right\}\right)^t \cL^0,
	\end{align*}	    
    where $\gamma \eqdef \min_{j=1,\dotsc,m} \frac{1}{\eta_j L_j}$.
\end{customthm}
\begin{proof}
	Following the same lines as in the proof of Theorem~\ref{th:lin_conv_lin_model}, we get a contraction in $\cY^t$. Now we obtain it from the fact that functions $g_1,\dotsc, g_m$ are smooth, so the recursion is
	\begin{align*}
		\cL^{t+1}
		&\le (1 - \omega\eta\mu)\EE\|x^t-x^*\|^2 +  (1 - \rho)\cM^t + \left(1 - \frac{\gamma}{m(1+\gamma)}\right)\cY^t \\
		&\le \left(1 - \min\left\{\omega\eta\mu, \rho, \frac{\gamma}{m(1+\gamma)}\right\}\right)\cL^t.
	\end{align*}
	This is sufficient to show the claimed result.
\end{proof}

\subsection{Proof of Corollary~\ref{cor:acc_in_g} (optimal stepsize)}\label{ap:acc_in_g}
Corollary~\ref{cor:acc_in_g} is a statement about the optimal stepsizes for the case where $g_1, \dotsc, g_m$ are smooth functions. Its proof is a mere check that the choice of stepsizes gives the claimed complexity. 
\begin{customcor}{\ref{cor:acc_in_g}}
	Choose as solver for $f$ SVRG or SAGA without minibatching, which satisfy Assumption~\ref{as:method} with $\eta_0=\frac{1}{5L}$ and $\rho = \frac{1}{3n}$, and consider for simplicity situation where $L_1= \dotsb = L_m \eqdef L_g$ and $p_1=\dotsb=p_m$. Define $\eta_{best} \eqdef \frac{1}{\sqrt{\omega \mu m L_g}}$,
	and set the stepsize to be $\eta=\min\{\eta_0, \eta_{best}\}$. Then the complexity to get $\EE\|x^t - x^*\|^2\le \varepsilon$ is
$
		\cO\left(\left(n + m + \frac{L}{\mu} + \sqrt{m\frac{L_g}{\mu}}\right) \log\frac{1}{\varepsilon}\right).
$
\end{customcor}
\begin{proof}
	According to Theorem~\ref{th:lin_conv_smooth}, in general, for any $\eta\le \eta_0$ the complexity to get $\EE\|x^t - x^*\|^2\le \varepsilon$ is 
	\begin{align*}
		\cO\left(\left(\frac{1}{\rho} + m + \frac{1}{\omega\eta\mu} + \frac{1}{\gamma}m\right) \log\frac{1}{\varepsilon}\right),
	\end{align*}
	where $\frac{1}{\gamma}$ simplifies to $\eta L_g$ when $L_1=\dotsb=L_m=L_g$ and $p_1=\dotsb=p_m=\frac{1}{m}$. In addition, for SVRG and SAGA, $\omega$ is a constant close to 1, so we can ignore it. Since $m$ and $\frac{1}{\rho}=3n$ do not depend on $\eta$, we only need to simplify the other two terms. One of them decreases with $\eta$ and the other increases, so the best complexity is achieved when the two quantities are equal to each other. The corresponding equation is
		$\omega \eta^2 \mu m L_g = 1$,
	whose solution is
	\begin{align*}
		\eta
		=\eta_{best}
		= \frac{1}{\sqrt{\omega \mu m L_g}}.
	\end{align*}
	Thus, we see that $\eta_{best}$ is optimal. Moreover, if $\eta_{best}\le \eta_0$ and $\eta=\eta_{best}$, the two terms in the complexity both become equal to
	\begin{align*}
		\frac{1}{\omega \eta_{best}\mu}
		= m \eta_{best} L_g
		= \sqrt{\frac{m L_g}{\omega \mu}}.
	\end{align*}
	However, if $\eta_{best} > \eta_0$, then $\eta=\min\{\eta_0, \eta_{best}\}=\eta_0$ is relatively small and the dominating term in the complexity is $\frac{1}{\omega\eta\mu}$ rather than $\eta L_g m$. Therefore, the complexity is
	\begin{align*}
		\cO\left(n + m + \frac{1}{\eta_0\mu} \right)
		= \cO\left(n + m + \frac{L}{\mu} \right).
	\end{align*}
	Combining the two complexities into one, we get the result.
\end{proof}
\subsection{Proof of Lemma~\ref{lem:svrg_saga} (SVRG and SAGA)}\label{ap:svrg_saga}
\begin{algorithm}[t]
   \caption{Stochastic Decoupling Method with SVRG.}
   \label{alg:svrg}
\begin{algorithmic}[1]
   \Require Stepsize $\eta$, initial vectors $x^0$, $u^0$, $\nabla f(u^0)$, $y_1^0, \dotsc, y_m^0$, $y^0=\avejm y_j^0$, minibatch size $\tau$
   \For{$t=0,1,\dotsc$}
	   \State Sample subset $S$ from $\{1,\dotsc, n\}$ of size $\tau$
	   \State $v^t= \frac{1}{\tau}\sum_{i\in S}\left(\nabla f_i(x^t) - \nabla f_i(u^t) + \nabla f(u^t)\right)$
	   \State $z^{t} = \proxR(x^t - \eta v^t - \eta y^t)$
	   \State $u^{t+1}= \begin{cases} x^t, &\text{with probability } \frac{\tau}{n},\\ u^t, & \text{with probability } 1 - \frac{\tau}{n} \end{cases}$
	   \State Sample $j$ from $\{1,\dotsc, m\}$ with probabilities $\{p_1, \dotsc, p_m\}$ and set $\eta_j = \frac{\eta}{mp_j}$
	   \State $x^{t+1} = \proxj\left(z^t + \eta_j y_j^t\right)$
	   \State $y_j^{t+1} = y_j^t + \frac{1}{\eta_j}(z^{t} - x^{t+1})$
	   \State $ y^{t+1} =  y^t + \frac{1}{m}(y_j^{t+1} - y_j^t)$
   \EndFor
\end{algorithmic}
\end{algorithm}
Here we consider the update rule of SVRG and SAGA with minibatch of size $\tau$. Following~\cite{hofmann2015variance}, we analyze SVRG and SAGA together by treating them both as memorization methods. More precisely, SAGA stores each gradient estimate, $\nabla f_i(u_i^t)$ individually, and SVRG stores only the reference point, $u^t$, itself and every iteration reevaluates $\nabla f_i(u^t)$ for all sampled $i$ to compute $v^t$. To avoid any confusion, we provide the explicit formulation of our method with the SVRG solver in Algorithm~\ref{alg:svrg}.

First of all, let us show that the estimate that we use, $v^t$, is unbiased.
\begin{lemma}
	Let us sample a set of indices $S$ of size $\tau$ from $\{1, \dotsc, n\}$. Then, it holds for
    \begin{align*}
	    v^t
	    \eqdef \frac{1}{\tau}\sum_{i\in S} \left(\nabla f_i(x^t) - \nabla f_i(u_i^t) + \alpha^t\right)
    \end{align*} 
    that it is unbiased
    \begin{align}
	    \EE v^t
	    = \EE\nabla f(x^t). \label{eq:svrg_saga_unbiased}
    \end{align} 
\end{lemma}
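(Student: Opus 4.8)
The plan is to establish unbiasedness by first conditioning on the iteration history and then invoking the tower property. Let $\cF^t$ denote the $\sigma$-algebra generated by all randomness up to and including the construction of $x^t$, so that $x^t$, the memory points $u_1^t,\dots,u_n^t$, and the stored average $\alpha^t = \avein \nabla f_i(u_i^t)$ are all $\cF^t$-measurable. The only fresh randomness entering $v^t$ is the minibatch $S\subseteq\{1,\dots,n\}$ of size $\tau$, sampled uniformly at random and independently of $\cF^t$. Hence it suffices to show $\EE[v^t\mid \cF^t] = \nabla f(x^t)$; the stated identity \eqref{eq:svrg_saga_unbiased} then follows by taking total expectation.

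The key computation is elementary. For uniform sampling without replacement, each fixed index satisfies $\Pb(i\in S) = \tau/n$, so for any $\cF^t$-measurable vectors $h_1,\dots,h_n$ linearity of expectation gives $\EE\big[\sum_{i\in S} h_i \mid \cF^t\big] = \sumin \Pb(i\in S)\,h_i = \tfrac{\tau}{n}\sumin h_i$. Applying this with $h_i = \nabla f_i(x^t) - \nabla f_i(u_i^t)$ yields
\begin{align*}
\EE[v^t\mid \cF^t]
&= \frac{1}{\tau}\,\EE\Big[\sum_{i\in S}\big(\nabla f_i(x^t) - \nabla f_i(u_i^t)\big)\,\Big|\,\cF^t\Big] + \alpha^t \\
&= \frac{1}{\tau}\cdot\frac{\tau}{n}\sumin\big(\nabla f_i(x^t) - \nabla f_i(u_i^t)\big) + \alpha^t
= \avein \nabla f_i(x^t) - \avein \nabla f_i(u_i^t) + \alpha^t.
\end{align*}
Since $\alpha^t = \avein \nabla f_i(u_i^t)$ by the definition of the memorized average used by both SVRG and SAGA, the last two terms cancel, leaving $\EE[v^t\mid\cF^t] = \avein\nabla f_i(x^t) = \nabla f(x^t)$; the tower property then gives $\EE v^t = \EE\big[\EE[v^t\mid\cF^t]\big] = \EE\nabla f(x^t)$.

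There is essentially no obstacle in this argument: the whole content is (i) the inclusion probability $\Pb(i\in S)=\tau/n$, (ii) the independence of $S$ from $\cF^t$, which lets the conditional expectation over $S$ be evaluated purely through these probabilities, and (iii) the fact that $\alpha^t$ is exactly the average of the currently stored gradients, which is what makes the SVRG/SAGA control variate conditionally mean-zero. The only point worth stating carefully is (iii), since it is the sole place where the specific structure of the estimator — as opposed to a generic importance-weighted subsample — is used.
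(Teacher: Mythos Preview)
Your proof is correct and follows essentially the same approach as the paper's: both use the inclusion probability $\Pb(i\in S)=\tau/n$ to compute the expectation of the minibatch sum and then invoke the identity $\alpha^t=\avein\nabla f_i(u_i^t)$ for cancellation. Your version is slightly more careful in making the conditioning on $\cF^t$ and the tower property explicit, but the content is identical.
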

\begin{proof}
	Clearly, since $i$ is sampled with probability $\frac{\tau}{n}$, it holds
	\begin{align*}
		\frac{1}{\tau}\EE \left[\nabla f_i(x^t) - \nabla f_i(u_i^t)\right]
		&= \EE\left[\frac{1}{n}\sumkn (\nabla f_k(x^t) - \nabla f_k(u_k^t)) \right]\\
		&= \EE\left[\nabla f(x^t) - \avekn \nabla f_k(u_k^t) \right].
	\end{align*}
	Therefore, $\EE v^t  = \EE \nabla f(x^t)$. 
\end{proof}

We continue our analysis with the following lemma.
\begin{lemma}
	Consider SVRG and SAGA solver for $f$. Assume that every $f_i$ is convex and $L$-smooth and define
	\begin{align}
		\cM^t
		\eqdef \frac{3\eta^2}{\tau}\sumin \EE\|\nabla f_i(u_i^t) - \nabla f_i(x^*)\|^2,\label{eq:mt_svrg_saga}
	\end{align}
	where for SVRG $u_1^t=\dotsb=u_n^t$ is the reference point at moment $t$ and for SAGA it is the point whose gradient is saved in memory for function $f_i$. Then,
	\begin{align*}
		\cM^{t+1}
		\le \left(1 - \frac{\tau }{n}\right) \cM^t + 6\eta^2L \EE D_f(x^t, x^*).
	\end{align*}
\end{lemma}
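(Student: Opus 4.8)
The plan is to write a one-step recursion for the memory term $\cM^t$ and then convert the extra term it produces into a Bregman divergence via smoothness. Condition on all randomness generated before iteration $t$, and let $\EE_t$ denote expectation over the minibatch $S\subseteq\{1,\dots,n\}$ of size $\tau$ sampled at step $t$. In both methods the memory point attached to coordinate $i$ is refreshed to $x^t$ with probability exactly $\tfrac{\tau}{n}$ and is left unchanged otherwise: in SAGA this happens precisely when $i\in S$, and $\Pr(i\in S)=\tfrac{\tau}{n}$; in the (loopless) SVRG interpretation the whole reference point is replaced by $x^t$ with probability $\tfrac{\tau}{n}$, which again gives $\Pr(u_i^{t+1}=x^t)=\tfrac{\tau}{n}$ for every $i$. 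Consequently, for each $i$,
\[
\EE_t\bigl\|\nabla f_i(u_i^{t+1}) - \nabla f_i(x^*)\bigr\|^2
= \frac{\tau}{n}\bigl\|\nabla f_i(x^t) - \nabla f_i(x^*)\bigr\|^2
+ \Bigl(1 - \frac{\tau}{n}\Bigr)\bigl\|\nabla f_i(u_i^t) - \nabla f_i(x^*)\bigr\|^2 ;
\]
only the marginal probability of the event $\{i\in S\}$ enters, so the dependence across coordinates in the SAGA case is immaterial.

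Summing this over $i=1,\dots,n$, multiplying by $\tfrac{3\eta^2}{\tau}$, and taking total expectation, the coefficient multiplying the new term is $\tfrac{3\eta^2}{\tau}\cdot\tfrac{\tau}{n}=\tfrac{3\eta^2}{n}$, so the definition~\eqref{eq:mt_svrg_saga} of $\cM^t$ immediately yields
\[
\cM^{t+1}
= \Bigl(1 - \frac{\tau}{n}\Bigr)\cM^t + \frac{3\eta^2}{n}\sumin \EE\bigl\|\nabla f_i(x^t) - \nabla f_i(x^*)\bigr\|^2 .
\]
It remains to bound the last sum. Applying the smoothness inequality~\eqref{eq:grad_dif_bregman} to each $f_i$ gives $\|\nabla f_i(x^t) - \nabla f_i(x^*)\|^2 \le 2L\, D_{f_i}(x^t, x^*)$, and since $f=\tfrac{1}{n}\sumin f_i$ the Bregman divergences are linear in the generating function, so $\tfrac{1}{n}\sumin D_{f_i}(x^t,x^*) = D_f(x^t,x^*)$; hence $\tfrac{1}{n}\sumin\EE\|\nabla f_i(x^t) - \nabla f_i(x^*)\|^2 \le 2L\,\EE D_f(x^t,x^*)$. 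Substituting,
\[
\cM^{t+1}
\le \Bigl(1 - \frac{\tau}{n}\Bigr)\cM^t + 6\eta^2 L\, \EE D_f(x^t, x^*),
\]
which is the claim.

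There is no genuine difficulty in this argument; the only point deserving care is the uniform treatment of SAGA and SVRG — verifying that in both cases each coordinate's memory is refreshed with probability exactly $\tfrac{\tau}{n}$, so that the single one-step recursion above is valid. Everything else is the elementary expansion of $\EE_t\|\nabla f_i(u_i^{t+1})-\nabla f_i(x^*)\|^2$ followed by one invocation of smoothness.
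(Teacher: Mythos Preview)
Your proof is correct and follows essentially the same route as the paper's own argument: compute the one-step expectation of the memory term by noting that each coordinate is refreshed with probability $\tfrac{\tau}{n}$ (treating SAGA and loopless SVRG uniformly), then invoke the smoothness inequality~\eqref{eq:grad_dif_bregman} componentwise and average to obtain $2L\,D_f(x^t,x^*)$. The only cosmetic difference is that you write the per-coordinate identity first and then sum, whereas the paper writes the summed identity directly for each method; the content is identical.
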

\begin{proof}
	We have for SVRG that $\cM^{t+1}$ changes with probability $\frac{\tau}{n}$ and with probability $1 - \frac{\tau}{n}$ it remains the same. Then,
	\begin{align*}
		\EE \sumkn \|\nabla f_k(u_k^{t+1}) - \nabla f_k(x^*)\|^2
		&= \frac{\tau}{n} \sumkn \EE\|\nabla f_k(x^{t}) - \nabla f_k(x^*)\|^2 + \left(1  - \frac{\tau}{n}\right)\cM^t.
	\end{align*}
	Similarly, for SAGA we update exactly $\tau$ out of $n$ gradient in the memory, which leads to the following identity:
    \begin{align*}
        &\EE \sumkn \|\nabla f_k(u_k^{t+1}) - \nabla f_k(x^*)\|^2 \\
        &\qquad = \EE \sum_{i\in S}\|\nabla f_i(u_i^{t+1}) - \nabla f_i(x^*)\|^2 + \EE \sum_{i\not \in S} \|\nabla f_i(u_i^{t+1}) - \nabla f_i(x^*)\|^2\\
        &\qquad = \frac{\tau}{n}\sumkn\EE\|\nabla f_k(x^t) - \nabla f_k(x^*)\|^2 + \left(1  - \frac{\tau}{n}\right)\cM^t.
    \end{align*}
    In both cases, we obtained the same recursion. Now let us bound the gradient difference in the identity above:
    \begin{align*}
    		\frac{1}{n}\sumkn\|\nabla f_k(x^t) - \nabla f_k(x^*)\|^2
    		\overset{\eqref{eq:grad_dif_bregman}}{\le} \frac{1}{n}\sumkn 2L D_{f_k}(x^t, x^*) 
    		= 2L D_f(x^t, x^*).
    \end{align*}
    This gives us the claimed inequality.
\end{proof}
Now let us show how the recursion looks like when $\cM^{t+1}$ is combined with $\|w^{t} - w^*\|^2$.
\begin{lemma}\label{lem:saga}
    Consider the iterates of Algorithm~\ref{alg:sdm} with SVRG or SAGA estimate $v^t$. Let $f_1, \dotsc, f_n$ be convex and $L$-smooth, $f$ be $\mu$-strongly convex, $S$ be a subset of $\{1,\dotsc, n\}$ of size $\tau$ sampled with equal probabilities, $\alpha^t = \avein \nabla f_i(u_i^t)$ and $w^t = x^t - \eta v^t$ with 
    \begin{align*}
	    v^t
	    =\frac{1}{\tau} \sum_{i\in S} \left(\nabla f_i(x^t) - \nabla f_i(u_i^t) + \alpha^t\right)
    \end{align*} 
    then we have
    \begin{align*}
        \EE \left[\|w^{t} - w^*\|^2 + \cM^{t+1} \right]
        \le (1 - \rho)\EE\left[\|x^t - x^*\|^2 + \cM^t\right],
    \end{align*}
    where $w^* \eqdef x^* - \eta \nabla f(x^*)$ and  $\rho\eqdef \min \left\{\eta \mu, \frac{\tau}{3n}\right\}$.
\end{lemma}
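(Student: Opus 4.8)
The plan is to run the standard variance-reduced analysis on the gradient step and then absorb the recursion for $\cM^{t+1}$ proved just above. First I would expand the gradient step, using that $v^t$ is conditionally unbiased, $\EE_S[v^t\mid\cF^t]=\nabla f(x^t)$ (cf.~\eqref{eq:svrg_saga_unbiased}), together with $w^*=x^*-\eta\nabla f(x^*)$, to obtain
\begin{align*}
	\EE\|w^t-w^*\|^2
	= \EE\|x^t-x^*\|^2 - 2\eta\,\EE\<\nabla f(x^t)-\nabla f(x^*), x^t-x^*> + \eta^2\,\EE\|v^t-\nabla f(x^*)\|^2.
\end{align*}

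The heart of the argument is the bound on the second-moment term. I would split $v^t-\nabla f(x^*)=A+B$ with
\[
	A\eqdef \frac1\tau\sum_{i\in S}\bigl(\nabla f_i(x^t)-\nabla f_i(x^*)\bigr),\qquad
	B\eqdef \frac1n\sumin\bigl(\nabla f_i(u_i^t)-\nabla f_i(x^*)\bigr) - \frac1\tau\sum_{i\in S}\bigl(\nabla f_i(u_i^t)-\nabla f_i(x^*)\bigr),
\]
and use $\|A+B\|^2\le 2\|A\|^2+2\|B\|^2$. For $A$, Jensen's inequality over the minibatch followed by \eqref{eq:grad_dif_bregman} gives $\EE\|A\|^2\le\frac1n\sumin\EE\|\nabla f_i(x^t)-\nabla f_i(x^*)\|^2\le 2L\,\EE D_f(x^t,x^*)$. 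For $B$, the decisive observation is that $B=\EE_S[Z\mid\cF^t]-Z$ with $Z\eqdef\frac1\tau\sum_{i\in S}(\nabla f_i(u_i^t)-\nabla f_i(x^*))$, so $B$ is conditionally centered and hence $\EE\|B\|^2\le\EE\|Z\|^2\le\frac1n\sumin\EE\|\nabla f_i(u_i^t)-\nabla f_i(x^*)\|^2=\frac{\tau}{3n\eta^2}\cM^t$ by the definition~\eqref{eq:mt_svrg_saga} of $\cM^t$. Combining the two pieces, $\eta^2\,\EE\|v^t-\nabla f(x^*)\|^2\le 4\eta^2 L\,\EE D_f(x^t,x^*)+\tfrac{2\tau}{3n}\cM^t$.

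I would then add the recursion $\cM^{t+1}\le(1-\tfrac\tau n)\cM^t+6\eta^2 L\,\EE D_f(x^t,x^*)$ established above and apply strong convexity via \eqref{eq:scal_prod_cvx}, namely $-2\eta\<\nabla f(x^t)-\nabla f(x^*),x^t-x^*>\le-\eta\mu\|x^t-x^*\|^2-2\eta D_f(x^t,x^*)$. The $\cM^t$-coefficient becomes $(1-\tfrac\tau n)+\tfrac{2\tau}{3n}=1-\tfrac{\tau}{3n}$, while the coefficient of $\EE D_f(x^t,x^*)$ is $-2\eta+4\eta^2 L+6\eta^2 L=-2\eta(1-5\eta L)\le0$ since $\eta\le\frac1{5L}$, so that term may be dropped. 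This leaves
\begin{align*}
	\EE\bigl[\|w^t-w^*\|^2+\cM^{t+1}\bigr]
	\le (1-\eta\mu)\,\EE\|x^t-x^*\|^2 + \Bigl(1-\tfrac{\tau}{3n}\Bigr)\cM^t
	\le (1-\rho)\bigl(\EE\|x^t-x^*\|^2+\cM^t\bigr),
\end{align*}
where the last inequality uses $\tfrac{\tau}{3n}\ge\tfrac1{3n}$, hence $\min\{\eta\mu,\tfrac{\tau}{3n}\}\ge\rho=\min\{\eta\mu,\tfrac1{3n}\}$, which is exactly the claim.

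The only genuinely delicate point is the bound on $\EE\|B\|^2$: one must use that $B$ has conditional mean zero, so its second moment is a variance bounded by $\EE\|Z\|^2$ alone; retaining the subtracted mean $\bigl\|\frac1n\sumin(\nabla f_i(u_i^t)-\nabla f_i(x^*))\bigr\|^2$ would destroy the contraction. Everything else is bookkeeping, and the two constants — the factor $3$ in the definition of $\cM^t$ and the threshold $\frac1{5L}$ on $\eta$ — are precisely what makes the $\cM^t$- and $D_f$-coefficients above work out to $\le 1-\rho$ and $\le 0$ respectively.
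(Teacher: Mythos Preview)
Your proof is correct and follows essentially the same route as the paper: expand $\|w^t-w^*\|^2$, bound the second moment of $v^t-\nabla f(x^*)$ via a Young/Jensen split together with the variance identity $\EE\|X-\EE X\|^2\le\EE\|X\|^2$, add the $\cM^{t+1}$ recursion, and verify the $D_f$- and $\cM^t$-coefficients come out as $-2\eta(1-5\eta L)$ and $1-\tfrac{\tau}{3n}$. The only cosmetic difference is that the paper applies Jensen over the minibatch first and then Young's term-by-term, whereas you apply Young's at the vector level (the $A,B$ split) and then Jensen; both orderings yield identical constants.
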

\begin{proof}
    It holds
    \begin{align*}
        \EE \|w^{t} - w^*\|^2
        &= \EE \|x^t - x^* - \eta(v^t - \nabla f(x^*))\|^2 \\
        &= \EE\left[\|x^t - x^*\|^2 - 2\eta\<x^t - x^*, \EE [v^t\mid x^t] - \nabla f(x^*)> + \eta^2 \|v^t - \nabla f(x^*)\|^2 \right]\\
        &\overset{\eqref{eq:svrg_saga_unbiased}}{=} \EE\left[\|x^t - x^*\|^2 - 2\eta\<x^t - x^*, \nabla f(x^t) - \nabla f(x^*)> + \eta^2 \|v^t - \nabla f(x^*)\|^2 \right].\\
        &\overset{\eqref{eq:scal_prod_cvx}}{\le}
         \EE\left[(1 - \eta \mu)\|x^t - x^*\|^2 - 2\eta D_f(x^t, x^*) + \eta^2 \|v^t - \nabla f(x^*)\|^2 \right].
    \end{align*}
    On the other hand, by Jensen's and Young's inequalities
    \begin{align*}
        \EE \|v^t - \nabla f(x^*)\|^2 
        &= \EE\left\|\frac{1}{\tau}\sum_{i\in S}(\nabla f_i(x^t) - \nabla f_i(u_i^t)) + \alpha^t - \nabla f(x^*)) \right\|^2\\
        &\le \frac{1}{\tau}\EE\sum_{i\in S}\left\|\nabla f_i(x^t) - \nabla f_i(x^*) + \nabla f_i(x^*)- \nabla f_i(u_i^t) + \alpha^t - \nabla f(x^*) \right\|^2\\
        &\le \frac{2}{\tau}\EE\sum_{i\in S}\left\|\nabla f_i(x^t) - \nabla f_i(x^*)\right\|^2 + \frac{2}{\tau}\EE\sum_{i\in S}\|\nabla f_i(x^*) - \alpha_i^t + \alpha^t - \nabla f(x^*)\|^2 \\
        &\overset{\eqref{eq:grad_dif_bregman}}{\le} 4L\EE D_f(x^t, x^*) + \frac{2}{n}\sumkn\EE\left\|\nabla f_k(x^*) - \alpha_k^t + \alpha^t - \nabla f(x^*) \right\|^2.
    \end{align*}
    Using inequality $\EE\|X - \EE X\|^2 \le \EE \|X\|^2$ that holds for any random variable $X$, the second term can be simplified to
    \begin{align*}
        &\frac{2}{n}\sumkn\EE\left\|\nabla f_k(x^*) - \alpha_k^t + \alpha^t - \nabla f(x^*) \right\|^2 \\
        &\le \frac{2}{n}\sumkn\EE\|\nabla f_k(u_k^t) - \nabla f_k(x^*)\|^2 \\
        &= \frac{2\tau }{3n}\cM^t.
    \end{align*}
    Thus,
    \begin{align}
        \EE \left[\|w^{t} - w^*\|^2 + \cM^{t+1} \right]
        &\le (1 - \eta \mu)\EE\|x^t - x^*\|^2 - 2\eta\left(1 - 2\eta L - 3\eta  L\right)\EE D_f(x^t, x^*)\notag \\
        &\quad + \left(\left(1 - \frac{\tau}{n}\right) + \frac{2\tau }{3n}\right) \cM^t. \label{eq:saga_precise}
    \end{align}
    The second term in the right-hand side can be dropped as $1 - 2\eta L - \frac{c L}{n\eta}= 1 - 2\eta L - 3\eta L\le 0$. In addition, $\rho\le \eta\mu$ and $\rho \le \frac{\tau}{3n}$, so the claim follows.
\end{proof}
Now we are ready to prove Lemma~\ref{lem:svrg_saga}.
\begin{customlem}{\ref{lem:svrg_saga}}
	In SVRG and SAGA, if $f_i$ is $L$-smooth and convex for all $i$, Assumption~\ref{as:method} is satisfied with $\eta_0 = \frac{1}{6L}$, $\omega = \frac{1}{3}$, $\rho = \frac{1}{3n}$ and 
	\begin{align*}
		\cM^t 
		= \frac{3\eta^2}{n}\sumin \EE\|\nabla f_i(u_i^t) - \nabla f_i(x^*)\|^2,
	\end{align*}
	where in SVRG $u_i^t=u^t$ is the reference point of the current loop, and in SAGA $u_i^t$ is the point whose gradient is stored in memory for function $f_i$. If $f$ is also strongly convex, then Assumption~\ref{as:method} holds with $\eta_0 = \frac{1}{5L}$, $\omega = 1$, $\rho = \frac{1}{3n}$ and the same $\cM^t$.
\end{customlem}
\begin{proof}
	Equation~\eqref{eq:saga_precise} gives immediately the second part of the claim. 
	
	Similarly, if $\eta\le \frac{1}{6L}$, from Equation~\ref{eq:saga_precise} we obtain by mentioning $1 - 2\eta L - \frac{c L}{n\eta} = 1 - 5\eta L \le \frac{1}{6}$ that
	\begin{align*}
        \EE \left[\|w^{t} - w^*\|^2 + \frac{c}{n}\sumkn \|\alpha_k^{t+1} - \alpha_k^*\|^2 \right]
        &\le \EE\left[\|x^t - x^*\|^2 - \frac{\eta}{3}D_f(x^t, x^*) + \frac{c}{n}\sumkn \|\alpha_i^t - \alpha_i^*\|^2\right].
    \end{align*}
\end{proof}
\subsection{Proof of Lemma~\ref{lem:sgd} (SGD)}\label{ap:sgd}
\begin{customlem}{\ref{lem:sgd}}
	Assume that at an optimum $x^*$ the variance of stochastic gradients is finite, i.e., $\sigma_*^2\eqdef \EE \|\nabla f_{\xi^t}(x^*)  - \nabla f(x^*)\|^2 < +\infty$. Then, SGD that terminates after at most $t_0$ iterations satisfies Assumption~\ref{as:method}(a) with $\eta_0=\frac{1}{4L}$, $\omega=1$ and $\rho=0$. If $f$ is strongly convex, it satisfies Assumption~\ref{as:method}(b) with $\eta_0 = \frac{1}{2L}$, $\omega=1$ and $\rho=0$. In both cases, sequence $\{\cM^t\}_{t=0}^{t_0}$ is given by
	\begin{align*}
		\cM^t = 2\eta^2(t_0 - t)\sigma_*^2.
	\end{align*}
\end{customlem}
\begin{proof}
	Clearly, we have
	\begin{align*}
		\EE \|w^t - w^*\|^2
		&= \EE\left[\|x^t - x^*\|^2 - 2\eta\<\nabla f(x^t) - \nabla f(x^*), x^t - x^*> + \eta^2 \|\nabla f_{\xi^t}(x^t)  - \nabla f(x^*)\|^2\right] \\
		&\overset{\eqref{eq:sgd_variance}}{\le} \EE\left[\|x^t - x^*\|^2 - 2\eta\<\nabla f(x^t) - \nabla f(x^*), x^t - x^*> + \eta^2\left(4L D_f(x^t, x^*) + 2\sigma_*^2\right)\right] \\
		&\overset{\eqref{eq:scal_prod_cvx}}{\le}\EE\left[(1 - \eta\mu)\|x^t - x^*\|^2 - 2\eta(1 - 2\eta L)D_f(x^t, x^*)\right] +2\eta^2 \sigma_*^2 .
	\end{align*}
	If $f$ is not strongly convex, then $\mu=0$ and by assuming $\eta \le \eta_0 = \frac{1}{4L}$ we get $1 - 2\eta L\ge \frac{1}{2}$ and
	\begin{align*}
		\EE\|w^t - w^*\|^2
		\le \EE\left[\|x^t - x^*\|^2 - \eta D_f(x^t, x^*)\right] + 2\eta^2 \sigma_*^2.
	\end{align*}
	In case $\mu=0$, by defining $\{\cM^{t}\}_{t=0}^{t_0}$ with recursion 
	\begin{align*}
		\cM^{t+1}
		= \cM^t - 2\eta^2\sigma_*^2,
	\end{align*}
	we can verify Assumption~\ref{as:method}(a) as long as $\cM^{t_0} = \cM^0 - 2t_0 \eta^2\sigma_*^2\ge 0$. This is the reason we choose $\cM^0=2t_0 \eta^2\sigma_*^2$.
	
	On the other hand, when $\mu>0$ and $\sigma_*=0$, it follows from $\eta\le \eta_0=\frac{1}{2L}$ that 
	\begin{align*}
		\EE \|w^t - w^*\|^2
		\le (1 - \eta\mu)\EE\|x^t - x^*\|^2.
	\end{align*}
\end{proof}

\clearpage

\section{Additional Experiments}\label{sec:add_experiments}
Here we want to see how changing $m$ and $n$ affects the comparison between SVRG with exact projection and decoupled SVRG with one stochastic projection. The problem that we consider is again $\ell_2$-regularized constrained linear regression. We took Gisette dataset from LIBSVM, whose dimension is $d=5000$, and used its first 1000 observations to construct $f$ and $g$. In particular, we split these observations into soft loss $f_i(x)=\frac{1}{2}\|a_i^\top x - b_i\|^2$ and hard constraints $g_j(x) = \ind_{\{x:a_j^\top x = b_j\}}$ with $n+m=1000$ and we considered three choices of $n$: 250, 500 and 750. To make sure that the constraints can be satisfied, we generated a random vector $x_0$ from normal distribution $\cN(0, \nicefrac{1}{\sqrt{d}})$ and set $b=\mA x_0$. In all cases, first part of data was used in $f$ and the rest in $g$. To better see the effect of changing $n$, we used fixed $\ell_2$ penalty of order $\nicefrac{1}{(n+m)}$ for all choices of $n$.

Computing the projection of a point onto the intersection of all constraints as at least as expensive as $m$ individual projections and we count it as such for SVRG. In practice it might be by orders of magnitude slower than this estimate for big matrices, but the advantage of our method can be seen even without taking it into account. On the other hand, to make the comparison fair in terms of computation trade-off, we use SVRG with minibatch 20 and our method with minibatch 1. The stepsize for both methods is $\nicefrac{1}{(2L)}$.
\begin{figure}[t]
     \centering
     \begin{subfigure}[t]{0.32\textwidth}
         \centering
         \includegraphics[width=1\textwidth]{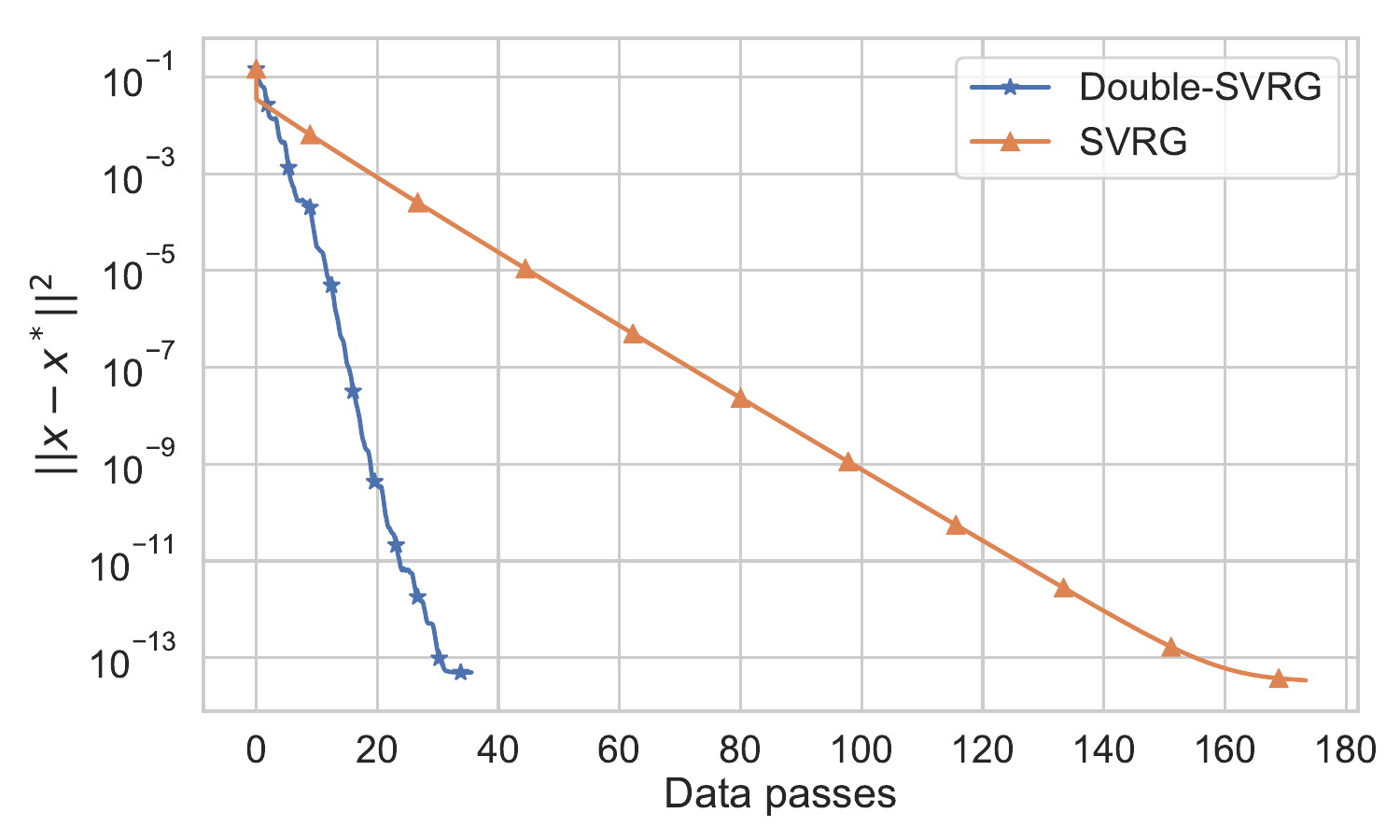}
         \caption{$m=100$, $n=900$}
     \end{subfigure}
     \begin{subfigure}[t]{0.32\textwidth}
         \centering
         \includegraphics[width=1\textwidth]{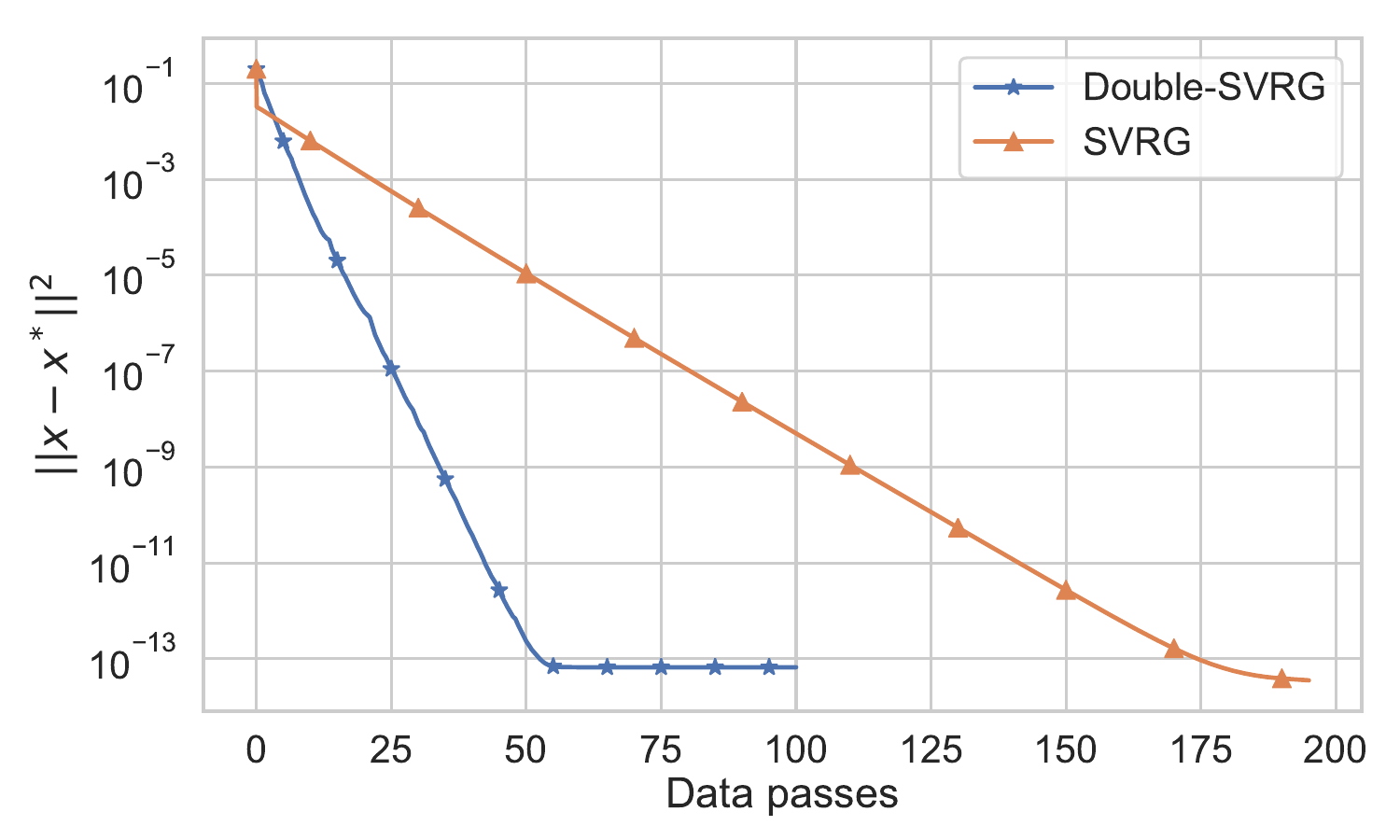}
         \caption{$m=200$, $n=800$}
     \end{subfigure}
     \begin{subfigure}[t]{0.32\textwidth}
         \centering
         \includegraphics[width=1\textwidth]{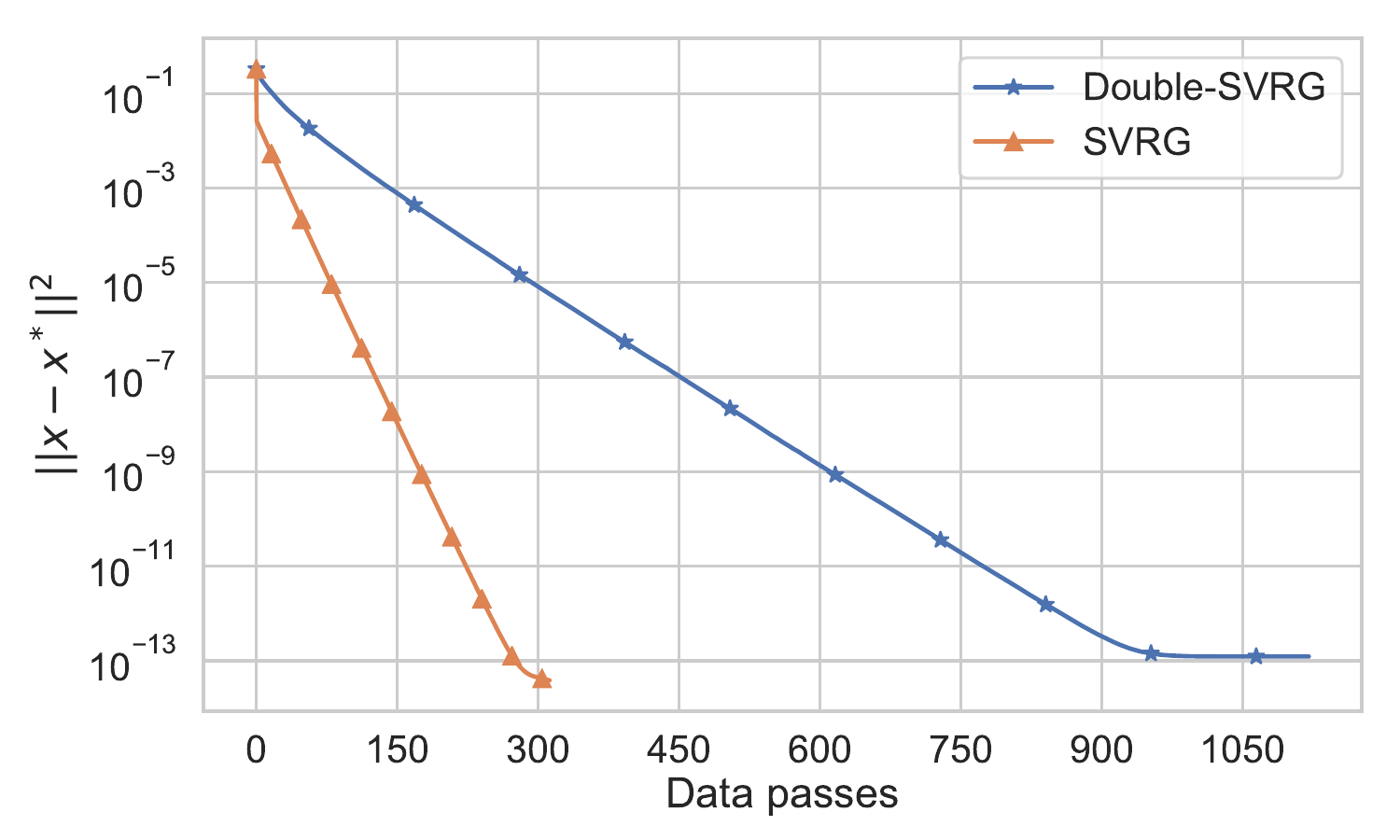}
         \caption{$m=500$, $n=500$}
     \end{subfigure}
     \\
     \begin{subfigure}[t]{0.32\textwidth}
         \centering
         \includegraphics[width=1\textwidth]{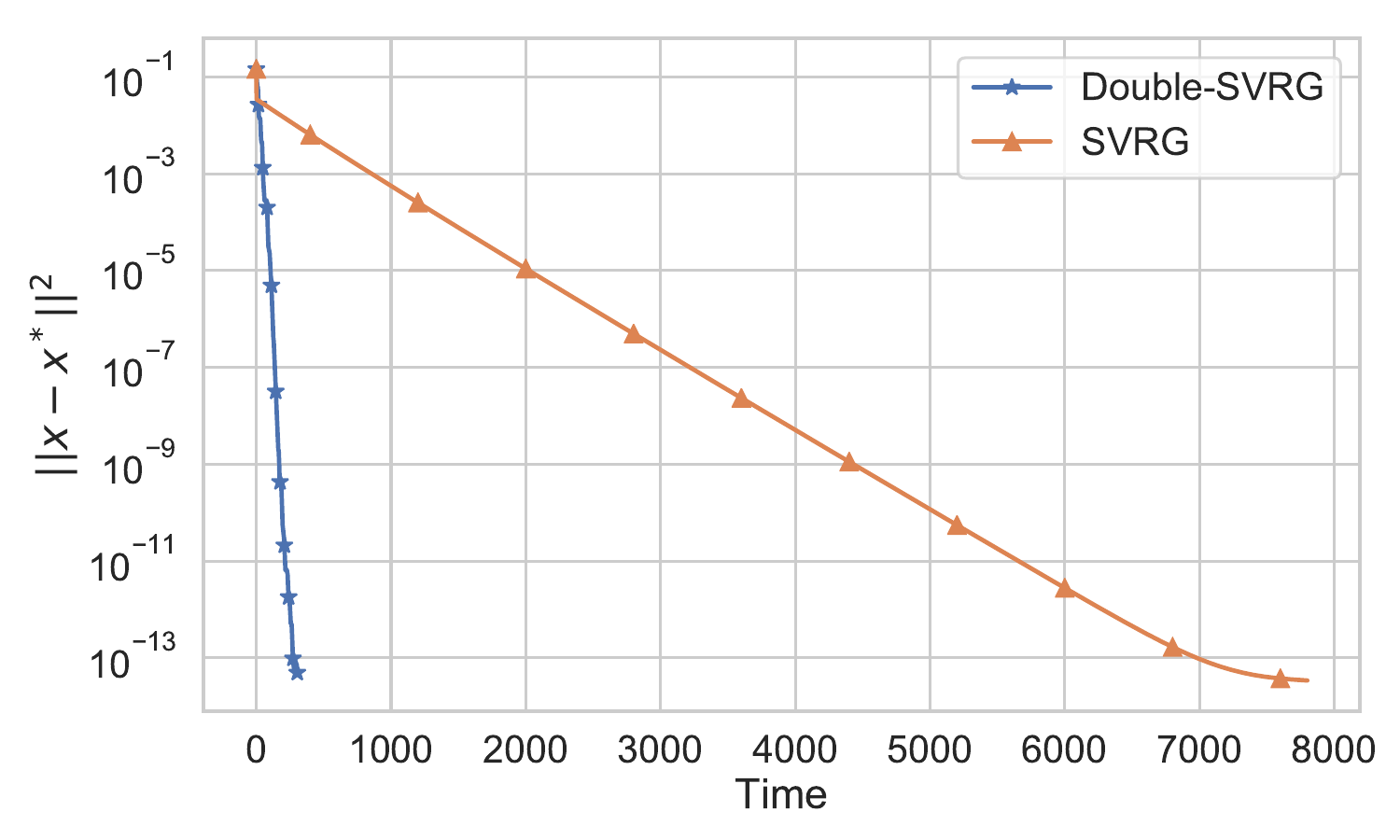}
         \caption{$m=100$, $n=900$}
     \end{subfigure}
     \begin{subfigure}[t]{0.32\textwidth}
         \centering
         \includegraphics[width=1\textwidth]{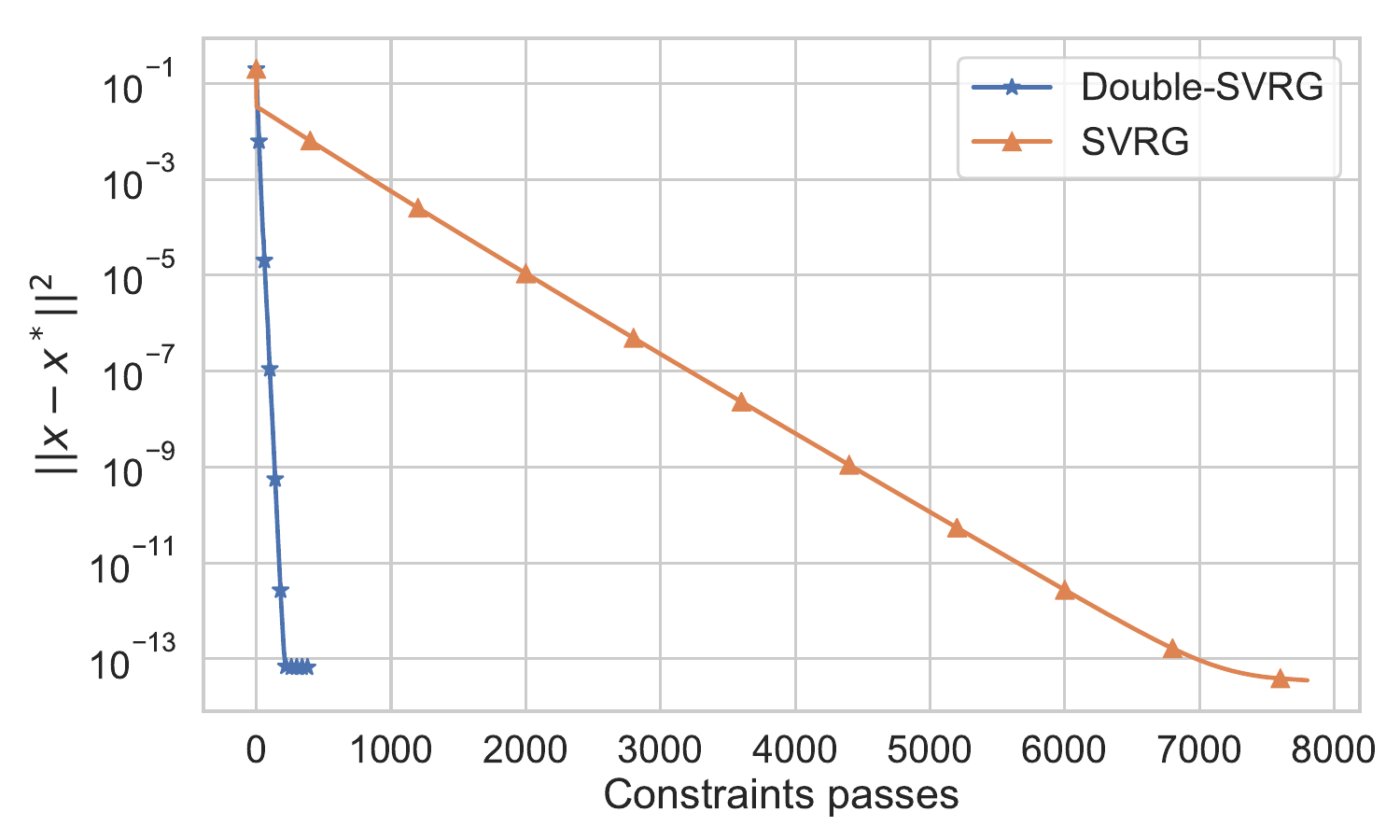}
         \caption{$m=200$, $n=800$}
     \end{subfigure}
     \begin{subfigure}[t]{0.32\textwidth}
         \centering
         \includegraphics[width=1\textwidth]{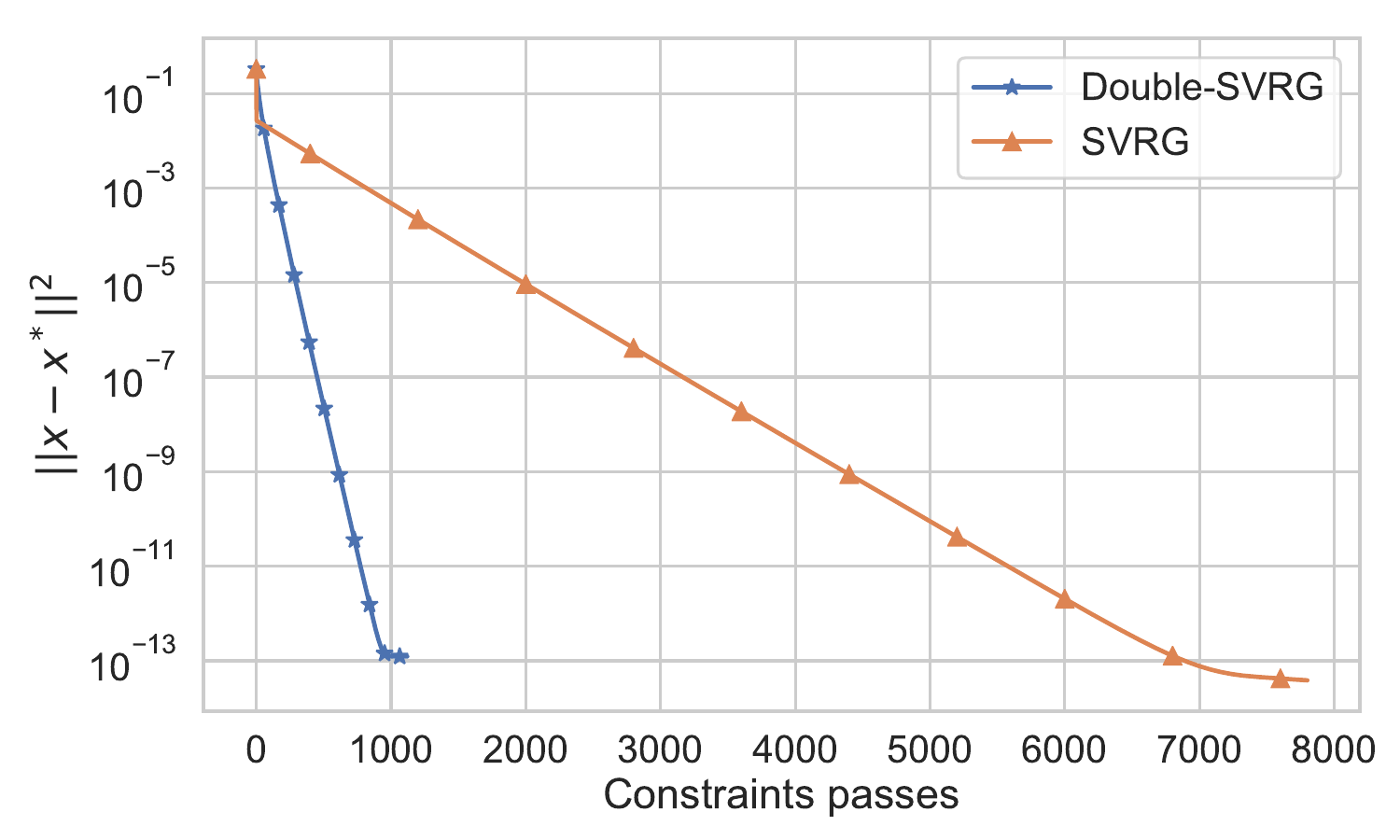}
         \caption{$m=500$, $n=500$}
     \end{subfigure}
     \caption{Comparison of SVRG with precise projection onto all constraints (labeled as 'SVRG') to our stochastic version of SVRG (labeled as 'Double-SVRG').}
     \label{fig:dif_m_and_n}
\end{figure}

As we can from Figure~\ref{fig:dif_m_and_n}, the trade-off between projections and gradients changes dramatically when $m$ increases. When $m=100$, which implies that the term corresponding to $\mA$ in the complexity is small, the difference is tremendous, partially because minibatching for SVRG improves only part of its complexity~\cite{gower2018stochastic}. In the setting $m=n=500$, we see that the number of data passes  taken by our method to solve the problem is a few times bigger than than that taken by SVRG. Clearly, this happens because the term related to $\mA$ becomes dominating in the complexity and SVRG uses $m=500$ times more constraints at each iteration than our method.

\clearpage

\section{Table of Key Notation}

\begin{table}[!h]
\caption{Key notation used in this paper.}
\centering
\begin{tabular}{|c|c|}
\hline
Objective function $F$ & $F \eqdef f + g + R$ \\
Domain of $F$ & $\dom F \eqdef \{x\;:\; F(x)<+\infty\} \neq \emptyset$ \\
Primal variable & $x\in \RR^d$ \\
Set of optimal solutions & $\cX^* \eqdef \{x^*\in \RR^d \;:\; F(x) \geq F(x^*) \; \forall x\in \RR^d\}$ (non-empty) \\
$f$ in finite-sum form & $f(x) = \frac{1}{n}\sum \limits_{i=1}^n f_i(x)$  ($f_i$ are differentiable and convex) \\
$f$ in expectation form & $f(x) = \EE_{\xi} f_\xi(x)$  ($f_\xi$ are differentiable and convex)\\
Standard Euclidean norm & $\|x\|\eqdef (\sum_{l=1}^d x_l^2 )^{1/2}$ \\
Gradient noise at optimum & $\sigma_*^2\eqdef \EE_\xi \|\nabla f_\xi(x^*) - \nabla f(x^*)\|^2$ \\
Smoothness constant of $f$ & $L$ \\
Strong convexity constant of $f$ & $\mu$ \\
Function $g_j$ &  $g_j:\RR^d\to \RR\cup \{+\infty\}$ (proper, closed, convex, proximable)\\
Function $g_j$ in a structured form & $g_j(x) = \phi_j(\mA_j^\top x)$, $\mA_j\in \RR^{d\times d_j}$ \\
Function $\phi_j$ & $\phi_j:\RR^{d_j}\to \RR\cup \{+\infty\}$ (proper, closed, convex, proximable)\\
Function $g$ & $g(x) \eqdef \frac{1}{m}\sum \limits_{j=1}^m g_j(x)$  (proper, closed, convex)\\
Function $R$ & $R(x):\RR^d \to \RR\cup \{+\infty\}$  (proper, closed, convex, proximable) \\
\hline
Primal iterates & $x^t \in \RR^d$ \\
Estimator of $\nabla f(x^t)$ & $v^t$ \\
Dual variables & $y_1^t,\dots,y_j^t \in \RR^d$ \\
Probability of selecting index $j$ & $p_j$ \\
Stepsize associated with $f$ and $R$ & $\eta$ \\
Stepsize associated with $g_j$ & $\eta_j = \frac{\eta}{m p_j}$ \\
Lyapunov function &   $\cL^t
        \eqdef  \EE\|x^t - x^*\|^2 + \cM^t + \cY^t$ \\
$\cY^t$ & $ \cY^t
	\eqdef (1+\gamma)\sumkm \eta_j^2\EE\|y_j^{t} - y_j^*\|^2$\\      
Smoothness constant of $g_j$ &  $L_j\in\RR\cup \{+\infty\}$ ($L_j=+\infty$ if $g_j$ is non-smooth)\\
Parameter $\gamma$	& $\gamma \eqdef \min_{j=1,\dots,m} \frac{1}{\eta_j L_j}$ ($\gamma=0$ if any $g_j$ is non-smooth)  \\
\hline
Subdifferential of $R$ & $\partial R(x)\eqdef \{s \;:\; R(y) \geq R(x) + \langle s, y-x \rangle\}$, $x\in \dom R$ \\
Proximal operator of function $R$ & $\prox_{\eta R}(x) \eqdef \argmin_{u} \left\{R(u) + \frac{1}{2\eta}\|u - x\|^2 \right\}$ \\ 
Proximal operator of function $g_j$ & $\prox_{\eta_j g_j}(x) \eqdef \argmin_{u} \left\{g_j(u) + \frac{1}{2\eta_j}\|u - x\|^2 \right\}$ \\ 
Characteristic function of a set $\cC$ & $\ind_{\cC}(x)\eqdef \begin{cases}0 & x\in \cC \\ +\infty & x\notin \cC\end{cases}$ \\
Projection onto set $\cC$ & $\Pi_{\cC}(x) \eqdef  \prox_{\ind_{\cC}}(x)= \argmin_{u\in \cC} \|u-x\|$ \\
Bregman divergence of $f$ & $D_f(x, y) \eqdef f(x) - f(y) - \<\nabla f(x), x - y> $ \\
\hline
\end{tabular}
\label{tbl:notation}
\end{table}

\end{document}